\newcommand{\CA}{{\mathcal {A}}}
\newcommand{\CB}{{\mathcal {B}}}
\newcommand{\CE}{{\mathcal {E}}}
\newcommand{\CF}{{\mathcal {F}}}
\newcommand{\CG}{{\mathcal {G}}}
\newcommand{\CH}{{\mathcal {H}}}
\newcommand{\CI}{{\mathcal {I}}}
\newcommand{\CL}{{\mathcal {L}}}
\newcommand{\CN}{{\mathcal {N}}}
\newcommand{\CO}{{\mathcal {O}}}
\newcommand{\CP}{{\mathcal {P}}}
\newcommand{\CR }{{\mathcal {R}}}
\newcommand{\CS}{{\mathcal {S}}}
\newcommand{\CV}{{\mathcal {V}}}
\newcommand{\CX}{{\mathcal {X}}}
\newcommand{\CY}{{\mathcal {Y}}}
\renewcommand{\div}{{\mathrm{div}}}
\newcommand{\red}{{\mathrm{red}}}
\newcommand{\Gal}{{\mathrm{Gal}}}
\newcommand{\Hom}{{\mathrm{Hom}}}
\newcommand{\Lie}{{\mathrm{Lie}}}
\newcommand{\ord}{{\mathrm{ord}}}
\newcommand{\rank}{{\mathrm{rank}}}
\newcommand{\Pic}{\mathrm{Pic}}
\renewcommand{\Im}{{\mathrm{Im}}}
\newcommand{\nef}{{\mathrm{nef}}}
\font\cyr=wncyr10  \newcommand{\Sha}{\hbox{\cyr X}}
\DeclareMathOperator{\Spec}{Spec}
\newcommand{\PP}{\mathbb{P}}    
\newcommand{\QQ}{\mathbb{Q}}
\newcommand{\ZZ}{\mathbb{Z}} 
\newcommand{\FF}{\mathbb{F}} 
\newcommand{\GG}{\mathbb{G}} 
\newcommand{\OO}{\overline\Omega}
\newcommand{\lra}{\longrightarrow}
\newcommand{\fppf}{\mathrm{fppf}} 
\newcommand{\et}{\mathrm{et}} 
\newcommand{\zar}{\mathrm{Zar}}
\newcommand{\fg}{\mathfrak{g}}
\newtheorem{thm}{Theorem}[section]
\newtheorem{cor}[thm]{Corollary}
\newtheorem{lem}[thm]{Lemma}
\newtheorem{pro}[thm]{Proposition}
\newtheorem{conj}[thm]{Conjecture}
\theoremstyle{definition}
\theoremstyle{remark}
\newtheorem{remark}[thm]{Remark}
\begin{document}

\title{Positivity of Hodge bundles of abelian varieties over some function fields}
\author{Xinyi Yuan}
\maketitle

\tableofcontents

\section{Introduction}

Given an abelian variety $A$ over the rational function field $K=k(t)$ of a finite field $k$, we prove the following results:
\begin{itemize}
\item[(1)] $A$ is isogenous to the product of a constant abelian variety over $K$ and an abelian variety over $K$ whose N\'eron model over $\PP^1_k$ has an ample Hodge bundle. 
\item[(2)] finite generation of the abelian group $A(K^{\rm per})$ if $A$ has semi-abelian reduction over $\PP^1_k$, as part of
the ``full'' Mordell--Lang conjecture for $A$ over $K$;
\item[(3)] finiteness of the abelian group $\Sha(A)[F^\infty]$, the subgroup  of elements of the Tate--Shafarevich group $\Sha(A)$ annihilated by iterations of the relative Frobenius homomorphisms, if $A$ has semi-abelian reduction over $\PP^1_k$;
\item[(4)] the Tate conjecture for all projective and smooth surfaces $X$ over finite fields with $H^1(X,\CO_X)=0$ implies the Tate conjecture for all projective and smooth surfaces over finite fields. 
\end{itemize}
Result (1) is the main theorem of this paper, which implies the other results listed above. 
Results (2) and (3) are inspired by the paper \cite{Ros1} of Damian R\"ossler; our proof of result (1) uses a quotient construction which is indepedently introduced by Damian R\"ossler in his more recent paper \cite{Ros2}.

\subsection{Positivity of Hodge bundle}

Let $S$ be a projective and smooth curve over a field $k$, and $K=k(S)$ be the function field of $S$. 
Let $A$ be an abelian variety over $K$, and $\CA$ be the N\'eron model of $A$ over $S$ (cf. \cite[\S 1.2, Def. 1]{BLR}). 
The \emph{Hodge bundle} of $A$ over $K$ (or more precisely, of $\CA$ over $S$) is defined to be the locally free $\CO_S$-module
$$
\overline\Omega_{A}
=\overline\Omega_{\CA/S}
= e^*\Omega_{\CA/S}^1,
$$
where $\Omega_{\CA/S}^1$ the relative differential sheaf, and
$e:S\to \CA$ denotes the identity section of $\CA$. 

The \emph{height} $h(A)$ of $A$, defined to be 
$\deg(\overline\Omega_{A})$, has significant applications in Diophantine geometry.
In fact, it was used by Parshin and Zarhin to treat the Mordell conjecture over function fields and the Tate conjecture for abelian varieties over function fields. 
The number field analogue, called the Faltings height, was introduced by Faltings and plays a major role in his proof of these conjectures over number fields.

By results of Moret-Bailly and Faltings--Chai, we have $h(A)\geq 0$, or equivalently the determinant line bundle $\det(\overline\Omega_{A})$ is nef over $S$. The equality 
$h(A)= 0$ holds if and only if $\CA$ is isotrivial over $S$. 
See Theorem \ref{northcott} of the current paper. However, as we will see,  the positivity of the whole vector bundle $\overline\Omega_{A}$ is more delicate (especially in positive characteristics). The goal of this paper is to study this positivity, and gives some arithmetic applications of it.
We follow Hartshorne's notion of ample vector bundles and nef vector bundles, as in \cite{Har} and \cite[Chap. 6]{Laz}. Namely, a vector bundle $\CE$ over a scheme is 
\emph{ample} (resp. \emph{nef}) if the tautological bundle $\CO(1)$ over the projective space bundle $\PP(\CE)$ is an ample (resp. nef) line bundle. 

If $k$ has characteristic 0, it is well-known that $\overline\Omega_{A}$ is nef over $S$. This is a consequence of an analytic result of Griffiths. See also \cite[Cor. 2.7]{Bos} for an algebraic proof of this fact. 

If $k$ has a positive characteristic, $\overline\Omega_{A}$ can easily fail to be nef, as shown by the example of Moret-Bailly \cite[Prop. 3.1]{MB1}.
The example is obtained as the quotient of $(E_1\times_k E_2)_K$ by a local subgroup scheme over $K$, where $E_1$ and $E_2$ are supersingular elliptic curves over $k$.  
The quotient abelian surface has a proper N\'eron model over $S$. 

To ensure the ampleness or nefness of the Hodge bundle, one needs to impose some strong conditions. 
In this direction, R\"ossler \cite[Thm. 1.2]{Ros1} proved
that $\overline\Omega_{A}$ is nef if $A$ is an ordinary abelian variety over $K$, and 
that $\overline\Omega_{A}$ is ample if moreover
there is a place of $K$ at which $A$ has good reduction with $p$-rank $0$.

In another direction, we look for positivity by varying the abelian variety in its isogeny class. The main theorem of this paper is as follows. 

\begin{thm} \label{positivity main}
Denote $K= k(t)$ for a finite field $k$, and let $A$ be an abelian variety over $K$. 
Then $A$ is isogenous to 
$B\times_{K} C_{K}$, where $C$ is an abelian variety over $k$, and $B$ is an abelian variety over $K$ whose Hodge bundle is ample over $\PP_{k}^1$.
\end{thm}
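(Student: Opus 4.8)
The plan is to extract from $A$, up to isogeny, the largest "constant part" and show that what remains has ample Hodge bundle. First I would set up the constant part: let $C$ be the $K/k$-trace of $A$ (the final object of the category of pairs consisting of a constant abelian variety over $k$ mapping to $A$ over $K$), which exists by Chow's theory of the trace. The trace map $C_K \to A$ is a closed immersion up to isogeny in our situation, so choosing a complement up to Poincaré reducibility we may write $A \sim B \times_K C_K$ for some abelian variety $B$ over $K$ whose $K/k$-trace is trivial. The content of the theorem is then that \emph{an abelian variety over $K = k(t)$ with trivial $K/k$-trace has ample Hodge bundle over $\PP^1_k$}; this is what I would prove.

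For the positivity of $\overline\Omega_B$, I expect the engine to be the quotient construction attributed in the introduction to R\"ossler \cite{Ros2}. The idea is that ampleness of a nef-ish bundle on $\PP^1_k$ can be detected by showing that no quotient line bundle has nonpositive degree; but a destabilizing quotient $\overline\Omega_B \twoheadrightarrow \CL$ with $\deg \CL \le 0$ should produce extra structure on $B$ — concretely, pulling back along Frobenius iterates and using the semi-abelian (after base change, semistable) reduction model, such a quotient of the Hodge bundle corresponds to a sub-$p$-divisible group or an isotrivial sub-datum. I would then take the quotient of $B$ by the corresponding (possibly infinitesimal) subgroup scheme, following the construction in \cite{Ros2}, to produce an abelian variety isogenous to $B$ whose Hodge bundle is "more positive," and iterate. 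The key numerical input is the inequality $h(B) \ge 0$ with equality iff isotrivial (Theorem \ref{northcott}), applied not just to $B$ but to all the abelian varieties appearing along the quotient tower, together with the behaviour of $h$ under the relative Frobenius isogeny, which raises degrees in a controlled way. A descent/termination argument — the heights cannot decrease forever and the trace stays trivial — forces the process to stop at a $B$ with ample $\overline\Omega_B$.

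Two technical reductions will smooth the argument. First, to pass from $S = \PP^1_k$ to a finite cover $S' \to \PP^1_k$ over which $B$ acquires semi-abelian reduction: ampleness of a vector bundle can be checked after a finite surjective base change (the tautological bundle pulls back to the tautological bundle on the pulled-back projective bundle, and ampleness descends along finite surjective morphisms), and the Hodge bundle base-changes compatibly, so I may assume semi-abelian reduction everywhere. Second, ampleness over the curve $\PP^1_k$ is equivalent to the statement that every quotient bundle has positive degree, which reduces the whole problem to a statement about degrees of quotients of $\overline\Omega_B$ and of its Frobenius pullbacks.

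The hard part will be the quotient construction itself: showing that a quotient $\overline\Omega_B \twoheadrightarrow \CL$ of nonpositive degree actually comes from a subgroup scheme of $B$ (over the semistable model, in a way compatible with the group structure), and that quotienting by it strictly improves positivity while keeping the $K/k$-trace trivial and keeping the height in a bounded range so the induction terminates. Controlling the interaction of this construction with the bad fibres of the Néron model over $\PP^1_k$ — where $\overline\Omega$ of the quotient is not simply read off from $\overline\Omega_B$ — is the delicate point, and is presumably where Moret-Bailly's counterexample \cite[Prop. 3.1]{MB1} shows the naive hope fails and the trace-triviality hypothesis is genuinely used.
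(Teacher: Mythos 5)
Your global strategy — split off the $K/k$-trace first, then show the trace-free part has ample Hodge bundle via a quotient process controlled by heights — is broadly in the right spirit, but there is a genuine gap at the crucial step, and one of your technical reductions does not work.

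The central gap is your termination argument. You claim that ``the heights cannot decrease forever and the trace stays trivial forces the process to stop at a $B$ with ample $\overline\Omega_B$.'' Stabilization of the height does \emph{not} imply the quotient process terminates, nor that the Hodge bundle becomes ample. What stabilization of $h(\CA_n)$ actually gives is that for $n\gg 0$ the nef part $\Lie(\CA_n/S)_{\nef}$ has degree zero, i.e.\ it is a direct sum of copies of $\CO_S$ — it can perfectly well remain a nonzero trivial summand forever, and then the quotient process continues indefinitely with the Hodge bundle nef but never ample. This is precisely the case that must be analyzed, and it is the heart of the proof: the paper shows that the infinite chain of height-one subgroup schemes $\ker(\CA_{n_0}\to\CA_n)$ is \emph{of constant type} (Lemmas \ref{constant1}, \ref{constant2}), converts it into a $p$-divisible subgroup of $\CA_{n_0}[p^\infty]$ of constant type (Lemma \ref{divisible}), and then uses the semisimplicity of $p$-divisible groups of abelian varieties over finite fields (Theorem \ref{semisimple}) together with de~Jong's theorem $\Hom(A,B)\otimes\ZZ_p\simeq\Hom(A[p^\infty],B[p^\infty])$ (Theorem \ref{hom}) to produce a nonzero element of $\Hom(C_K,A)$, i.e.\ a nontrivial trace. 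Without this bridge, your proposal has no way to get from ``height is stationary'' to either ``ample'' or ``nontrivial trace.'' In particular, the implication you single out as the content of the theorem — trivial $K/k$-trace implies ample Hodge bundle — is exactly what the $p$-divisible group / de~Jong argument proves by contraposition, and your proposal does not contain it.

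Separately, your first technical reduction does not go through. You propose to pass to a finite cover $S'\to\PP^1_k$ acquiring semi-abelian reduction and to check ampleness after pullback. But the Hodge bundle of the Néron model over $S'$ is \emph{not} the pullback of the Hodge bundle of the Néron model over $\PP^1_k$: there is only an injection $\overline\Omega_{\CA'/S'}\hookrightarrow\psi^*\overline\Omega_{\CA/S}$, which is strict whenever the Néron model changes. Ampleness of the former thus gives no information about ampleness of the latter, and moreover the function field of $S'$ is no longer $k(t)$, taking you outside the hypotheses of the theorem (the proof really does exploit $S=\PP^1_k$, via Birkhoff--Grothendieck for the nef part, and via Lemma \ref{constant2} in the constant-type analysis). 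The paper's proof never passes to a semistable cover; the quotient process and the height formula (Theorem \ref{height}) are carried out directly on Néron models over $\PP^1_k$, using only the semistable reduction theorem at one point to establish $h\ge0$ for heights of N\'eron models (Theorem \ref{northcott}), not to replace the base curve.

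Your instinct that a nonpositive quotient of $\overline\Omega_B$ should correspond to an infinitesimal subgroup scheme is essentially correct, and the mechanism is exactly as in the paper: the maximal nef subbundle of $\Lie(\CA/S)$ is a $p$-Lie subalgebra (Lemma \ref{max lie}) and lifts by \cite{SGA3} to a finite flat height-one subgroup scheme, by which one quotients. But between that and the conclusion lies the constant-type / $p$-divisible-group argument, which your proposal does not supply.
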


To understand the theorem, we can take advantage of the simplicity of the theory of vector bundles on the projective line. By the Birkhoff--Grothendieck theorem (cf. \cite[Thm. 1.3.1]{HL}), any nonzero vector bundle $\CE$ on $S=\PP^1_k$ (for any base field $k$) can be decomposed as 
$$
\CE\simeq\CO(d_1) \oplus 
\CO(d_2) \oplus \cdots \oplus \CO(d_r), 
$$ 
with uniquely determined integers $d_1\geq d_2 \geq \cdots \geq d_r.$
Under this decomposition, $\CE$ is \emph{ample} if and only if $d_r>0$;
$\CE$ is \emph{nef} if and only if $d_r\geq0$. 

Go back to our theorem. 
It explains that by passing to isogenous abelian varieties, the Hodge bundle becomes nef, and the non-ample part of the nef Hodge bundle actually comes from a constant abelian variety. 
We expect the theorem to hold for the function field $K$ of any curve over any field $k$, though our proof of the current case depends heavily on many special properties of $S=\PP^1_k$.

\subsection{Purely inseparable points}

For a field $K$ of characteristic $p>0$, the \emph{perfect closure} of $K$ is the union
$$K^{\rm per}=\cup_n K^{1/p^n}$$ 
in the algebraic closure of $K$. 
The first consequence of our main theorem is the following result. 

\begin{thm}  \label{ML main}
Denote $K= k(t)$ for a finite field $k$. 
Let $A$ be an abelian variety over $K$ with everywhere semi-abelian reduction over $\PP^1_k$. 
Then $A(K^{\rm per})$ is a finitely generated abelian group. 
\end{thm}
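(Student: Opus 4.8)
The plan is to reduce the finite generation of $A(K^{\rm per})$ to the ampleness of the Hodge bundle of a well-chosen isogenous abelian variety, which is exactly the content of Theorem \ref{positivity main}. First I would invoke Theorem \ref{positivity main} to write an isogeny $A \to B \times_K C_K$ over $K$, where $C$ is an abelian variety over $k$ and $B$ has ample Hodge bundle over $\PP^1_k$. Since the kernel of an isogeny is finite, $A(K^{\rm per})$ and $(B\times_K C_K)(K^{\rm per}) = B(K^{\rm per}) \times C_K(K^{\rm per})$ differ by finite groups in each direction, so it suffices to prove finite generation of $B(K^{\rm per})$ and of $C_K(K^{\rm per})$ separately. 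The second is easy: $C$ is constant, so $C_K(K^{\rm per})$ injects into $C(\overline{k})$ via the structural map, and in fact $C_K(K^{\rm per}) = C(k')$ for the (finite) field of constants $k'$ of $K^{\rm per}$ — actually $K^{\rm per}$ has the same field of constants $k$ since $k$ is perfect — so $C_K(K^{\rm per}) = C(k)$ is finite. (One should note here that the semi-abelian reduction hypothesis on $A$ is used to guarantee that $B$ and $C$ inherit good enough reduction, or alternatively that the hypothesis is only needed to run the argument for $B$; I would check that the isogeny can be taken compatibly with Néron models or absorb the discrepancy into the finite kernels.)

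The heart of the matter is thus: if $B$ is an abelian variety over $K = k(t)$ whose Hodge bundle $\overline\Omega_B$ is ample over $\PP^1_k$, then $B(K^{\rm per})$ is finitely generated. The strategy is to compare $B(K^{\rm per})$ with $B(K)$ by analyzing the purely inseparable layers $B(K^{1/p^n})$. Writing $K^{1/p^n} = k(t^{1/p^n})$, a point in $B(K^{1/p^n})$ is the same as a point of $B^{(p^{-n})}(K)$, i.e. of the Frobenius twist, and the relative Frobenius gives maps $B^{(p^{-n})} \to B$ whose effect on Hodge bundles multiplies degrees by $p$. Concretely, I would use the standard device: $B(K^{1/p}) = B^{(1/p)}(K)$, and the Verschiebung/Frobenius factorization $[p] = V \circ F$ shows that the extra points in $B(K^{1/p})$ not coming from $B(K)$ are controlled by the kernel of Verschiebung, a group scheme whose size is governed by the $p$-rank. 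Iterating, finite generation of $\bigcup_n B(K^{1/p^n})$ follows once one shows the ranks of $B(K^{1/p^n})$ stabilize; and the ampleness of $\overline\Omega_B$ is precisely what forces this stabilization, because it bounds (via a height/degree argument as in Theorem \ref{northcott} and the Lang–Néron theorem) the Mordell–Weil ranks of the Frobenius twists uniformly. In fact the cleanest route is: $\overline\Omega_B$ ample $\Rightarrow$ $\overline\Omega_{B^{(p^{-n})}}$ has all its Birkhoff–Grothendieck summands of degree $\geq 1$, hence of degree growing like $p^n$, which via the Lang–Néron / height machinery prevents the appearance of new non-constant points beyond a bounded stage, so $B(K^{\rm per}) = B(K^{1/p^N})$ for some $N$, and the latter is finitely generated by the Lang–Néron theorem (Mordell–Weil over function fields).

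The main obstacle I anticipate is making the last implication precise: translating "ample Hodge bundle" into an effective bound that stops the tower $B(K) \subseteq B(K^{1/p}) \subseteq B(K^{1/p^2}) \subseteq \cdots$ from growing. The subtlety is that the Frobenius pullback $B \mapsto B^{(1/p)}$ both twists the abelian variety and can a priori change its trace (the constant part), and one must control the interplay: new rational points over $K^{1/p^n}$ could in principle come from the $K^{1/p^n}/K$-trace rather than from genuine growth, but the semi-abelian reduction hypothesis together with ampleness of $\overline\Omega_B$ (which in particular means $B$ has no constant part — an isotrivial abelian variety has degree-zero, non-ample Hodge bundle by Theorem \ref{northcott}) should rule this out. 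A secondary technical point is checking that ampleness is preserved or at least not destroyed under the relevant Frobenius twists, which I would handle by the explicit degree computation $\deg \overline\Omega_{B^{(p)}} = p \cdot \deg \overline\Omega_B$ on each Birkhoff–Grothendieck summand together with the classification of ampleness on $\PP^1$ recalled after Theorem \ref{positivity main}. Once the tower stabilizes, finite generation is the classical Lang–Néron theorem and nothing more is needed.
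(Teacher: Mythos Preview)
Your reduction via Theorem \ref{positivity main} and your treatment of the constant factor $C_K(K^{\rm per}) = C(k)$ match the paper exactly; the paper then simply invokes R\"ossler \cite[Thm.~1.1]{Ros1} as a black box for the implication ``$\overline\Omega_B$ ample $\Rightarrow$ $B(K^{\rm per})$ finitely generated'' and is done. (Your side worry about semi-abelian reduction of $B$ is harmless: isogenous abelian varieties share reduction type, cf.\ \cite[\S7.3, Cor.~7]{BLR}.)

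The gap is in the mechanism you propose for that key implication. You argue that the Birkhoff--Grothendieck summands of the Hodge bundle of the Frobenius twist have degrees growing like $p^n$, and that ``Lang--N\'eron / height machinery'' then bounds the Mordell--Weil ranks of the twists uniformly in $n$. No such bound is available from those ingredients: Lang--N\'eron gives finite generation of each $B(K^{1/p^n})$ separately but says nothing uniform in $n$, and Theorem \ref{northcott} concerns the height of the abelian variety, not ranks of rational points. R\"ossler's actual argument---which the paper adapts to torsors in \S\ref{section inseparable}, following an idea of Kim---is pointwise and differential-geometric rather than rank-theoretic: given $P \in B(K^{1/p^n})$ not already in $B(K)$, one takes the Zariski closure of $P$ in the N\'eron model $\mathcal{B}$, normalizes to a smooth curve $\mathcal{P}$ with a purely inseparable map $\psi: \mathcal{P} \to S$ of degree $p^m$ (some $1 \le m \le n$), and pulls back relative differentials to obtain a nonzero map $\psi^* \overline\Omega_{\mathcal{B}/S} \to \Omega^1_{\mathcal{P}/k}(E)$ with $E$ supported on the bad-reduction locus. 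Comparing minimal slopes then gives $p^m \cdot \mu_{\min}(\overline\Omega_B) \leq 2g - 2 + \deg(E_0)$, bounding $m$ directly once $\mu_{\min}(\overline\Omega_B) > 0$. Your intuition that ampleness forces the tower $B(K) \subset B(K^{1/p}) \subset \cdots$ to stabilize is correct, but the bound comes from a slope inequality attached to a single point, not from any global control of ranks under Frobenius twist.
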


By the Lang--N\'eron theorem, which is the function field analogue of the Mordell--Weil theorem, the theorem is equivalent to the equality $A(K^{\rm per})=A(K^{1/p^n})$ for sufficiently large $n$.

For general global function field $K$, the theorem is proved by Ghoica \cite{Ghi}  for non-isotrivial elliptic curves, and by 
R\"ossler \cite[Thm. 1.1]{Ros1} assuming that the Hodge bundle $\overline\Omega_{A}$ is ample. 
By R\"ossler's result, Theorem \ref{ML main} is a consequence of Theorem \ref{positivity main}. 
In fact, it suffices to note the fact that any $k$-morphism from $\PP^1_k$ to an abelian variety $C$ over $k$ is constant; i.e., its image is a single $k$-point of $C$. 

Finally, we remark that Theorem \ref{ML main} is related to the so-called 
\emph{full} Mordell--Lang conjecture in positive characteristic. 
Recall that the Mordell--Lang conjecture, which concerns rational points of subvarieties of abelian varieties, was proved by Faltings over number fields. A positive characteristic analogue was obtained by Hrushovski. 
However, including consideration of the \emph{$p$-part}, the full Mordell--Lang conjecture in positive characteristics, formulated by Abramovich and Voloch, requires an extra result like Theorem \ref{ML main}.
We refer to \cite{Sca, GM} for more details.
We also refer to R\"ossler \cite{Ros2} for some more recent works on this subject.

\subsection{Partial finiteness of Tate--Shafarevich group}

Let $A$ be an abelian variety over a global function field $K$ of characteristic $p$. 
Recall that the Tate--Shafarevich group of $A$ is defined by
$$
\Sha(A) =\ker \left(H^1(K, A)\lra \prod_{v} H^1(K_v, A)\right),
$$
where the product is over all places $v$ of $K$.
The prestigious Tate--Shafarevich conjecture asserts that $\Sha(A)$ is finite. By the works of Artin--Tate \cite{Tat3}, Milne \cite{Mil3}, Schneider \cite{Sch}, Bauer \cite{Bau} and Kato--Trihan \cite{KT}, the Birth and Swinnerton-Dyer conjecture for $A$ is equivalent to the finiteness of $\Sha(A)[\ell^\infty]$ for some prime $\ell$ (which is allowed to be $p$). 

Denote by $F^n: A\to A^{(p^n)}$ the relative $p^n$-Frobenius morphism over $K$. 
Define
$$
\Sha(A)[F^n]= \ker(\Sha(F^n): \Sha(A)\to \Sha(A^{(p^n)}))
$$
and 
$$\Sha(A)[F^\infty]=\bigcup_{n\geq 1} \Sha(A)[F^n].$$
Both are subgroups of $\Sha(A)$.
Note that $F^n:A\to A^{(p^n)}$ is a factor of the multiplication $[p^n]:A\to A$, so $\Sha(A)[F^\infty]$ is a subgroup of $\Sha(A)[p^\infty]$. 
These definitions generalize to the function field $K$ of a curve over any field $k$ of characteristic $p>0$. 

\begin{thm} \label{finiteness main}
Let $S$ be a projective and smooth curve over a perfect field $k$ of characteristic $p>0$, and $K$ be the function field of $S$. Let $A$ be an abelian variety over $K$. 
Then the following are true:
\begin{itemize}
\item[(1)] If $S=\PP_k^1$, the abelian variety $A$ has everywhere good reduction over 
$S$, and the Hodge bundle of $A$ is nef over $S$,  
then $\Sha(A)[F^\infty]=0$.
\item[(2)] If $A$ has everywhere semiabelian reduction over 
$S$ and the Hodge bundle of $A$ is ample over $S$, then 
$\Sha(A)[F^\infty]=\Sha(A)[F^{n_0}]$ for some positive integer $n_0$. 
\end{itemize}
\end{thm}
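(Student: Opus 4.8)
\section*{Proof proposal for Theorem \ref{finiteness main}}

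The plan is to translate the $F^\infty$-torsion of $\Sha(A)$ into the flat cohomology over $S$ of the finite flat group schemes $\ker(F^n_{\CA^0/S})$, where $\CA^0$ is the fiberwise identity component of the N\'eron model $\CA$, and then --- this is the new input --- to compute that flat cohomology in terms of the coherent cohomology of the Hodge bundle and its Frobenius pullbacks, where the positivity hypotheses force vanishing. Since $A$ has semiabelian reduction, $G_n:=\ker(F^n_{\CA^0/S})$ is a finite flat $S$-group scheme of order $p^{ng}$ ($g=\dim A$) with $\underline\omega_{G_1}\cong\overline\Omega_{\CA/S}=:\omega$, and factoring $F^n$ into $n$ relative Frobenii yields a filtration $0=G_0\subset G_1\subset\cdots\subset G_n$ with successive quotients $G_j/G_{j-1}\cong (G_1)^{(p^{j-1})}$, where $(-)^{(p^j)}=F_S^{j*}(-)$ denotes pullback along the $j$-th power of the absolute Frobenius $F_S$ of $S$. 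Chasing the Kummer sequences $0\to (G_1)^{(p^n)}\to\CA^{0,(p^n)}\xrightarrow{F}\CA^{0,(p^{n+1})}\to 0$, together with the standard fact that a class in $\Sha$ is everywhere locally trivial (hence everywhere flat) and with the comparison of local conditions at places of good reduction, one finds that the successive quotient $\Sha(A)[F^{n+1}]/\Sha(A)[F^n]$ embeds, up to a subquotient of the fixed finite group $\bigoplus_v\Phi_v$ built from the component groups of $\CA$ at the finitely many bad places (an error that is trivial in the everywhere-good-reduction case of part (1)), into $H^1_{\fppf}(S,(G_1)^{(p^n)})$. So it suffices to bound these groups.

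To compute $H^1_{\fppf}(S,(G_1)^{(p^j)})$ one uses Dieudonn\'e theory: $G_1$ is killed by its relative Frobenius, and the Dieudonn\'e crystal of the $p$-divisible group $\CA^0[p^\infty]$ --- concretely $H^1_{\mathrm{dR}}(\CA^0/S)$ with its Hodge filtration $0\to\omega\to H^1_{\mathrm{dR}}(\CA^0/S)\to\Lie(\CA^{0\vee}/S)\to 0$ and its Frobenius and Verschiebung --- provides a two-term complex of coherent $\CO_S$-modules computing $R\Gamma_{\fppf}(S,G_1)$, and likewise with $\omega$ replaced by $\omega^{(p^j)}$ for the twist $(G_1)^{(p^j)}$. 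As $S$ is reduced and $G_1$ is infinitesimal, $H^0_{\fppf}(S,(G_1)^{(p^j)})=0$; what remains is an exact sequence presenting $H^1_{\fppf}(S,(G_1)^{(p^j)})$ as built from $H^1(S,\omega^{(p^j)})$, from the cokernel of a Frobenius-semilinear (Hasse--Witt) operator targeting $H^0(S,\omega^{(p^j)})$, and from a $\Pic(S)$-theoretic term attached to the multiplicative part of $G_1$. The last term, after imposing the everywhere-trivial local conditions of the previous paragraph, is controlled exactly as in R\"ossler's treatment of the ordinary case, and is bounded independently of $j$ (and is trivial when $S=\PP^1_k$, since there $\Pic^0=0$ and $H^0(\PP^1,\CO^*_{\PP^1})$ has no $p$-torsion quotient).

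Now I invoke positivity. In part (1), $S=\PP^1_k$ and $\omega$ is nef, so by Birkhoff--Grothendieck $\omega\cong\bigoplus_i\CO(d_i)$ with all $d_i\geq 0$; since $k$ is perfect, $\omega^{(p^j)}\cong\bigoplus_i\CO(p^j d_i)$ still has all summands of degree $\geq 0$, whence $H^1(\PP^1,\omega^{(p^j)})=0$ for every $j\geq 0$, and the good-reduction hypothesis makes the Hasse--Witt operator surjective at the $H^0$-level (using that Frobenius is bijective on $H^0(\PP^1,\CO_{\PP^1})=k$ and that the negative-degree bundle $\Lie(\CA/S)=\omega^\vee$ contributes nothing). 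Hence $H^1_{\fppf}(\PP^1,(G_1)^{(p^j)})=0$ for all $j$, so $\Sha(A)[F^{n+1}]=\Sha(A)[F^n]$ for all $n$ and, running the Kummer sequence back to $n=0$, also $\Sha(A)[F^1]=0$; therefore $\Sha(A)[F^\infty]=0$. In part (2), $\omega$ is ample on the curve $S$, so $\deg(\omega^{(p^j)})=p^j\deg(\omega)\to\infty$; by Serre vanishing on a curve, $H^1(S,\omega^{(p^j)})=0$ once $j\geq n_0$ for a suitable $n_0$, with the Hasse--Witt cokernel vanishing in the same range, while the $\Pic$-theoretic term and the component-group error are bounded uniformly in $n$. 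Consequently $\Sha(A)[F^{n+1}]=\Sha(A)[F^n]$ for all $n\geq n_0$ (after enlarging $n_0$ to absorb the bounded errors), i.e. $\Sha(A)[F^\infty]=\Sha(A)[F^{n_0}]$.

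The main obstacle is the second step: extracting from Dieudonn\'e theory the precise coherent complex computing $H^\bullet_{\fppf}(S,(G_1)^{(p^j)})$, cleanly separating the multiplicative ($\Pic$-theoretic) from the unipotent ($\omega$-theoretic) contributions, and --- for part (1) --- proving that the Hasse--Witt map is actually surjective at the $H^0$-level, which is precisely what upgrades the conclusion from ``stabilizes'' to ``vanishes'' and which is the point where good reduction, rather than mere semiabelian reduction, is needed, since it prevents any contribution of the negative-degree bundle $\omega^\vee=\Lie(\CA/S)$ to $H^1$. A secondary, more bookkeeping-type difficulty is keeping every error term --- the component groups $\Phi_v$, the multiplicative local contributions, the Selmer defect --- bounded uniformly in $n$, so that one genuinely obtains stabilization and not merely boundedness of the successive quotients.
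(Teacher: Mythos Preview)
Your approach is genuinely different from the paper's. The paper never computes $H^1_{\fppf}(S,G_1^{(p^j)})$ or invokes Dieudonn\'e theory at all. Instead it argues geometrically, following R\"ossler and Kim: given a nontrivial $X\in\Sha(A)[F^\infty]$, one has $X(K^{1/p^n})\neq\emptyset$ for some $n$, so there is a closed point $P\in X$ purely inseparable over $K$. The paper builds the N\'eron model $\CX$ of the \emph{torsor} $X$, takes the normalization $\CP$ of the Zariski closure of $P$ in $\CX$, and produces a nonzero $\CO_\CP$-map $\tau:\psi^*\overline\Omega_{\CA/S}\to\Omega^1_{\CP/k}$, where $\psi:\CP\to S$ is the induced purely inseparable cover. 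In part~(1), properness of $\CA$ forces $\CP$ proper, hence $\CP\cong\PP^1_k$; then $\tau\neq 0$ gives $\mu_{\min}(\psi^*\overline\Omega_{\CA/S})\le\deg\Omega^1_{\PP^1/k}=-2$, contradicting nefness. In part~(2), $\CP$ is only quasi-projective; the paper compactifies $\tau$ to a map into $\Omega^1_{\CP^c/k}(E)$ with log poles along the bad fibers, using the Faltings--Chai toroidal compactification (Proposition~\ref{extension}), and then ampleness of $\overline\Omega_{\CA/S}$ bounds $\deg\psi=p^n$.

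Your cohomological route is in principle viable --- this is close in spirit to how one often attacks $p$-Selmer groups --- and it has the advantage of being additive in $n$ from the start, which makes the ``stabilization'' conclusion in part~(2) transparent. But the paper's geometric argument is both shorter and sharper for part~(1): the contradiction $\mu_{\min}\ge 0$ versus $\le -2$ gives vanishing immediately, with no Hasse--Witt surjectivity to verify and no multiplicative or component-group bookkeeping. Your own diagnosis is accurate: the serious work in your outline is the second step, and you have not actually produced the two-term complex or proved the surjectivity you need. In particular, your parenthetical that ``the negative-degree bundle $\omega^\vee=\Lie(\CA/S)$ contributes nothing'' is not quite right when $\omega$ is merely nef, since $\omega^\vee$ then has degree-zero summands with nonzero $H^0$; you would need to show the relevant Frobenius-semilinear map is bijective on those summands, which is not automatic. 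The paper sidesteps all of this by never leaving the world of curves and differentials.
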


This theorem is analogous to R\"ossler \cite[Thm. 1.1]{Ros1}, and the proof is inspired by the loc. cit..
One consequence of Theorem \ref{positivity main} and Theorem \ref{finiteness main} is the following result.  

\begin{cor}  \label{finiteness cor}
Let $S$ be a projective and smooth curve over a finite field $k$, and $K$ be the function field of $S$. Let $A$ be an abelian variety over $K$. 
Then $\Sha(A)[F^\infty]$ is finite in each of the following cases:
\begin{itemize}
\item[(1)] $A$ is an elliptic curve over $K$;  
\item[(2)] $S=\PP_k^1$ and $A$ has everywhere semi-abelian reduction over $\PP^1_k$; 
\item[(3)] $A$ is an ordinary abelian variety over $K$, and there is a place of $K$ at which $A$ has good reduction with $p$-rank $0$.
\end{itemize}
\end{cor}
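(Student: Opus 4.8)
The plan is to derive Corollary \ref{finiteness cor} from Theorem \ref{positivity main} and Theorem \ref{finiteness main}, treating the three cases separately. In case (1), an elliptic curve $A/K$ either is isotrivial, in which case $\Sha(A)[F^\infty]$ is manifestly controlled (it becomes trivial after a finite constant base change, reducing to the constant case where $F$ is an isogeny of constant abelian varieties and $\Sha(A)[F^\infty] = 0$ follows from finiteness considerations), or it is non-isotrivial. For non-isotrivial $A$, the Hodge bundle has positive degree, hence after a suitable replacement (using the Birkhoff--Grothendieck structure, since for a line bundle "nef of positive degree" equals "ample"), part (2) of Theorem \ref{finiteness main} applies directly if $A$ has semi-abelian reduction everywhere; otherwise one passes to a finite separable extension $K'/K$ over which the minimal regular model acquires semi-abelian reduction (semistable reduction theorem for elliptic curves), applies Theorem \ref{finiteness main}(2) to $A_{K'}$, and then descends finiteness of $\Sha(A)[F^\infty]$ from $\Sha(A_{K'})[F^\infty]$ along the restriction map, whose kernel is killed by $[K':K]$ and is a quotient of a Galois cohomology group that is finite over a finite base field.

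For case (2), where $S = \PP^1_k$ and $A$ has everywhere semi-abelian reduction over $\PP^1_k$, I would apply Theorem \ref{positivity main} to write $A$ as isogenous to $B \times_K C_K$ with $C/k$ an abelian variety and $\overline\Omega_B$ ample over $\PP^1_k$. Isogenies induce maps on Tate--Shafarevich groups with finite kernel and cokernel (the kernel and cokernel being subquotients of $H^0$ and $H^1$ of the finite kernel group scheme, which are finite over a finite field), so it suffices to bound $\Sha(B \times_K C_K)[F^\infty] = \Sha(B)[F^\infty] \oplus \Sha(C_K)[F^\infty]$. For $C_K$ constant, the relative Frobenius $F$ coincides with the constant Frobenius of $C$ base-changed to $K$, which is an isogeny of constant abelian varieties, and $\Sha(C_K)[F^\infty]$ is a finite group because $\Sha(C_K)[F^\infty] \subseteq \Sha(C_K)[p^\infty]$ and one can show this $p$-primary part is annihilated by a fixed power of $F$ using that $C$ is defined over the finite field $k$. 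For $B$, I first need to know $\overline\Omega_B$ ample implies $\overline\Omega_B$ nef (immediate from the Birkhoff--Grothendieck description: $d_r > 0 \Rightarrow d_r \geq 0$); then Theorem \ref{finiteness main}(2) gives $\Sha(B)[F^\infty] = \Sha(B)[F^{n_0}]$, which is a subgroup of $\Sha(B)[p^{n_0}]$, and the latter is finite by the standard finiteness of $p^{n}$-torsion in $\Sha$ over a finite field (or directly: $\Sha(B)[F^{n_0}]$ is a subquotient of the finite group $H^1$ of the finite group scheme $\ker F^{n_0}$ over the global function field $K$, which is finite by finiteness of such cohomology).

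For case (3), $A$ is ordinary over $K$ with a place of good reduction of $p$-rank $0$; here I invoke R\"ossler's theorem quoted in the introduction, that these hypotheses force $\overline\Omega_A$ to be \emph{ample} over $S$. Ampleness implies nefness, and the good reduction at one place gives semi-abelian reduction there, but Theorem \ref{finiteness main}(2) requires everywhere semi-abelian reduction; so I would pass to a finite separable extension $K' = k'(S')/K$ achieving everywhere semi-abelian reduction (semistable reduction for abelian varieties), noting that $\overline\Omega_{A_{K'}}$ remains ample (pullback of an ample bundle under the finite map $S' \to S$ is ample), apply Theorem \ref{finiteness main}(2) to get $\Sha(A_{K'})[F^\infty]$ finite as in case (2), and descend finiteness along restriction as in case (1). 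The main obstacle I anticipate is the descent of finiteness of $\Sha(\cdot)[F^\infty]$ along finite separable base change and along isogenies: one must check that the relevant restriction and isogeny maps on $\Sha$ have finite kernels, which comes down to finiteness of Galois cohomology groups $H^i(\Gal(K'/K), -)$ with coefficients in the finitely many $K'$-points of bounded $p$-power order — finite because everything lives over a finite field — together with keeping careful track of how the relative Frobenius filtration $\Sha[F^\infty]$ behaves under these operations, which is straightforward but requires the compatibility of $F^n$ with base change and with the isogeny. Everything else is bookkeeping on top of the two cited theorems.
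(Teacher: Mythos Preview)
Your overall strategy matches the paper's: reduce to semi-abelian reduction by a finite Galois base change (using that the kernel of restriction on $\Sha$ has bounded exponent and then is finite by Milne \cite{Mil2}), and then invoke Theorem~\ref{finiteness main} together with Theorem~\ref{positivity main}. Two points deserve correction or sharpening.

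First, in case~(3) you write that after passing to $K'/K$ the Hodge bundle $\overline\Omega_{A_{K'}}$ ``remains ample (pullback of an ample bundle under the finite map $S'\to S$ is ample)''. This reasoning is not valid: the N\'eron model does not commute with base change when the reduction type improves, so $\overline\Omega_{A_{K'}}$ is \emph{not} the pullback of $\overline\Omega_A$ in general. The paper avoids this by reversing the order: first pass to $K'$ to obtain everywhere semi-abelian reduction, and \emph{then} apply R\"ossler's theorem to $A_{K'}$ directly (the hypotheses ``ordinary'' and ``a place of good reduction with $p$-rank~$0$'' are preserved under base change). With this reordering your argument goes through.

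Second, in case~(2) your treatment of the constant factor $C_K$ is more circuitous than necessary. The paper simply observes that $C_K$ has everywhere good reduction over $\PP^1_k$ and trivial (hence nef) Hodge bundle, so Theorem~\ref{finiteness main}(1) gives $\Sha(C_K)[F^\infty]=0$ outright. This is cleaner than your ad hoc argument about the relative Frobenius of a constant abelian variety, and it is in fact the intended role of part~(1) of that theorem. For $B$, note also (as the paper implicitly uses) that $B$ inherits semi-abelian reduction from $A$ via \cite[\S7.3, Cor.~7]{BLR}, so Theorem~\ref{finiteness main}(2) applies; from there both you and the paper conclude by bounding the exponent of $\Sha(A)[F^\infty]$ and invoking Milne~\cite{Mil2}.
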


In case (1), after a finite base change, $A$ has semi-abelian reduction, and the line bundle $\OO_A$ is ample unless $A$ is isotrivial. 
In case (2), by Theorem \ref{positivity main}, it is reduced to two finiteness results corresponding to the two cases of Theorem \ref{finiteness main} exactly. 
In case (3), after a finite base change, $A$ has semi-abelian reduction, and the line bundle $\OO_A$ is ample by R\"ossler \cite[Thm. 1.2]{Ros1}.
A detailed proof of the corollary will be given in \S \ref{section inseparable}. 

Finally, we remark that case (2) of the corollary naturally arises when taking the Jacobian variety of the generic fiber of a Lefschetz fiberation of a projective and smooth surface over $k$. This standard construction was initiated by Artin--Tate \cite{Tat3} to treat the equivalence between the Tate conjecture (for the surface) and the Birch and Swinnerton-Dyer conjecture (for the Jacobian variety). We refer to Theorem \ref{Brauer Sha} for a quick review of the equivalence.

\subsection{Reduction of Tate conjecture}

One version of the prestigious Tate conjecture for \emph{divisors} is as follows. 

\begin{conj}[Conjecture $T^1(X)$] \label{Tate conjecture}
Let $X$ be a projective and smooth variety over a finite field $k$ of characteristic $p$. Then for any prime $\ell\neq p$, the cycle class map 
$$
\Pic(X)\otimes_\ZZ \QQ_p \lra H^2(X_{\bar k}, \QQ_\ell(1))^{\Gal(\bar k/k)}
$$
is surjective.
\end{conj}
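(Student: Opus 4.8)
The plan is to attack $T^1(X)$ by the classical reduction to surfaces, followed by the Artin--Tate translation into a finiteness statement, into which the main results of this paper can be fed. \textbf{Step 1 (reduction to surfaces).} Choosing a Lefschetz pencil and a sufficiently ample smooth hyperplane section $Y\subset X$, the weak Lefschetz theorem together with the Lefschetz hyperplane theorem for Picard groups makes the restriction maps $\Pic(X)\to\Pic(Y)$ and $H^2(X_{\bar k},\QQ_\ell(1))\to H^2(Y_{\bar k},\QQ_\ell(1))$ compatible with the cycle class maps and injective (indeed isomorphisms once $\dim X$ is large). A standard diagram chase then shows that $T^1(Y)$ implies $T^1(X)$, so by induction on dimension one reduces to the case $\dim X=2$.

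\textbf{Step 2 (Artin--Tate and the positivity theorem).} Let $X$ be a projective smooth surface. Blowing up the base points of a Lefschetz pencil yields a fibration $f\colon X'\to\PP^1_k$; let $J$ be the Jacobian of its generic fibre, an abelian variety over $K=k(t)$ which, in this standard construction, has everywhere semi-abelian reduction over $\PP^1_k$. By the Artin--Tate formalism reviewed in Theorem \ref{Brauer Sha}, $T^1(X)$ is equivalent to the finiteness of the Brauer group $\Br(X)$, and $\Br(X)$ is identified with $\Sha(J)$ up to a finite group. Now Theorem \ref{positivity main} writes $J$, up to isogeny, as $B\times_K C_K$ with $\overline\Omega_B$ ample over $\PP^1_k$ and $C$ an abelian variety over $k$; combining the two cases of Theorem \ref{finiteness main} as in Corollary \ref{finiteness cor}(2), the purely inseparable part $\Sha(J)[F^\infty]$ is finite. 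Thus $T^1(X)$ is reduced to controlling the remaining part of $\Sha(J)$, equivalently the $\ell$-primary part of $\Br(X)$ for a single prime $\ell\neq p$.

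\textbf{Step 3 (the main obstacle).} This last reduction is exactly where the argument halts, and it is the reason $T^1(X)$ remains open: by the equivalences of Artin--Tate, Milne, Schneider, Bauer and Kato--Trihan recalled in the introduction, finiteness of $\Sha(J)[\ell^\infty]$ for one prime $\ell$ is equivalent to the full Tate--Shafarevich conjecture for $J$ and to $T^1(X)$ itself, so the present methods --- which reach only the Frobenius-torsion part of $\Sha$ --- cannot close the gap. A complete proof would require a genuinely new input: a direct finiteness theorem for $\Sha$ over $k(t)$ beyond the elliptic and isotrivial cases, or (via result (4) of the introduction) an unconditional verification of $T^1$ for surfaces $X$ with $H^1(X,\CO_X)=0$. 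Absent such an input, the steps above yield only the conditional and special cases recorded in Corollary \ref{finiteness cor}, not the conjecture in general.
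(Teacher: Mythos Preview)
The statement you were asked to prove is not a theorem in the paper: it is stated there as a \emph{conjecture} (Conjecture~$T^1(X)$, the Tate conjecture for divisors), and the paper offers no proof of it. What the paper actually proves in this direction is Theorem~\ref{reduction main}, a \emph{reduction} asserting that $T^1$ for all smooth projective surfaces over $k$ follows from $T^1$ for those surfaces with $H^1(X,\CO_X)=0$. So there is no proof in the paper to compare your attempt against.

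You seem to have realized this yourself: your Step~3 correctly explains that the methods of the paper reach only $\Sha(J)[F^\infty]$, not $\Sha(J)[\ell^\infty]$ for $\ell\neq p$, and that closing this gap is equivalent to the full conjecture. That diagnosis is accurate. A couple of minor corrections to your Steps~1--2: the reduction to surfaces is not achieved by weak Lefschetz arguments of the kind you sketch (those give $T^1(X)\Rightarrow T^1(Y)$, the wrong direction), but rather by the variational result of Morrow cited in the paper; and the Artin--Tate equivalence is more naturally phrased as $T^1(X')\Leftrightarrow BSD(J)$ rather than via $\Br(X)\simeq\Sha(J)$ up to finite groups. But these points are moot, since neither route leads to a proof of the conjecture.
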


The Tate conjecture is confirmed in many cases. It is proved by Tate \cite{Tat1} for arbitrary products of curves and abelian varieties. If $X$ is a K3 surface and $p>2$, the conjecture is proved by the works of Nygaard \cite{Nyg}, Nygaard--Ogus \cite{NO}, Artin--Swinnerton-Dyer \cite{ASD}, Maulik \cite{Mau}, Charles \cite{Cha} and Madapusi-Pera \cite{MP}.
Moreover, by the recent work of Morrow \cite{Mor}, Conjecture $T^1(X)$ for all projective and smooth surfaces $X$ over $k$ implies Conjecture $T^1(X)$ for all projective and smooth varieties $X$ over $k$.

In this section, we have the following reduction of the Tate conjecture. 
\begin{thm} \label{reduction main}
Let $k$ be a fixed finite field. 
Conjecture $T^1(X)$ for all projective and smooth surfaces $X$ over $k$ satisfying $H^1(X,\CO_X)=0$ implies conjecture $T^1(X)$ for all projective and smooth surfaces $X$ over $k$.
\end{thm}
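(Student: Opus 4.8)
The plan is to reduce the Tate conjecture for an arbitrary projective smooth surface $X$ over $k$ to the case of surfaces with vanishing $H^1(\CO)$, by exploiting the Artin--Tate equivalence between $T^1$ of a surface and the finiteness of a Tate--Shafarevich group, together with Theorem \ref{positivity main}. First I would use the standard Lefschetz pencil construction: after blowing up finitely many points of $X$ (which does not affect the validity of $T^1$, since blow-ups change $\Pic$ and $H^2$ in a controlled, algebraic way), one obtains a surface $X'$ admitting a fibration $f\colon X'\to \PP^1_k$ whose generic fiber is a smooth projective curve $Y$ over $K=k(t)$. Let $A=\Jac(Y)$ be its Jacobian, an abelian variety over $K$. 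By the Artin--Tate theory reviewed in the paper (the equivalence alluded to around Theorem \ref{Brauer Sha}), $T^1(X)$ holds if and only if $T^1(X')$ holds, and $T^1(X')$ is equivalent to the finiteness of $\Sha(A)$ (equivalently, of $\Sha(A)[\ell^\infty]$ for any single prime $\ell$, including $\ell = p$).

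Next I would apply Theorem \ref{positivity main} to $A$: up to isogeny, $A\sim B\times_K C_K$ with $C$ an abelian variety over $k$ and $B$ an abelian variety over $K$ with ample Hodge bundle over $\PP^1_k$. Since $\Sha$ is an isogeny invariant up to finite groups, finiteness of $\Sha(A)$ is equivalent to finiteness of $\Sha(B)$ and of $\Sha(C_K)$ simultaneously. For the constant abelian variety $C_K$, the Tate--Shafarevich group is known to be finite (this is classical: $C_K$ corresponds to $C\times_k \PP^1_k$, and $T^1$ holds for products of an abelian variety with a curve by Tate's theorem \cite{Tat1}, or directly by descent). So the whole problem comes down to proving finiteness of $\Sha(B)$. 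The point of the hypothesis ``ample Hodge bundle'' is precisely that $B$ is then a far-from-constant abelian variety, and I would want to realize $\Sha(B)$ — or at least reduce its finiteness to — a Brauer/Tate--Shafarevich group attached to a surface $X''$ with $H^1(X'',\CO_{X''})=0$.

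The crux, and the main obstacle, is this last realization step: to produce, from $B$ (with ample Hodge bundle), a projective smooth surface $X''$ fibered over $\PP^1_k$ whose generic-fiber Jacobian is isogenous to $B$ and which satisfies $H^1(X'',\CO_{X''})=0$. The natural candidate is a suitable relative curve / elliptic-type fibration or a Lefschetz fibration associated to $B$; the condition $H^1(\CO)=0$ translates, via the Leray spectral sequence for $X''\to\PP^1_k$, into $H^0(\PP^1_k, R^1\pi_*\CO) = 0$, and for a fibration with generic-fiber Jacobian $B$ one has $R^1\pi_*\CO_{X''}$ closely related to the dual of (a piece of) the Hodge bundle $\overline\Omega_B$ over $\PP^1_k$. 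Ampleness of $\overline\Omega_B$ forces all Birkhoff--Grothendieck summands $\CO(d_i)$ to have $d_i>0$, hence its dual has all summands of negative degree, hence no global sections — giving exactly $H^0(\PP^1_k,R^1\pi_*\CO_{X''})=0$ and thus $H^1(X'',\CO_{X''})=0$. Making this precise requires care: one must control the difference between $R^1\pi_*\CO$ and $(\overline\Omega_B)^\vee$ coming from the bad fibers and from the fact that $B$ may not itself be a Jacobian, possibly passing through Zarhin's trick (replacing $B$ by $(B\times B^\vee)^4$, which is principally polarized and still has ample Hodge bundle) and then representing the principally polarized $(B\times B^\vee)^4$ as arising from a curve fibration after further base change — base changes of $\PP^1_k$ are themselves $\PP^1$, so Theorem \ref{positivity main} and the ampleness are preserved. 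Once $X''$ with $H^1(\CO_{X''})=0$ is in hand, $T^1(X'')$ holds by hypothesis, hence $\Sha(B)$ is finite, hence $\Sha(A)$ is finite, hence $T^1(X)$ holds, completing the reduction.
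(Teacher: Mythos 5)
Your proposal tracks the paper for Steps 1 and 2: Lefschetz fibration $X'\to\PP^1_k$, pass to the Jacobian $J$ of the generic fiber, invoke Theorem \ref{Brauer Sha} and Lemma \ref{birational} to reduce $T^1(X)$ to $BSD(J)$, then apply Theorem \ref{positivity main} to write $J\sim B\times_K C_K$, dispose of $C_K$ unconditionally, and reduce to $BSD(B)$ where $\overline\Omega_B$ is ample over $\PP^1_k$. Up to this point you and the paper agree.

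The gap is exactly where you flag it: ``produce a projective smooth surface $X''$ fibered over $\PP^1_k$ whose generic-fiber Jacobian is isogenous to $B$ and which has $H^1(X'',\CO_{X''})=0$.'' There is no construction offered, and in fact no such construction should be expected: a general abelian variety $B$ over $K=k(t)$ is not (isogenous to) the Jacobian of any curve over $K$, and even when it is, there is no direct way to control $R^1\pi_*\CO$ of a fibered surface purely from $\overline\Omega_B$. You correctly intuit that the Leray spectral sequence and the ampleness of $\overline\Omega_B$ should give the vanishing of $H^1(\CO)$, but the object to which that intuition applies rigorously is not a surface fibered in curves: it is a projective regular model of $B$ itself. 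The requirement ``Jacobian isogenous to $B$'' is also stronger than what Theorem \ref{Brauer Sha} actually needs; you only need $B$ to be an isogeny \emph{factor} of the Jacobian of the surface, since then $BSD$ of the Jacobian implies $BSD(B)$ by Lemma \ref{isogeny}.

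The paper closes the gap differently, and this is the genuine new content of \S\ref{subsection reduction}. One takes a projective, flat, regular model $\psi:\CP\to S=\PP^1_k$ of $B^\vee$ (Theorem \ref{model}, resting on Mumford, Faltings--Chai and K\"unnemann) for which $R^1\psi_*\CO_\CP\cong\Lie(\CB/S)=(\overline\Omega_B)^\vee$ canonically. This is a higher-dimensional regular $k$-variety, not a surface, but the Leray argument applies cleanly: ampleness of $\overline\Omega_B$ gives $H^0(S,R^1\psi_*\CO_\CP)=0$, and since $H^1(S,\CO_S)=0$ one gets $H^1(\CP,\CO_\CP)=0$. Then one applies Poonen's Bertini theorem over finite fields repeatedly to cut $\CP$ down to a smooth $k$-surface $\CY\subset\CP$ for which (i) $H^1(\CY,\CO_\CY)=0$, (ii) $H^1(\CP_\eta,\CO)\to H^1(\CY_\eta,\CO)$ is injective, and (iii) $\CY_\eta$ is smooth; condition (ii) forces $B\to\Jac(\CY_\eta)$ (induced by restriction of Picard functors) to have finite kernel, so $B$ is a direct isogeny factor of $\Jac(\CY_\eta)$. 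Now $T^1(\CY)$, which holds by hypothesis, gives $BSD(\Jac(\CY_\eta))$, hence $BSD(B)$, hence $T^1(X)$. You would need to replace your unconstructed ``$X''$ with Jacobian $\cong B$'' by this Bertini-inside-a-regular-model argument to make the proof go through.
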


\subsection{Idea of proofs} \label{section idea}
 
Here we explain our proofs of the theorems. 

\subsubsection*{Positivity of Hodge bundle} 
Theorem \ref{positivity main} is the main theorem, and its proof takes up the whole \S \ref{section positivity}. The proof consists of three major steps. 

The first step is to construct an infinite chain of abelian varieties. 
Namely, if the Hodge bundle $\OO_A=\OO_{\CA/S}$ of $A$ is not ample, then the dual $\Lie(\CA/S)$ has a nonzero \emph{maximal nef subbundle}
$\Lie(\CA/S)_\nef$. We prove that it is always a $p$-Lie algebra.
Applying the Lie theory of finite and flat group schemes developed in \cite{SGA3}, we can lift $\Lie(\CA/S)_\nef$ to a finite and flat subgroup scheme $\CA[F]_\nef$ of $\CA$ of height one. 
Then we form the quotient $\CA_1'=\CA/\CA[F]_\nef$, and let $\CA_1$ be the N\'eron model of the generic fiber of $\CA_1'$. 
If the Hodge bundle of $\CA_1$ is still not ample, repeat the construction to get $\CA_2'$ and $\CA_2$. 
Keep repeating the process, we get an infinite sequence 
$$\CA,\  \CA_1',\  \CA_1,\  \CA_2',\  \CA_2,\  \CA_3',\  \CA_3,\ \cdots.$$ 

The second step is to use heights to force the sequence to be stationary in some sense. In fact, the height of the sequence is decreasing, which is a key property proved by the construction. 
As mentioned above, the heights are non-negative integers, so the sequence of the heights is eventually constant. 
This implies in particular that there is $n_0$ such that for any $n\geq n_0$, $\Lie(\CA_n/S)_\nef$ is the base change of a $p$-Lie algebra from the base $k$, and $\CA_n[F]_\nef$ is eventually the base change of a group scheme from the base $k$.
We say such group schemes over $S$ are \emph{of constant type}. 
As a consequence, the kernels of $\CA_{n_0}\to \CA_n$ as $n$ varies give a direct systems of group schemes over $S$ of constant type. 
With some argument, we can convert this direct system into a 
$p$-divisible subgroup $\CH_\infty$ of $\CA_{n_0}[p^\infty]$ of constant type. For simplicity of notations, we assume $\CA_{n_0}$ is just $\CA$ in the following.

The third step is to ``lift'' the $p$-divisible subgroup $\CH_\infty$ of $\CA[p^\infty]$ to an abelian subscheme of $\CA$ of constant type.
By passing to a finite extension of $k$, we can find a point $s\in S(k)$ such that the fiber $C=\CA_s$ is an abelian variety over $k$.
Since $\CH_\infty$ is of constant type, it is also a $p$-divisible subgroup of $C_S[p^\infty]$.
It follows that $\CA$ and $C_S$ ``share'' the same $p$-divisible subgroup$\CH_\infty$. 
This would eventually imply that $A$ has a non-trivial $(K/k)$-trace by some fundamental theorems.
In fact, $C[p^\infty]$ is semisimple (up to isogeny) as the $p$-adic version of Tate's isogeny theorem, and thus $\CH_{\infty,K}$ is a direct summand of $C[p^\infty]$ up to isogeny. This implies that $\Hom(C_K[p^\infty], A[p^\infty])\neq 0$. 
By a theorem of de Jong \cite{Jon}, this implies that 
$\Hom(C_K, A)\neq 0$. 
Then $A$ has a non-trivial $(K/k)$-trace.
The proof is finished by applying the same process to the quotient of $A$ by the image of the $(K/k)$-trace map.

\subsubsection*{Partial finiteness} 

Theorem \ref{ML main} is an easy consequence of Theorem \ref{positivity main} and R\"ossler \cite[Thm. 1.1]{Ros1}, as mentioned above.
Theorem \ref{finiteness main} will be proved in \S \ref{section partial}. The proof is inspired by that of R\"ossler \cite[Thm. 1.1]{Ros1}, which is in turn derived from an idea of Kim \cite{Kim}.

To illustrate the idea, we first assume that $A$ is an elliptic curve with semi-abelian reduction over $S$.
Take an element $X\in \Sha(A)[F^\infty]$, viewed as an $A$-torsor over $K$.
Take a closed point $P\in X$ which is purely inseparable over $K$. It exists because $X$ is annihilated by a power of the relative Frobenius. 
Denote by $p^n$ the degree of the structure map $\psi_K: P\to \Spec K$.
Assume that $n\geq 1$.
It suffices to bound $n$ in terms of $A$.

Consider the canonical composition
$$\psi_K^* \OO_{A}\lra \Omega_{X/K}^1|_P \lra \Omega_{P/K}^1 \lra \Omega_{P/k}^1.$$ 
The first map is induced by the torsor isomorphism $X\times_KP\to A\times_KP$, and it is an isomorphism. 
The second map is surjective. 
The third map is bijective since $P$ is purely inseparable of degree $p^n$ over $K$.
We are going to extend the maps to integral models. 

Denote by $\CP$ the unique projective and smooth curve over $k$ with generic point $P$, and $\psi:\CP\to S$ be the natural map derived from $\psi_K$. 
Abstractly $\CP$ is isomorphic to $S$ since $\psi$ is purely inseparable.
By considering the minimal regular projective models of $X$ and $A$ over $S$, one can prove that
the above composition extends to a morphism
$$
\psi^* \OO_{\CA/S} \lra  \Omega_{\CP/k}^1(E).
$$
Here $\CA$ is the N\'eron model of $A$ over $S$, $E$ is the reduced structure of $\psi^{-1}(E_0)$, and $E_0$ is the set of closed points of $S$ at which $A$ has bad reduction.  
The morphism is a nonzero morphism of line bundles over $\CP$, so it is necessarily injective. The degrees on $\CP$ give
$$
p^n\cdot \deg(\OO_{\CA/S})=\deg(\psi^* \OO_{\CA/S}) \leq \deg(\Omega_{\CP/k}^1(E))=\deg(E_0)+2g-2.
$$
Here $g$ is the genus of $S$.
If $\deg(\OO_{\CA/S})> 0$, then $n$ is bounded. 
It proves the theorem in this case. 

The proof for general dimensions is based on the above strategy with 
two new ingredients. 
First, there is no minimal regular model for $A$. 
The solution is to use the compactification of Faltings--Chai \cite{FC}. 
This is the major technical part of the proof. 
Second, the Hodge bundle is a vector bundle, and we require the ampleness of the whole vector bundle.

\subsubsection*{Reduction of Tate conjecture} 

Theorem \ref{reduction main} will be proved in \S \ref{section reduction}.
One key idea is to repeatedly apply the Artin--Tate theorem, which asserts that for a reasonable fibered surface $\pi:X\to S$, the Tate conjecture $T^1(X)$ is equivalent to the BSD conjecture for the Jacobian variety $J$ of the generic fiber of $\pi$. By this, we can switch between projective and smooth surfaces over finite fields and abelian varieties over global function fields. 

As we can see from \S \ref{subsection reduction}, the major part of the proof consists of 4 steps. We describe them briefly in the following.

\medskip

\emph{Step 1: Make a fibration}.
Take a Lefschetz pencil over $X$, whose existence (over a finite base field) is proved by Nguyen \cite{Ngu}.
By blowing-up $X$, we get a Lefschetz fibration $\pi:X'\to S$ with $S=\PP^1_k$.
Denote by $J$ the Jacobian variety of the generic fiber of $\pi:X'\to S$, which is an abelian variety over $K=k(t)$ with everywhere semi-abelian reduction over $S$. 
In particular, $T^1(X)$ is equivalent to $T^1(X')$, and $T^1(X')$ is equivalent to $BSD(J)$. 

\medskip

\emph{Step 2: Make the Hodge bundle positive.}
Apply Theorem \ref{positivity main} to $J$. 
Then $J$ is isogenous to 
$A\times_{K} C_{K}$, where $C$ is an abelian variety over $k$, and $A$ is an abelian variety over $K$ with an ample Hodge bundle over 
$S$. It is easy to check that $BSD(C_K)$ holds unconditionally. 
Therefore, $BSD(J)$ is equivalent to $BSD(A)$.

\medskip

\emph{Step 3: Take a projective regular model}.
We need nice projective integral models of abelian varieties over global function fields.
This is solved by the powerful theory of Mumford \cite{Mum1} and 
Faltings--Chai \cite{FC} with some refinement by K\"unnemann \cite{Kun}.
As a result, there is a projective, flat and regular integral model $\psi:\CP\to S$ of $A^\vee\to\Spec K$ with a canonical isomorphism  
$
R^1\psi_* \CO_\CP\to \OO_A^\vee.
$
This forces $H^0(S, R^1\psi_* \CO_\CP)=0$ by the ampleness of 
$\OO_A$. 
By the Leray spectral sequence, we have 
$H^1(\CP, \CO_\CP)=0.$
This is the very reason why the positivity of the Hodge bundle is related to the vanishing of $H^1$. 

\medskip

\emph{Step 4: Take a surface in the regular model}.
By successively applying the Bertini-type theorem of Poonen \cite{Poo}, we can find a projective and smooth $k$-surface $\CY$ in $\CP$ satisfying the following conditions:
\begin{enumerate}[(1)]
\item $H^1(\CY, \CO_\CY)=0$.
\item The canonical map 
$H^1(\CP_\eta, \CO_{\CP_\eta})\to H^1(\CY_\eta, \CO_{\CY_\eta})$ is injective.
\item The generic fiber $\CY_\eta$ of $\CY\to S$ is smooth. 
\end{enumerate}
Here $\eta=\Spec K$ denotes the generic point of $S$.
Denote by $B$ the Jacobian variety of $\CY_\eta$ over $\eta$. 
Consider the homomorphism 
$A\to B$ induced by the natural homomorphism 
$\underline{\Pic}_{\CP_\eta/\eta}\to \underline{\Pic}_{\CY_\eta/\eta}$.
The kernel of $A\to B$ is finite by (2). 
It follows that $BSD(A)$ is implied by
$BSD(B)$.  
By the Artin--Tate theorem again, $BSD(B)$ is equivalent to $T^1(\CY)$. 
Note that
$H^1(\CY, \CO_\CY)=0$.
This finishes the proof of Theorem \ref{reduction main}.

\subsection{Notation and terminology}

For any field $k$, denote by $k^s$ (resp. $\bar k$) the algebraic closure (resp. separable closure). 

By a \emph{variety} over a field, we mean an integral scheme, geometrically integral, separated and of finite type over the field. By a \emph{surface} (resp. \emph{curve}), we mean a variety of dimension two (resp. one). 

We use the following basic notations:
\begin{itemize}
\item $k$ denotes a field of characteristic $p$.
\item $S$ usually denotes a projective, smooth and geometrically integral curve over $k$, which is often $\PP^1_k$.
\item $K=k(S)$ usually denotes the function field of $S$, which is often $k(t)$.  
\item $\eta=\Spec K$ denotes the generic point of $S$.
\end{itemize}
Occasionally we allow $K$ and $S$ to be more general.

\subsubsection*{Frobenius morphisms}

Let $X$ be a scheme over $\FF_p$. 
Denote by $F_X^n: X\to X$ the absolute Frobenius morphism whose induced map on the structure sheaves is given by $a\mapsto a^{p^n}$. 
To avoid confusion, we often write $F_X^n: X\to X$ as $F_X^n: X_n\to X$, so $X_n$ is just a notation for $X$. 
We also write $F^n=F_X^n$ if no confusion will result.

Let $\pi:X\to S$ be a morphism of schemes over $\FF_p$. 
Denote by
$$X^{(p^n)} =X\times_SS=(X,\pi)\times_{S} (S, F_{S}^n),$$
the fiber product of $\pi:X\to S$ with the absolute Frobenius morphism $F_{S}^n: S\to S$. 
Then $X^{(p^n)}$ is viewed as a scheme over $S$ by the projection to the second factor, and the universal property of fiber products gives an $S$-morphism 
$$F_{/S}^n=F_{X/S}^n: X\lra X^{(p^n)},$$
which is the relative $p^n$-Frobenius morphism of $X$ over $S$.
See the following diagram. 
$$
\xymatrix{
X  \ar@/_1pc/[rrdd]_{\pi}  \ar@{-->}[rrd]_{F_{X/S}^n} \ar@/^1pc/[rrrd]^{F_X^n} 
& &
\\
&
&
\quad X^{(p^n)}\quad  \ar[d] \ar[r]
&
\quad X\quad \ar[d]^{\pi}
\\
&
&
\quad S \quad \ar[r]_{F_S^n}
&
\quad S\quad
}
$$
We sometimes also write $F^n$ for $F_{X/S}^n$ if there is no confusion.

\subsubsection*{Relative Tate--Shafarevich group}
Let $K$ be a global function field, let $f:A\to B$ be a homomorphism of abelian varieties over $K$. 
Denote
$$
\Sha(A)[f]= \ker(\Sha(f): \Sha(A)\to \Sha(B)).
$$
In the case of the relative Frobenius morphism, 
$$
\Sha(A)[F^n]= \ker(\Sha(F^n): \Sha(A)\to \Sha(A^{(p^n)})).
$$
Denote 
$$\Sha(A)[F^\infty]=\bigcup_{n\geq 1} \Sha(A)[F^n]$$
as a subgroup of $\Sha(A)$.

In the setting of $f:A\to B$, we also denote
$$
A[f]=\ker(f:A\to B),
$$
viewed as a group scheme over $K$. It is often non-reduced in this paper.

\subsubsection*{Radiciel morphisms}

By \cite[I, \S 3.5]{EGA}, a morphism $f:X\to Y$ of schemes is called \emph{radicial} if one of the following equivalent conditions holds:
\begin{enumerate}[(1)]
\item the induced map $X(L)\to Y(L)$ is injective for any field $L$.
\item $f$ is universally injective; i.e., any base change of $f$ is injective on the underlying topological spaces. 
\item $f$ is injective on the underlying topological spaces, and for any $x\in X$, the induced extension $k(x)/k(f(x))$ of the residue fields is purely inseparable.
\end{enumerate}
Such properties are stable under compositions, products and base changes.

\subsubsection*{Vector bundles}

By a vector bundle on a scheme, we mean a locally free sheaf of finite rank. By a line bundle on a scheme, we mean a locally free sheaf of rank one.

\subsubsection*{Cohomology}

Most cohomologies in this paper are \'etale cohomology, if there are no specific explanations. 
We may move between different cohomology theories, and the situation will be explained time to time.

\subsubsection*{Acknowledgment}

The author is indebted to Brian Conrad for verifying some proofs in an early version of the paper. The author would also like to thank Klaus K\"unnemann, Damian R\"ossler, Yichao Tian, and Shou-Wu Zhang for important communications. 

Some part of the paper was written when the author visited China in the winter of 2017. The author would like to thank the hospitality of the Morningside Center of Mathematics, Peking University and Wuhan University. 

The author is supported by two grants (DMS-1601943, RTG/DMS-1646385) from the National Science Foundation of the USA.

\section{Positivity of Hodge bundle} \label{section positivity}

The goal of this section is to prove Theorem \ref{positivity main}.
As sketched in \S \ref{section idea}, the proof consists of three major steps. 
Each of these steps takes a subsection in \S \ref{section quotient}, 
\S \ref{section height} and \S \ref{section divisible}.
Before them, we introduce some basic results about group schemes of constant types in \S \ref{section constant}.

\subsection{Group schemes of constant type} \label{section constant}

Here we collect some basic results about group schemes to be used later. 

\subsubsection*{$p$-Lie algebras and group schemes}

Here we recall the infinitesimal Lie theory of \cite[VII$_A$]{SGA3}. 
For simplicity, we only restrict to the commutative case here. 
Let $S$ be a noetherian scheme over $\FF_p$. 
Recall that a \emph{commutative $p$-Lie algebra} over $S$ is a coherent sheaf $\mathfrak g$ on $S$, endowed with an additive morphism
$$\mathfrak g \lra \mathfrak g,\quad \delta\longmapsto \delta^{[p]}$$ 
which is \emph{$p$-linear} in the sense that 
$$(a\delta)^{[p]}=a^p\delta^{[p]}, \quad
a\in \CO_S,\ \delta\in \mathfrak g. $$ 
The additive morphism is called \emph{the $p$-th power map on $\mathfrak g$}.
We say that $\fg$ is \emph{locally free} if it is locally free as an $\CO_S$-module. 

We can interpret the $p$-th power map on $\fg$ as an $\CO_S$-linear map as follows. Recall the absolute Frobenius morphism
$F_S: S\to S$. The pull-back $F_S^*\fg$ 
is still a vector bundle on $S$.
The additive map 
$$
F_S^*: \fg\lra F_S^*\fg
$$  
is $p$-linear in that $F_S^*(a\delta)=a^pF_S^*\delta.$
It follows that we have a well-defined $\CO_S$-linear map given by 
$$
F_S^*\fg \lra  \fg, \quad 
F_S^*\delta \longmapsto \delta^{[p]}.
$$

For a commutative group scheme $\CG$ over $S$, the $\CO_S$-module
$\Lie(\CG/S)$ of invariant derivations on $\CG$ is a natural commutative $p$-Lie algebra over $S$. 
By \cite[VII$_A$, Thm. 7.2, Thm. 7.4, Rem. 7.5]{SGA3}, the functor $\CG\mapsto \Lie(\CG/S)$ is an equivalence between the following two categories:
\begin{itemize}
\item[(1)] the category of finite and flat commutative group schemes of height one over $S$,
\item[(2)] the category of locally free commutative $p$-Lie algebras over $S$. 
\end{itemize}
Here a group scheme $\CG$ over $S$ is of \emph{height one} if the relative Frobenius morphism $F_{\CG/S}:\CG\to \CG^{(p)}$ is zero. 
Furthermore, if $\CG$ is in the first category,  
then $\OO_{\CG/S}=e^*\Omega_{\CG/S}$ and $\Lie(\CG/S)$ are locally free and canonically dual to each other. 
Here $e:S\to \CG$ is the identity section.
See \cite[VII$_A$, Prop. 5.5.3]{SGA3}.

For some treatments in special cases, see \cite[\S15]{Mum2} for the case that $S$ is the spectrum of an algebraically closed field, and \cite[\S A.7]{CGP} for the case that $S$ is affine.

\subsubsection*{Group schemes of constant type}

The results below, except Lemma \ref{constant1}(1), also hold in characteristic zero. We restrict to positive characteristics for simplicity.

Let $S$ be a scheme over a field $k$ of characteristic $p>0$. 
A group scheme (resp. scheme, coherent sheaf, $p$-Lie algebra, $p$-divisible group) $\CG$ over $S$ is called \emph{of constant type over $S$} if it is isomorphic to the base change (resp. base change, pull-back, pull-back, base change) $G_S$ by $S\to \Spec k$ of some group scheme 
(resp. scheme, coherent sheaf, $p$-Lie algebra, $p$-divisible group) $G$ over $k$. 
Note that a finite flat group scheme of height one over $S$ is of constant type if and only if its $p$-Lie algebra is of constant type. 

It is also reasonable to use the term ``constant'' instead of ``of constant type'' in the above definition. However, a ``constant group scheme'' usually means a group scheme associated to an abelian group in the literature, so we choose the current terminology to avoid confusion. 

\begin{lem} \label{constant1}
Let $S$ be a Noetherian scheme over a field $k$ of characteristic $p>0$ with $\Gamma(S,\CO_S)=k$. 

\begin{itemize}
\item[(1)] Let $\pi:\CG\to S$ be a finite and flat commutative group scheme of height one over $S$. If the $p$-Lie algebra of $\CG$ is of constant type as a coherent sheaf over $S$, then $\CG$ is of constant type as a group scheme over $S$.

\item[(2)] Let $\pi:\CG\to S$ be a finite and flat commutative group scheme over $S$. If $\pi_*\CO_\CG$ is of constant type as a coherent sheaf over $S$, then $\CG$ is of constant type as a group scheme over $S$.

\item[(3)] Let $\pi_1:\CG_1\to S$ and $\pi_2:\CG_2\to S$ be finite and flat commutative group schemes of constant type over $S$. Then any $S$-homomorphism between $\CG_1$ and $\CG_2$ is of constant type, i.e., equal to the base change of a unique $k$-homomorphism between the corresponding group schemes over $k$. 
\end{itemize}
\end{lem}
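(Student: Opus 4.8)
The statement of Lemma \ref{constant1} has three parts, and I will indicate a strategy for each, since the hypothesis $\Gamma(S,\CO_S)=k$ is the common thread that makes ``descent to $k$'' work.

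For part (1), the plan is to exploit the equivalence of categories between finite flat commutative group schemes of height one over $S$ and locally free commutative $p$-Lie algebras over $S$, recalled above from \cite[VII$_A$]{SGA3}. Let $\fg=\Lie(\CG/S)$, and suppose $\fg\simeq \fg_{0,S}$ for a coherent sheaf $\fg_0$ on $\Spec k$, i.e. a finite-dimensional $k$-vector space. The only extra data on $\fg$ beyond the $\CO_S$-module structure is the $p$-th power map, which by the discussion above is equivalently an $\CO_S$-linear map $\varphi\colon F_S^*\fg\to\fg$. Since $\fg$ and $F_S^*\fg$ are both of constant type (pullback of a vector space commutes with $F_S$ in the appropriate sense), giving $\varphi$ is the same as giving an element of
$$\Hom_{\CO_S}(F_S^*\fg,\fg)=\Gamma\bigl(S,\, \CHom_{\CO_S}(F_S^*\fg,\fg)\bigr)=\Gamma(S,\CO_S)\otimes_k \Hom_k(\fg_0,\fg_0)=\Hom_k(\fg_0,\fg_0),$$
where the crucial middle identification uses $\Gamma(S,\CO_S)=k$ together with the fact that the internal Hom sheaf is itself of constant type (being the pullback of $\Hom_k(\fg_0,\fg_0)$). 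Thus $\varphi$ descends to a $p$-linear $p$-th power map on $\fg_0$, giving a $p$-Lie algebra structure on $\fg_0$ over $k$; applying the SGA3 equivalence over $k$ produces a height-one group scheme $G/k$ with $G_S\simeq\CG$. The one subtlety is checking that the pull-back of the $p$-th power map of $G$ is precisely $\varphi$, which is exactly the compatibility of the SGA3 equivalence with base change $S\to\Spec k$; this is the point to be careful about, but it is formal.

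For part (2), I would reduce to part (1) via Frobenius filtrations. Write $\CG_n=\Ker(F^n_{\CG/S}\colon\CG\to\CG^{(p^n)})$; since $\CG$ is finite flat over the Noetherian base, $\CG=\CG_n$ for $n\gg 0$. Each successive quotient $\CG_{i+1}/\CG_i$ (or rather the graded pieces of this filtration, suitably interpreted) is of height one, and its $p$-Lie algebra is a subquotient of $\pi_*\CO_\CG$-related sheaves; the hypothesis that $\pi_*\CO_\CG$ is of constant type should propagate to these graded pieces being of constant type, so by part (1) each is of constant type over $S$. One then needs to assemble the extensions: the extension classes live in groups of the form $\Ext^1_S(\cdot,\cdot)$ computed among constant-type group schemes, and using $\Gamma(S,\CO_S)=k$ one argues these $\Ext$ groups coincide with the corresponding ones over $k$, so the whole tower descends. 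Honestly, the main obstacle in the whole lemma is here: controlling these extensions and verifying that the relevant $\Ext^1$ of group schemes (not just of modules) is insensitive to the base change $S\to\Spec k$ under the hypothesis $\Gamma(S,\CO_S)=k$. An alternative cleaner route is to observe that $\CG=\Spec_S(\pi_*\CO_\CG)$, and if $\pi_*\CO_\CG\simeq(A_0)_S$ as an $\CO_S$-algebra-with-comultiplication, then all the structure maps (multiplication, comultiplication, antipode, unit) are morphisms between constant-type coherent sheaves, hence by the $\Gamma(S,\CO_S)=k$ argument descend to $k$; this avoids Frobenius filtrations entirely and is probably the approach to take. The point to verify is that an isomorphism of coherent sheaves $\pi_*\CO_\CG\simeq(A_0)_S$ can be chosen compatibly with enough structure, or that once it exists as sheaves the algebra/Hopf structure automatically descends — this is again a $\Hom$-group computation using $\Gamma(S,\CO_S)=k$.

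For part (3), this is the cleanest: write $\CG_i=(G_i)_S$ for $G_i/k$, $i=1,2$. An $S$-homomorphism $f\colon\CG_1\to\CG_2$ is in particular a morphism of the underlying finite flat $S$-schemes, equivalently an $\CO_S$-algebra homomorphism $\pi_{2*}\CO_{\CG_2}\to\pi_{1*}\CO_{\CG_1}$, i.e. an element of
$$\Hom_{\CO_S\text{-alg}}\bigl((A_{0,2})_S,(A_{0,1})_S\bigr).$$
Since $A_{0,2}$ is a finitely generated $k$-algebra (even finite-dimensional as a vector space) and $\Gamma(S,\CO_S)=k$, base change gives $\Hom_{\CO_S\text{-alg}}((A_{0,2})_S,(A_{0,1})_S)=\Hom_{k\text{-alg}}(A_{0,2},A_{0,1})$: an algebra map out of $(A_{0,2})_S$ is determined by the images of a finite set of algebra generators, each of which must land in $\Gamma(S,(A_{0,1})_S)=A_{0,1}$ (using $\Gamma(S,\CO_S)=k$), subject to the same relations. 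Thus $f$ descends uniquely to $f_0\colon A_{0,2}\to A_{0,1}$, i.e. to a $k$-morphism $G_1\to G_2$. Finally, the conditions that $f_0$ respect the group laws (comultiplication, unit) are equalities of morphisms of constant-type schemes over $S$, hence hold over $S$ if and only if they hold over $k$, again by the injectivity $\Hom_k(\cdot,\cdot)\hookrightarrow\Hom_S((\cdot)_S,(\cdot)_S)$. Uniqueness follows from this same injectivity. I expect parts (1) and (3) to be short, with the real content — and the only place the argument could get technical — concentrated in making part (2) precise.
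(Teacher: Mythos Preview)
Your proposal is correct and matches the paper's approach in all three parts. For (1) and (3) you have exactly the paper's argument: descend the relevant linear data (the $p$-th power map $F_S^*\fg\to\fg$, resp.\ the Hopf-algebra map $(\pi_2)_*\CO_{\CG_2}\to(\pi_1)_*\CO_{\CG_1}$) by computing the relevant $\Hom_{\CO_S}$ group as global sections of a constant-type sheaf and invoking $\Gamma(S,\CO_S)=k$. For (2), your ``alternative cleaner route'' is precisely what the paper does: set $\CF=\pi_*\CO_\CG$, $\CF_0=\Gamma(S,\CF)$, and descend the unit, multiplication, counit, comultiplication, and antipode one by one via the same $\Hom$ calculation (e.g.\ $\Hom_{\CO_S}(\CF,\CF\otimes_{\CO_S}\CF)=\Hom_k(\CF_0,\CF_0\otimes_k\CF_0)$), with the Hopf-algebra axioms on $\CF_0$ inherited from those on $\CF$ via $\CF_0=\Gamma(S,\CF)$. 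Your instinct to abandon the Frobenius-filtration approach is right: controlling $\Ext^1$ of group schemes under base change is genuinely harder (the paper needs a separate lemma and the specific hypothesis $S=\PP^1_k$ to handle such extensions later), whereas the direct Hopf-algebra descent needs only $\Gamma(S,\CO_S)=k$. One small slip: in your displayed $\Hom$ computation for (1) the target should be $\Hom_k(F_k^*\fg_0,\fg_0)$ rather than $\Hom_k(\fg_0,\fg_0)$, but this does not affect the argument.
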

\begin{proof}
We first prove (1). 
Denote $\fg=\Lie(\CG/S)$ and $\fg_0=\Gamma(S, \fg)$. 
By assumption, the canonical morphism 
$$
\fg_0 \otimes_k \CO_S \lra \fg
$$
is an isomorphism of $\CO_S$-modules. 
It suffices to prove that the $p$-th power map of $\fg$ comes from 
a $p$-th power map of $\fg_0$. 
Note that $\fg_0$ has a canonical $p$-th power map coming from global sections of $\fg$, but we do not need this fact. 

Note that the $p$-th power map of $\fg$ is equivalent to an $\CO_S$-linear map 
$F_S^*\fg \to  \fg$.
It is an element of 
\begin{multline*}
\Hom_{\CO_S}(F_S^*\fg,\fg)
=\Gamma(S, F_S^*(\fg^\vee)\otimes_{\CO_S}\fg) \\
=\Gamma(S, (F_k^*(\fg_0^\vee)\otimes_k\fg_0)\otimes_k{\CO_S})
=F_k^*(\fg_0^\vee)\otimes_k\fg_0
=\Hom_{k}(F_k^*\fg_0,\fg_0).
\end{multline*}
In other words, it is the base change of a $p$-th power map of $\fg_0$.
This proves (1).

The proof of (2) is similar. 
In fact, denote $\CF=\pi_*\CO_\CG$ and $\CF_0=\Gamma(S, \pi_*\CO_\CG)$.
The canonical morphism 
$$
\CF_0 \otimes_k \CO_S \lra \CF
$$
is an isomorphism of $\CO_S$-modules. 
Note that the structure of $\CG$ as a group scheme over $S$ is equivalent to a structure of $\CF$ as a Hopf $\CO_S$-algebra. 
For these, the extra data on $\CF$ consist of an identity map 
$\CO_S\to \CF$, a multiplication map 
$\CF\otimes_{\CO_S}\CF \to \CF$, a co-identity map $\CF\to \CO_S$, a co-multiplication map $\CF \to \CF\otimes_{\CO_S}\CF$, and an inverse map $\CF \to \CF$. 
There are many compatibility conditions on these maps. 
All these maps are $\CO_S$-linear. 
We claim that all these maps are coming from similar maps on $\CF_0$.
For example, the co-multiplication map is an element of 
\begin{multline*}
\Hom_{\CO_S}(\CF, \CF\otimes_{\CO_S}\CF)
=\Gamma(S, \CF^\vee\otimes_{\CO_S}\CF\otimes_{\CO_S}\CF) \\
=\Gamma(S, (\CF_0^\vee\otimes_k\CF_0\otimes_k\CF_0)\otimes_k{\CO_S})
=\CF_0^\vee\otimes_k\CF_0\otimes_k\CF_0
=\Hom_{k}(\CF_0, \CF_0\otimes_k\CF_0).
\end{multline*}
This makes $\CF_0$ a Hopf $k$-algebra, since the compatibility conditions hold by $\CF_0=\Gamma(S,\CF)$.   
Finally, the Hopf algebra $\CF$ is the base change of the Hopf algebra $\CF_0$. 
Then the group scheme $\CG$ is the base change of the group scheme corresponding to the 
Hopf algebra $\CF_0$. 

The proof of (3) is similar by looking at 
$\Hom_{\CO_S}((\pi_2)_*\CO_{\CG_2},(\pi_1)_*\CO_{\CG_1})$ with compatibility conditions. 
\end{proof}

The following result will be used for several times. 

\begin{lem} \label{constant2}
Denote $S=\PP_k^1$ for any field $k$ of characteristic $p>0$. 
Let 
$$
0 \lra \CG_1\lra \CG\lra \CG_2\lra 0
$$ 
be an exact sequence of finite and flat commutative group schemes over $S$. If $\CG_1$ and $\CG_2$ are of constant type, then $\CG$ is of constant type. 
\end{lem}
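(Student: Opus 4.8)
The plan is to reduce the statement for a general extension $0\to\CG_1\to\CG\to\CG_2\to0$ to the two cases already under control: finite flat group schemes of height one (handled by Lemma \ref{constant1}(1)) and, more generally, finite flat group schemes whose pushforward sheaf is of constant type (Lemma \ref{constant1}(2)). The key global input is that $\Gamma(\PP^1_k,\CO_{\PP^1_k})=k$, so Lemma \ref{constant1} applies with $S=\PP^1_k$; the crux will be to show that $\pi_*\CO_\CG$ is of constant type as a coherent sheaf on $\PP^1_k$, after which Lemma \ref{constant1}(2) finishes the job.

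First I would set up the sheaf-theoretic picture. Writing $\pi,\pi_1,\pi_2$ for the structure maps to $S=\PP^1_k$, the surjection $\CG\to\CG_2$ is finite flat, so $\CG$ is an affine $\CG_2$-scheme; equivalently $\pi_*\CO_\CG$ is a sheaf of $\pi_{2*}\CO_{\CG_2}$-algebras, locally free of finite rank over it. By the flatness of all three group schemes, $\pi_*\CO_\CG$ is locally free over $\CO_S$ of rank equal to $(\mathrm{rk}\,\pi_{1*}\CO_{\CG_1})(\mathrm{rk}\,\pi_{2*}\CO_{\CG_2})$. So the real content is to identify the $\CO_S$-module structure, and then promote the identification to the group-scheme level via Lemma \ref{constant1}(2). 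To pin down the module structure, I would use the Birkhoff--Grothendieck theorem: every vector bundle on $\PP^1_k$ splits as a sum of line bundles $\CO(d_i)$, and a vector bundle is of constant type (pulled back from $\Spec k$) precisely when all $d_i=0$, i.e.\ when it is trivial. So it suffices to prove that $\pi_*\CO_\CG$ is the trivial bundle on $\PP^1_k$, equivalently that it and its dual have enough global sections, equivalently that $H^0(\PP^1,\pi_*\CO_\CG)\otimes_k\CO_S\to\pi_*\CO_\CG$ is an isomorphism.

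Here is the mechanism I expect to use. Since $\CG_2$ is of constant type, $\pi_{2*}\CO_{\CG_2}=\CF_{2,0}\otimes_k\CO_S$ for a finite-dimensional $k$-algebra $\CF_{2,0}$; in particular $\pi_{2*}\CO_{\CG_2}$ is a trivial bundle. Now $\pi_*\CO_\CG$ is a locally free module over this trivial bundle of algebras, corresponding to a finite locally free sheaf on the finite $S$-scheme $\CG_2=(\Spec\CF_{2,0})\times_k S$. After a faithfully flat finite base change trivializing $\CF_{2,0}$ (or working component-by-component over the Artinian rings occurring in $\CF_{2,0}$, using that $\CG_1$ of constant type means each ``fiber-direction'' module is constant), the torsor-like structure of $\CG$ over $\CG_2$ with fiber $\CG_1$ should force $\pi_*\CO_\CG$ to be, as an $\CO_S$-module, a successive extension of trivial bundles — but extensions of trivial bundles on $\PP^1$ need not be trivial (e.g.\ $\CO(-1)\oplus\CO(1)$ is an extension of $\CO$ by $\CO$... no — by $\CO(1)$; but one does need an argument). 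The clean way to exclude twisting is: the exact sequence $0\to\CG_1\to\CG\to\CG_2\to0$ of fppf sheaves gives $\CG$ the structure of a $\CG_1$-torsor over $\CG_2$, and since $\CG_1$ is finite flat and commutative this torsor is classified by $H^1_{\mathrm{fppf}}(\CG_2,\CG_1)$; I would argue that the induced $\CO_S$-module $\pi_*\CO_\CG$ is filtered with graded pieces $\pi_{2*}\CO_{\CG_2}\otimes_{\CO_S}(\text{constant bundles})$, all trivial, and that \emph{on $\PP^1$} any such filtration of a bundle by trivial sub- and quotient-bundles in which the extension classes live in $\mathrm{Ext}^1_{\CO_{\PP^1}}(\CO,\CO)=H^1(\PP^1,\CO)=0$ must split — this is the point where $S=\PP^1_k$ (genus $0$, or the vanishing $H^1(\PP^1,\CO)=0$) is genuinely used, and it is exactly the hypothesis built into Lemma \ref{constant2}.

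Once $\pi_*\CO_\CG$ is known to be a trivial $\CO_S$-module, hence of constant type as a coherent sheaf, Lemma \ref{constant1}(2) applies verbatim: $\CG$ is of constant type as a group scheme over $S$, which is the assertion. I would close by recording the compatible $k$-forms: the $k$-group schemes $G_1,G,G_2$ obtained from Lemma \ref{constant1} fit into an exact sequence over $k$ whose base change is the given one, since by Lemma \ref{constant1}(3) the homomorphisms $\CG_1\to\CG$ and $\CG\to\CG_2$ are themselves of constant type, and exactness can be checked after the faithfully flat base change $S\to\Spec k$.

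\medskip

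\textbf{Main obstacle.} The delicate step is showing $\pi_*\CO_\CG$ is a \emph{trivial} bundle on $\PP^1_k$ — i.e.\ that the extension of group schemes does not introduce any nontrivial twisting at the level of the underlying vector bundle. Exactness alone gives a filtration with constant-type graded pieces, but a filtered bundle on a general curve with trivial graded pieces need not be trivial; the argument must exploit that on $\PP^1$ one can either invoke $H^1(\PP^1,\CO)=0$ to split the Hopf-algebra filtration, or more robustly use Birkhoff--Grothendieck together with the fact that $\pi_*\CO_\CG$ and its $\CO_S$-dual both acquire enough global sections from $H^0$ of the constant pieces. Making this rigorous — in particular handling the non-split case $\CG_2$ non-étale, where $\pi_{2*}\CO_{\CG_2}$ is an algebra with nilpotents and the ``torsor'' picture is infinitesimal rather than étale-locally trivial — is the heart of the proof; everything else is bookkeeping via Lemma \ref{constant1}.
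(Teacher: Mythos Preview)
Your reduction to Lemma \ref{constant1}(2) is exactly the paper's first move, and you correctly identify the torsor picture: $\CG$ is a $\CG_1$-torsor over $\CG_2$, classified by a class in $H^1_{\fppf}(\CG_2,\CG_1)$. But the step you yourself flag as the ``main obstacle'' is a genuine gap. You assert that $\pi_*\CO_\CG$ carries a filtration with trivial graded pieces, after which $H^1(\PP^1,\CO)=0$ would split it; however you never construct such a filtration, and there is no evident one coming from the torsor structure alone. (A locally free module over $\pi_{2*}\CO_{\CG_2}=R_2\otimes_k\CO_S$ is not automatically of the form $M\otimes_k\CO_S$: for $R_2=k\times k$ this is just a pair of line bundles on $\PP^1$.) So as written, the proof stops precisely at the hard part.

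The paper closes this gap not by producing a filtration of the bundle, but by computing the torsor cohomology directly. After reducing to $k$ algebraically closed, one uses induction on $G_1$ to assume $G_1\in\{\ZZ/\ell\ZZ,\ \ZZ/p\ZZ,\ \mu_p,\ \alpha_p\}$, embeds $G_1$ into $\GG_a$ or $\GG_m$ via the Artin--Schreier or Kummer sequence, and then checks by hand that the map
\[
H^1_{\fppf}(G_2,G_1)\lra H^1_{\fppf}(\CG_2,G_1)
\]
is an isomorphism. This amounts to computing $H^i(\CG_2,\GG_a)$ and $H^i(\CG_2,\GG_m)$ for $i=0,1$, which one does using $\CG_2=G_2\times_k\PP^1$, the vanishing $H^1(\PP^1,\CO)=0$, and (for $\GG_m$) the nilpotent filtration $1+\CI\supset 1+\CI^2\supset\cdots$ on the units. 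The conclusion is that the torsor $\CG\to\CG_2$ is pulled back from a torsor over $G_2$, hence $\CG$ is isomorphic to $G_1\times_k\CG_2$ as a $\CG_2$-scheme, so $\pi_*\CO_\CG$ is visibly trivial. Your use of $H^1(\PP^1,\CO)=0$ is morally the same input, but it enters the paper's argument through these cohomology computations rather than through an unspecified bundle filtration.
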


\begin{proof}
By Lemma \ref{constant1}(2), it suffices to prove that $\pi_*\CO_{\CG}$ is of constant type as a coherent $\CO_S$-module. Here $\pi:\CG\to S$ is the structure morphism. 
We can assume that $k$ is algebraically closed.
In fact, the property that the canonical map
$$
\Gamma(S, \pi_*\CO_{\CG}) \otimes_k \CO_S \lra \pi_*\CO_{\CG}
$$
is an isomorphism can be descended from the algebraic closure of $k$ to $k$.

Once $k$ is algebraically closed, any finite group scheme over $k$ is a successive extension of group schemes in the following list:
$$
\ZZ/\ell\ZZ, \quad
\ZZ/p\ZZ,\quad
\mu_p,\quad
\alpha_p.
$$
Here $\ell\neq p$ is any prime, and $\ZZ/\ell\ZZ$ is isomorphic to $\mu_\ell$.

For $i=1,2$, write $\CG_i=G_i\times_k S$ for a finite group scheme $G_i$ over $k$. 
By induction, we can assume that $G_1$ is one of the four group schemes over $k$ in the list.
View $\CG$ as a $G_1$-torsor over $\CG_2$. 
Then $\CG$ corresponds to a cohomology class in the fppf cohomology group $H^1_\fppf(\CG_2,G_1)$.
We first claim that the natural map 
$$
H^1_\fppf(G_2, G_1) \lra H^1_\fppf(\CG_2,G_1)
$$
is an isomorphism. 
If this holds, then $\CG$ is a trivial torsor, and thus isomorphic to 
$G_1\times_k\CG_2$ as a $\CG_2$-scheme. In particular, it is a scheme of constant type over $S$.

It remains to prove the claim that 
the natural map 
$$
H^1_\fppf(G_2, G_1) \lra H^1_\fppf(\CG_2,G_1)
$$
is an isomorphism. 
Note the basic exact sequences
$$
0\lra \alpha_p \lra \GG_a \stackrel{F}{\lra} \GG_a \lra 0,
$$
$$
0\lra \ZZ/p\ZZ \lra \GG_a \stackrel{1-F}{\lra} \GG_a \lra 0,
$$
$$
0\lra \mu_\ell \lra \GG_m \stackrel{[\ell]}{\lra} \GG_m \lra 0.
$$
In the last one, $\ell=p$ is allowed.
Then the claim is a consequence of explicit expressions on the relevant cohomology groups of $\GG_a$ and $\GG_m$ over $\CG_2$ and $G_2$.

Now we compute the cohomology groups of $\GG_a$ and $\GG_m$ over $\CG_2$ and $G_2$.
Write $R=\Gamma(G_2,\CO_{G_2})$. 
We first have 
$$
H^i_\fppf(\CG_2, \GG_a)=H^i_\zar(\CG_2, \CO_{\CG_2})
=H^i_\zar(S, R\times_k\CO_S)
=R\times_k H^i_\zar(S, \CO_S).
$$
This gives
$$
H^0_\fppf(\CG_2, \GG_a)=R, \quad H^1_\fppf(\CG_2, \GG_a)=0,\quad
H^0_\fppf(\CG_2, \GG_m)
=R^\times.
$$
To compute $H^1_\fppf(\CG_2, \GG_m)=H^1_\et(\CG_2, \GG_m)$, denote by
$I=\ker(R\to R_{\red})$ the nilradical ideal of $R$, 
and by $\CI=\ker(\CO_{\CG_2}\to \CO_{(\CG_2)_\red})$ the
the nilradical ideal sheaf of $\CG_2$. 
We have $\CI= I\otimes_k \CO_S$. 
There is an exact sequence of \'etale sheaves over $\CG_2$ by
$$
0\lra 1+\CI \lra \GG_{m,\CG_2} \lra \GG_{m,(\CG_2)_\red} \lra 0.
$$
Moreover, $1+\CI$ has a filtration 
$$
1+\CI\ \supset \ 1+\CI^2\ \supset \ 1+\CI^3 \supset \cdots,
$$
whose $m$-th quotient admits an isomorphism given by
$$
(1+\CI^m)/(1+\CI^{m+1}) \lra \CI^m/\CI^{m+1},\quad  1+t\longmapsto t.
$$
Those quotients are coherent sheaves over $\CG_2$. 
Then we have 
\begin{multline*}
H^i_\et (\CG_2, \CI^m/\CI^{m+1})
=H^i_\zar (\CG_2, \CI^m/\CI^{m+1})\\
=H^i_\zar (\CG_2, (I^m/I^{m+1})\otimes_k\CO_S)
=(I^m/I^{m+1})\otimes_k H^i_\zar(S, \CO_S).
\end{multline*}
This vanishes for $i>0$. 
As a consequence, $H^i_\et (\CG_2, 1+\CI)
=0$ for $i>0$. Therefore, 
$$
H^1(\CG_2, \GG_m)
=H^1(\CG_2, \GG_{m,(\CG_2)_\red})
=\Pic((\CG_2)_\red)
\simeq \ZZ^r,
$$
where $r$ is the number of connected components of $\CG_2$.
For the cohomology over $G_2$, similar computations give
$$
H^0_\fppf(G_2, \GG_a)=R, \ H^1_\fppf(G_2, \GG_a)=0,\
H^0_\fppf(G_2, \GG_m)=R^\times,\
H^1_\fppf(G_2, \GG_m)=0.
$$
By these, it is easy to verify the claim. 
\end{proof}

\subsection{The quotient process} \label{section quotient}

The key to the proof of Theorem \ref{positivity main} is a quotient process. 
This quotient process is also introduced by R\"ossler \cite{Ros2}.

Roughly speaking, if the Hodge bundle $\OO_A$ of $A$ is not ample, then we take the maximal nef subbundle of $\Lie(\CA/S)=\OO_A^\vee$, ``lift'' it to a local subgroup scheme of $A$, and take the quotient $A_1$ of $A$ by this subgroup scheme. 
If the Hodge bundle $\OO_{A_1}$ of $A_1$ is still not ample, then we perform the quotient process on $A_1$. Repeat this process. We obtain a sequence $A, A_1,A_2,\cdots$ of abelian varieties over $K$. 
The goal here is to introduce this quotient process. 
We start with some basic notions of vector bundles, Hodge bundles and Lie algebras.

\subsubsection*{Vector bundles on $\PP^1$}

We will introduce some basic terminologies about stability and positivity of vector bundles. 
Since we will only work over $\PP^1$, we would rather introduce them concretely instead of defining the abstract terms. 

By the Birkhoff--Grothendieck theorem (cf. \cite[Thm. 1.3.1]{HL}), any nonzero vector bundle 
$\CE$ on $S=\PP^1$ (over any base field) can be decomposed as 
$$
\CE\simeq \CO(d_1) \oplus 
\CO(d_2) \oplus \cdots \oplus \CO(d_r), 
$$ 
with uniquely determined integers $d_1\geq d_2 \geq \cdots \geq d_r.$

The \emph{slope} of $\CE$ is defined as
$$\mu(\CE)=\frac{\deg(\CE)}{\rank(\CE)} =\frac1r (d_1+ \cdots + d_r).$$
We also have the \emph{maximal slope} and the \emph{minimal slope}
$$\mu_{\max}(\CE)=d_1, \qquad \mu_{\min}(\CE)=d_r.$$
The bundle $\CE$ is \emph{semistable} if 
$\mu_{\max}(\CE)=\mu_{\min}(\CE)$. 

Under this decomposition, $\CE$ is \emph{ample} if and only if $d_r>0$;
$\CE$ is \emph{nef} if and only if $d_r\geq0$. 
Since we are mainly concerned with vector bundles on $\PP^1$, these can be served as our definitions of ampleness and nefness. 

For any nonzero vector bundle 
$\CE$ on $S=\PP^1$, define \emph{the maximal nef subbundle} (or just \emph{the nef part}) of $\CE$ to be 
$$
\CE_\nef=\Im(\Gamma(S,\CE)\otimes_k \CO_S \to \CE).
$$
In terms of the above decomposition, we simply have
$$
\CE_\nef= \oplus_{d_i\geq 0}\CO(d_i).
$$
Note that $\CE_\nef=0$ if and only $\mu_{\max}(\CE)<0$.


Let $\CF$ be a coherent subsheaf of $\CE$, which is automatically a vector bundle on $S$. 
We say that $\CF$ is \emph{saturated} in $\CE$ if the quotient $\CE/\CF$ is torsion-free. Then $\CE/\CF$ is also a vector bundle on $S$. (In the literature, a saturated subsheaf is also called a \emph{subbundle}.)  Denote by $\eta$ the generic point of $S$. The functor $\CF\mapsto \CF_\eta$ is an equivalence of categories from the category of saturated subsheaves of $\CE$ to the category of linear subspaces of $\CE_\eta$. 
The inverse of the functor is $F\mapsto F\cap \CE$, an intersection 
taken in $\CE_\eta$.

\subsubsection*{Hodge bundles}

Let $\CG$ be a group scheme over a scheme $S$.
The \emph{Hodge bundle of $\CG$ over $S$} is the $\CO_S$-module
$$
\overline\Omega_{\CG}=\overline\Omega_{\CG/\CS}
= e^*\Omega_{\CG/\CS}^1,
$$
where $\Omega_{\CG/\CS}^1$ the relative differential sheaf, and
$e:S\to \CG$ denotes the identity section of $\CG$. 

Recall that if $\CG$ is a finite and flat commutative group 
scheme of height one over $S$,  
then $\overline\Omega_{\CG}$ and $\Lie(\CG/S)$ are locally free and canonically dual to each other. 
See \cite[VII$_A$, Prop. 5.5.3]{SGA3}.

The definition particularly applies to N\'eron models of abelian varieties. 
Let $S$ be a connected Dedekind scheme, and $K$ be its function field. 
Let $A$ be an abelian variety over $K$. Then we write 
$$
\OO_A=\OO_\CA=\OO_{\CA/S}. 
$$
Here $\CA$ is the N\'eron model of $A$ over $S$.

For Hodge bundles of smooth integral models of abelian varieties, we have the following well-known interpretation as the sheaf of global differentials. We sketch the idea for lack of a complete reference.

\begin{lem} \label{invariant differential}
Let $S$ be an integral scheme. 
Let $\pi:\CA\to S$ be a smooth connected group scheme whose generic fiber is an abelian variety.
There are canonical isomorphisms
$$
\pi^*\overline\Omega_{\CA/S}\lra \Omega_{\CA/S}^1, \qquad
\overline\Omega_{\CA/S}\lra \pi_*\Omega_{\CA/S}^1.
$$
\end{lem}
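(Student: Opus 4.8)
The statement is the standard fact that for a smooth group scheme the relative cotangent sheaf is the pullback of its restriction along the identity section (the "translation-invariance of differentials"), together with the projection-formula consequence for $\pi_*$. The plan is to construct the first isomorphism by a translation argument and then deduce the second by pushing forward, using that $\pi$ is proper on the generic fiber and more generally that $\pi_*\CO_\CA = \CO_S$.

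\emph{Step 1: the isomorphism $\pi^*\overline\Omega_{\CA/S}\to \Omega^1_{\CA/S}$.} First I would work locally on $S$ and assume $S=\Spec R$ is affine and integral. The group law $m:\CA\times_S\CA\to\CA$ and the two projections give, for each $S$-scheme $T$ and each $T$-point $a\in\CA(T)$, a translation automorphism $\tau_a:\CA_T\to\CA_T$. The key formal computation (valid for any smooth $S$-group scheme) is that the sheaf $\Omega^1_{\CA/S}$ is generated by \emph{invariant} differentials: concretely, one shows that the canonical map
$$
\pi^*e^*\Omega^1_{\CA/S}=\pi^*\overline\Omega_{\CA/S}\lra \Omega^1_{\CA/S}
$$
coming from the universal property of $e^*$ (pull back a differential along $e\circ\pi$ and compare with the identity) is an isomorphism. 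To see this is an isomorphism I would check it on fibers: for $s\in S$, the fiber of the map is the analogous map for the smooth group scheme $\CA_s$ over the field $k(s)$, and over a field it is classical that $\Omega^1_{G/k}\simeq \CO_G\otimes_k \mathfrak{g}^\vee$ via invariant differentials, since $G$ is covered by translates of a neighborhood of $e$ and the translation maps identify the cotangent spaces. Both sheaves in question are locally free of the same rank (here I use that $\CA/S$ is smooth, so $\Omega^1_{\CA/S}$ is locally free of rank $=\dim$ of the generic fiber, and $\overline\Omega_{\CA/S}=e^*\Omega^1_{\CA/S}$ has the same rank), so a map of vector bundles that is an isomorphism on every fiber is an isomorphism. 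Alternatively, and perhaps more cleanly, one constructs the inverse directly: restrict a local section of $\Omega^1_{\CA/S}$ back to the identity section after translating, i.e. use the map $\Omega^1_{\CA/S}\to \pi^*e^*\Omega^1_{\CA/S}$ induced by $(\mathrm{id},e\circ\pi)^*$ applied to $m^*$ — this is the standard "invariantization" and one checks it is a two-sided inverse by a direct diagram chase with the group axioms.

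\emph{Step 2: the isomorphism $\overline\Omega_{\CA/S}\to \pi_*\Omega^1_{\CA/S}$.} Apply $\pi_*$ to the isomorphism of Step 1. Since $\overline\Omega_{\CA/S}$ is a quasi-coherent (in fact locally free, hence flat) $\CO_S$-module and $\pi$ is quasi-compact and quasi-separated, the projection formula gives
$$
\pi_*\Omega^1_{\CA/S}\;\simeq\;\pi_*\bigl(\pi^*\overline\Omega_{\CA/S}\bigr)\;\simeq\;\overline\Omega_{\CA/S}\otimes_{\CO_S}\pi_*\CO_\CA.
$$
So it remains to show $\pi_*\CO_\CA=\CO_S$. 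Here I use the hypothesis that $\CA\to S$ is a smooth connected group scheme with abelian-variety generic fiber: $\pi_*\CO_\CA$ is a coherent (one may need a finiteness argument) or at least torsion-free $\CO_S$-subsheaf of $\pi_{\eta *}\CO_{\CA_\eta}=K$ (since $S$ is integral and $\CA$ is $S$-flat with integral generic fiber), and it contains $\CO_S$; to pin it down to $\CO_S$ one notes that on the generic fiber $\CA_\eta$ is a proper integral $K$-scheme, so $\Gamma(\CA_\eta,\CO)=K$, and that $\CA\to S$ being smooth (hence open) with $\CA$ reduced forces any global function to be a function on $S$ — more carefully, $\pi_*\CO_\CA$ is a sheaf of $\CO_S$-algebras, integral over $\CO_S$ at each codimension-$1$ point because there it agrees with the properness input on the semiabelian/abelian part, and being also contained in the field $K$ it equals $\CO_S$ by normality of $S$ (Dedekind). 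Composing the two displayed isomorphisms yields the second claimed isomorphism, and canonicity is clear from the construction.

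\emph{Main obstacle.} The translation argument in Step 1 is completely standard and robust; the genuinely delicate point is the identification $\pi_*\CO_\CA=\CO_S$ in Step 2, because $\CA$ is only the \emph{Néron model} (the lemma is stated for a general smooth connected group scheme with abelian generic fiber), not a proper scheme over $S$ — so one cannot simply invoke properness and $\Gamma(\text{abelian variety})=\text{base}$. The fix is to reduce to codimension-one points of $S$ (where $\CA$ over the local ring is semiabelian or at least a smooth connected group, and one controls global sections via the connected-fiber hypothesis and Weil's extension results), use normality of $S$ to conclude globally, and keep track that no extra global functions appear from the non-properness — this is exactly the kind of subtlety the paper flags when it says "we sketch the idea for lack of a complete reference." I expect that to be the step requiring the most care.
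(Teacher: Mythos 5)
Your Step 1 (the translation argument for $\pi^*\overline\Omega_{\CA/S}\to\Omega^1_{\CA/S}$) is fine and is essentially what the reference [BLR, \S4.2, Prop.~2] that the paper cites does. The problem is in Step 2, where you take a genuinely different route from the paper, and that route has a gap.

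You reduce the second isomorphism to the claim $\pi_*\CO_\CA=\CO_S$ via the projection formula. That claim is the whole difficulty, and your justification for it does not hold up as written: the phrase ``integral over $\CO_S$ at each codimension-one point because there it agrees with the properness input on the semiabelian/abelian part'' is not an argument the lemma's hypotheses support (nothing says $\CA$ is semiabelian in codimension one, and more importantly a semiabelian scheme such as $\GG_m$ has many more global functions than the base, so properness is the essential input, not semiabelianness). You also invoke ``normality of $S$ (Dedekind),'' but the lemma only assumes $S$ is \emph{integral}; neither normality nor one-dimensionality is a hypothesis, and the paper's proof uses neither. So even if $\pi_*\CO_\CA=\CO_S$ is true under the applications' extra hypotheses, your reduction silently strengthens the statement and then leaves the strengthened key claim essentially unproved.

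The paper avoids this entirely with a cleaner mechanism that you should note. Define the map $\overline\Omega_{\CA/S}\to\pi_*\Omega^1_{\CA/S}$ as the composite of the adjunction unit $\overline\Omega\to\pi_*\pi^*\overline\Omega$ with $\pi_*$ of the Step-1 isomorphism. Then observe that it has a canonical retraction: apply $\pi_*$ to the natural map $\Omega^1_{\CA/S}\to e_*e^*\Omega^1_{\CA/S}=e_*\overline\Omega_{\CA/S}$ and use $\pi\circ e=\mathrm{id}_S$ to get $\pi_*\Omega^1_{\CA/S}\to\overline\Omega_{\CA/S}$; the triangle identity for the adjunctions makes the composite the identity on $\overline\Omega_{\CA/S}$. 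Thus $\overline\Omega_{\CA/S}$ is a \emph{direct summand} of $\pi_*\Omega^1_{\CA/S}$. The complementary summand is a subsheaf of the torsion-free sheaf $\pi_*\Omega^1_{\CA/S}$ (torsion-free because $\Omega^1_{\CA/S}$ is $\CO_S$-flat and $S$ is integral) and it vanishes at the generic point because there $\CA_\eta$ is a proper geometrically connected $K$-scheme, so $\pi_{\eta*}\CO=K$ and the map is an isomorphism. A torsion-free sheaf on an integral scheme that is generically zero is zero, so the complement vanishes and the map is an isomorphism. This needs no computation of $\pi_*\CO_\CA$, no normality of $S$, and no projection formula. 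I'd recommend replacing your Step 2 with this argument, or, if you want to keep the projection-formula route, you must supply a genuine proof of $\pi_*\CO_\CA=\CO_S$ under hypotheses no stronger than those of the lemma.
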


\begin{proof}
The first isomorphism follows from \cite[\S 4.2, Proposition 2]{BLR}. For the second map, it is well-defined using the first isomorphism. To see that it is an isomorphism, it suffices to note the following three facts:
\begin{itemize}
\item[(1)] It is an isomorphism at the generic point of $S$;
\item[(2)] Both $\overline\Omega_{\CA/S}$ and $\pi_*\Omega_{\CA/S}^1$ are torsion-free sheaves over $S$;
\item[(3)] The map is a direct summand, where a projection 
$\pi_*\Omega_{\CA/S}^1\to \overline\Omega_{\CA/S}$ is given by applying $\pi_*$ to the natural map
$\Omega_{\CA/S}^1\to e_*\overline\Omega_{\CA/S}$.
\end{itemize}
\end{proof}

\subsubsection*{Maximal nef subalgebra}

The following result gives the notion of a maximal nef $p$-Lie subalgebra of a locally free commutative $p$-Lie algebra.

\begin{lem} \label{max lie}
Let $S=\PP_k^1$ for a field $k$ of characteristic $p>0$. 
Let $\fg$ be a locally free commutative $p$-Lie algebra over $S$. 
Then the maximal nef subbundle 
$\fg_\nef$ of $\fg$ is closed under the $p$-th power map of the Lie algebra $\fg$. 
\end{lem}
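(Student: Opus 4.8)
The plan is to reduce the statement to a finiteness/degree argument on $\PP^1_k$ combined with the $p$-linearity of the $p$-th power map. Recall that the $p$-th power map of $\fg$ is encoded by an $\CO_S$-linear map $\varphi: F_S^*\fg \to \fg$. Write $\fg_\nef \subseteq \fg$ for the maximal nef subbundle, i.e. the image of $\Gamma(S,\fg)\otimes_k \CO_S \to \fg$. Since $\fg_\nef$ is generated by global sections and is the largest such subbundle, the key is to show that $\varphi$ carries $F_S^*(\fg_\nef)$ into $\fg_\nef$. The natural strategy is: first observe that $F_S^*(\fg_\nef)$ is again a nef bundle on $\PP^1$ (indeed if $\fg_\nef \simeq \bigoplus \CO(d_i)$ with all $d_i\geq 0$, then $F_S^*\CO(d_i) = \CO(pd_i)$, so $F_S^*(\fg_\nef)$ is a direct sum of $\CO(pd_i)$ with $pd_i\geq 0$, hence nef and globally generated); then show that the image under $\varphi$ of a nef, globally generated subbundle must land inside $\fg_\nef$.

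\textbf{Key steps.} First I would make precise the correspondence between the $p$-th power map and the linear map $\varphi: F_S^*\fg\to\fg$, as set up in the ``$p$-Lie algebras and group schemes'' paragraph of the excerpt. Second, I would record that $\fg_\nef$ is characterized intrinsically as the sum of all subbundles of $\fg$ all of whose Birkhoff--Grothendieck summands have nonnegative degree; equivalently, $\fg_\nef$ is the largest subbundle $\CF$ with $\mu_{\min}(\CF)\geq 0$, equivalently the image of the evaluation map $\Gamma(S,\fg)\otimes_k\CO_S\to\fg$. Third, I would show $F_S^*\fg_\nef$ is nef: this is the computation $F_S^*\CO(d)=\CO(pd)$ applied summand-by-summand, using that Frobenius pullback of a globally generated (resp. nef) bundle on $\PP^1$ is globally generated (resp. nef). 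Fourth — the crux — I would argue that the image $\varphi(F_S^*\fg_\nef)\subseteq\fg$ is a quotient of a nef bundle, hence itself has $\mu_{\min}\geq 0$ (a quotient bundle of a nef bundle on a curve is nef, since surjections $\CO(a)\oplus\cdots\twoheadrightarrow\CO(b)$ force $b\geq\min a_i$), so it is a subbundle of $\fg$ with nonnegative minimal slope, hence contained in the maximal such, namely $\fg_\nef$. Finally, since $\fg_\nef$ is a subbundle (saturated, by replacing the image with its saturation — but images of globally generated bundles into vector bundles on a curve need not be saturated, so here I would instead argue that the image is contained in the saturation, which still has $\mu_{\min}\geq 0$ and is therefore inside $\fg_\nef$), I conclude $\delta^{[p]}\in\Gamma(U,\fg_\nef)$ for local sections $\delta$ of $\fg_\nef$, i.e. $\fg_\nef$ is closed under the $p$-th power map. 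One then checks $(a\delta)^{[p]}=a^p\delta^{[p]}$ restricts correctly, so $\fg_\nef$ inherits a commutative $p$-Lie algebra structure.

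\textbf{Main obstacle.} The step I expect to be the main obstacle is the fourth one: cleanly arguing that the image of $F_S^*\fg_\nef$ under $\varphi$ lands in $\fg_\nef$. The subtlety is that the set-theoretic image of a globally generated subbundle need not be a subbundle (it can fail to be saturated), and also need not literally be generated by \emph{global} sections of $\fg$ a priori; one must instead pass through the degree inequality characterization. The clean way is: the composite $F_S^*\fg_\nef \xrightarrow{\varphi} \fg$ has image a coherent subsheaf $\CQ\subseteq\fg$; its saturation $\bar\CQ$ is a subbundle, and as a quotient of the nef bundle $F_S^*\fg_\nef$ (up to the saturation, which only increases degrees on a curve) we get $\mu_{\min}(\bar\CQ)\geq\mu_{\min}$ of the image $\geq 0$; hence $\bar\CQ\subseteq\fg_\nef$ by maximality, so in particular $\CQ\subseteq\fg_\nef$. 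Equivalently and perhaps more simply: $\fg_\nef$ is generated by global sections, so $F_S^*\fg_\nef$ is generated by global sections of the form $F_S^*\delta$ with $\delta\in\Gamma(S,\fg)$, and $\varphi(F_S^*\delta)=\delta^{[p]}\in\Gamma(S,\fg)$; thus the image of $\varphi$ on $F_S^*\fg_\nef$ is generated by global sections of $\fg$, hence contained in the image of $\Gamma(S,\fg)\otimes_k\CO_S\to\fg$, which is exactly $\fg_\nef$. This second phrasing avoids saturation issues entirely and is the route I would ultimately take. Everything else is routine bookkeeping with the Birkhoff--Grothendieck decomposition.
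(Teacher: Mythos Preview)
Your proposal is correct and, in its final form, takes essentially the same approach as the paper: pass to the $\CO_S$-linear map $\varphi:F_S^*\fg\to\fg$, note that $F_S^*(\fg_\nef)$ is globally generated, and conclude that its image is a globally generated subsheaf of $\fg$, hence contained in $\fg_\nef$ by the definition $\fg_\nef=\Im(\Gamma(S,\fg)\otimes_k\CO_S\to\fg)$. Your ``second phrasing'' in the obstacle paragraph is exactly the paper's argument; the slope/saturation detour you sketch first is unnecessary but not wrong.
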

\begin{proof}
Recall that the $p$-th power map on $\fg$ corresponds to an $\CO_S$-linear map $F_S^*\fg \to  \fg.$
Denote by $\CN$ the image of $F_S^*(\fg_\nef)$ under this map, which gives an $\CO_S$-linear surjection 
$F_S^*(\fg_\nef)\to \CN$.
By definition, $\fg_\nef$ is globally generated, so 
$\CN$ is also globally generated.
We have $\CN\subset \fg_\nef$ by the maximality of $\fg_\nef$. 
Then we have a well-defined $\CO_S$-linear map $F_S^*(\fg_\nef)\to \fg_\nef$.
This finishes the proof.  
\end{proof}

\subsubsection*{Quotient by subgroup scheme}

Here we describe the quotient construction in the proof of 
Theorem \ref{positivity main}. 

Go back to the setting to Theorem \ref{positivity main}. Namely, $k$ is a finite field of characteristic $p$, and $A$ is an abelian variety over $K= k(t)$. 
The $p$-Lie algebra $\Lie(\CA/S)$ of the N\'eron model $\CA$ of $A$ is a vector bundle over $S$, canonically dual to the Hodge bundle $\OO_{\CA/S}$. We also have a natural identity
$$\Lie(\CA[F]/S)=\Lie(\CA/S),$$
where $\CA[F]=\ker(F:\CA\to \CA^{(p)})$ is the kernel of the relative Frobenius morphism. 

For the sake of Theorem \ref{positivity main}, assume that $\OO_{\CA/S}$ is not ample, or equivalently $\mu_{\max}(\Lie(\CA/S)) \geq 0$. 
Then the maximal nef subbundle $\Lie(\CA/S)_\nef$ of $\Lie(\CA/S)$ is a nonzero $p$-Lie subalgebra of 
$\Lie(\CA/S)$ by Lemma \ref{max lie}. 
By the correspondence between $p$-Lie algebras and group schemes,  
$\Lie(\CA/S)_\nef$ corresponds to a finite and flat group scheme 
$\CA[F]_\nef$ of height one over $S$, which is a closed subgroup scheme of $\CA[F]$ with $p$-Lie algebra isomorphic to 
$\Lie(\CA/S)_\nef$.
Form the quotient 
$$\CA_1'=\CA/(\CA[F]_\nef),$$
which is a smooth group scheme of finite type over $S$.  
We will have a description of this quotient process in Theorem \ref{quotient}. 

Denote by $A_1$ and $A[F]_\nef$ the generic fibers of $\CA_1$ and $\CA[F]_\nef$. It follows that 
$$A_1=A/(A[F]_\nef)$$ 
is an abelian variety over $K$.
In general, $\CA_1'$ may fail to be the N\'eron model of $A_1$, or even fail to be an open subgroup scheme of the N\'eron model.
Therefore, take $\CA_1$ to be the N\'eron model of $A_1$ over $S$.

We will see that there is a natural exact sequence 
$$
0\lra \Lie(\CA/S)_\nef \lra \Lie(\CA/S) \lra \Lie(\CA_1'/S).
$$
It follows that the nef part of $\Lie(\CA/S)$ becomes zero in 
$\Lie(\CA_1'/S)$ and $\Lie(\CA_1/S)$.
However, $\Lie(\CA_1/S)$ may obtain some new nef part. 
Thus the quotient process does not solve Theorem \ref{positivity main} immediately. 
Our idea is that if $\Lie(\CA_1/S)_\nef\neq 0$, then we 
can further form the quotient 
$$\CA_2'=\CA_1/(\CA_1[F]_\nef)$$
and let $\CA_2$ be the N\'eron model of the generic fiber of $\CA_2'$.
Repeat the process, we obtain a sequence 
$$\CA=\CA_0, \ \CA_1', \ \CA_1,\  \CA_2',\  \CA_2, \ \CA_3',\ \CA_3,\ \cdots$$ 
of smooth group schemes of finite type over $S$, whose generic fibers are abelian varieties isogenous to $A$. 

To get more information from the sequence, the key is to consider the \emph{height} of the above sequence. We will see that the height sequence is decreasing. 
Since each term is a non-negative integer, the height sequence is eventually constant, and thus $\Lie(\CA_n/S)_\nef$ is eventually a direct sum of the trivial bundle $\CO_S$. 

An intermediate result that the quotient process will give us is as follows.
The proof will be given in the next subsection. 

\begin{pro} \label{positivity intermediate}
Denote $K= k(t)$ and $S=\PP_k^1$ for a field $k$ of characteristic $p>0$. 
Let $A$ be an abelian variety over $K$. 
Then there is an abelian variety $B$ over $K$, with a purely inseparable isogeny $A\to B$, satisfying one of the following two conditions:
\begin{itemize}
\item[(1)] The Hodge bundle $\OO_{B}$ is ample.
\item[(2)] The Hodge bundle $\OO_{B}$ is nef. Moreover, there is an infinite sequence $\{\CG_n\}_{n\geq 1}$
of closed subgroup schemes of the N\'eron model $\CB$ of $B$ satisfying the following conditions:
\subitem[(a)]
for any $n\geq1$,  $\CG_n$ is a finite and radicial group scheme of constant type over $S$; 
 \subitem[(b)]
for any $n\geq1$,
$\CG_n$ is a closed subgroup scheme of $\CG_{n+1}$;
\subitem[(c)]
 the order of $\CG_n$ over $S$ goes to infinity.
\end{itemize}
\end{pro}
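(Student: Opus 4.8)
The proof is essentially a careful unwinding of the quotient process already sketched in the text, together with the height-decreasing property and the constant-type bookkeeping. I would organize it as follows.

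\medskip

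\emph{Step 1: Set up the quotient sequence and record the Lie-algebra exact sequence.} Starting from the N\'eron model $\CA$ of $A$ over $S=\PP^1_k$, if $\OO_{\CA/S}$ is already ample we are in case (1) with $B=A$. Otherwise $\mu_{\max}(\Lie(\CA/S))\geq 0$, so by Lemma \ref{max lie} the nef part $\Lie(\CA/S)_\nef$ is a nonzero $p$-Lie subalgebra; by the SGA3 equivalence it lifts to a finite flat height-one subgroup scheme $\CA[F]_\nef\subset\CA[F]\subset\CA$, and we form $\CA_1'=\CA/\CA[F]_\nef$ and let $\CA_1$ be the N\'eron model of its generic fiber $A_1=A/A[F]_\nef$. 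Iterating gives the sequence $\CA=\CA_0,\CA_1',\CA_1,\CA_2',\CA_2,\dots$; each transition $A_n\to A_{n+1}$ is a purely inseparable isogeny of height one, so the composite $A\to A_n$ is a purely inseparable isogeny. The quotient map induces the exact sequence
\[
0\lra \Lie(\CA_n/S)_\nef \lra \Lie(\CA_n/S) \lra \Lie(\CA_n'{}_{,+1}/S),
\]
(here I would cite the promised Theorem \ref{quotient} for the precise behavior of the quotient and this sequence), so that the nef part dies in $\CA_{n+1}'$, though $\CA_{n+1}$ may acquire a new nef part from the passage to the N\'eron model.

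\medskip

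\emph{Step 2: Use heights to stabilize.} By Theorem \ref{northcott} (or the Moret-Bailly / Faltings--Chai positivity) the heights $h(A_n)=\deg\OO_{\CA_n/S}$ are non-negative integers, and the key computation in the quotient construction (again Theorem \ref{quotient}) shows $h(A_{n+1})\leq h(A_n)$, with the drop governed by $\deg\bigl(\Lie(\CA_n/S)_\nef\bigr)$ minus a correction from the N\'eron-model comparison. Hence the height sequence is eventually constant, say for $n\geq n_0$. Once the height is constant, the drop term vanishes, which forces $\Lie(\CA_n/S)_\nef$ to be a direct sum of copies of $\CO_S$ for $n\geq n_0$ — i.e.\ a coherent sheaf of constant type — and therefore, by Lemma \ref{constant1}(1), $\CA_n[F]_\nef$ is a finite flat radicial group scheme of constant type over $S$ for all $n\geq n_0$. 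Relabel so that $B:=A_{n_0}$ with N\'eron model $\CB:=\CA_{n_0}$. If at some stage $\geq n_0$ the Hodge bundle became ample we land in case (1); otherwise every $\OO_{\CA_n/S}$ ($n\geq n_0$) is nef (its Lie dual has $\mu_{\max}\geq 0$ but the construction plus height-stability shows in fact that after peeling off the constant nef part there is nothing new, so $\mu_{\max}=0$, i.e.\ $\OO$ is nef), putting us in case (2) with $\OO_B$ nef.

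\medskip

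\emph{Step 3: Assemble the $\CG_n$.} For $m>n_0$ let $\CG_{m-n_0}\subset\CB$ be the kernel of the isogeny $\CB\to\CA_m$ (extended to the N\'eron models; one checks the kernel is a finite flat subgroup scheme of $\CB$, using that $\CB$ is the N\'eron model and the map is an isogeny on generic fibers). These form an increasing chain of closed subgroup schemes of $\CB$ (condition (b)), each radicial since the isogenies $B\to A_m$ are purely inseparable (condition (c) of the statement's list — wait, that's (a)'s radicial clause), and their orders over $S$ multiply out to $\prod \#\CA_n[F]_\nef \to\infty$ because each factor is $p^{\,\mathrm{rank}\,\Lie(\CA_n/S)_\nef}>1$ as long as the Hodge bundle is non-ample (condition (c)). The remaining point — constancy of type for the $\CG_n$, not just for the one-step kernels $\CA_n[F]_\nef$ — follows from Lemma \ref{constant2}: each $\CG_{m-n_0}$ sits in an exact sequence with kernel $\CG_{m-1-n_0}$ of constant type and quotient a one-step height-one kernel which is of constant type by Step 2, so by induction $\CG_{m-n_0}$ is of constant type over $S$ (this is exactly why Lemma \ref{constant2} was proved for $\PP^1_k$). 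This gives condition (a) in full.

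\medskip

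\emph{Main obstacle.} The genuinely delicate part is Step 2: proving that the height is non-increasing along the quotient process, and pinning down precisely when it is strictly decreasing versus constant. The subtlety is the comparison between $\CA_{n+1}'=\CA_n/\CA_n[F]_\nef$ and the actual N\'eron model $\CA_{n+1}$ of its generic fiber — the quotient need not even be an open subgroup scheme of the N\'eron model, so controlling $\deg\OO_{\CA_{n+1}/S}$ versus $\deg\OO_{\CA_{n+1}'/S}$ requires a careful local analysis at the bad fibers (and this is presumably the content of Theorem \ref{quotient}). A secondary technical point is justifying that a nef bundle on $\PP^1_k$ with $\mu_{\max}=0$ — which is what height-stability yields for the nef part — is literally $\CO_S^{\oplus r}$, so that Lemma \ref{constant1}(1) applies; this is immediate from Birkhoff--Grothendieck. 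Everything else (radicial, order$\to\infty$, the chain structure) is bookkeeping, and constancy of type for the $\CG_n$ is handed to us by Lemma \ref{constant2}.
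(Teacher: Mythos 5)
Your proposal is correct and follows essentially the same three-step argument as the paper: set up the quotient process $\CA_n\rightsquigarrow\CA_{n+1}'\rightsquigarrow\CA_{n+1}$, use the Lie-algebra exact sequence and the height drop to force stabilization (which makes $\Lie(\CA_n/S)_\nef\simeq\CO_S^{\oplus r}$, hence $\CA_n[F]_\nef$ of constant type by Lemma \ref{constant1}(1)), then assemble the cumulative kernels $\ker(\CA_{n_0}\to\CA_n)$ into the constant-type chain $\{\CG_n\}$ via Lemma \ref{constant2}. One small reference slip: the Lie-algebra sequence and the drop formula $h(\CA_{n+1}')=h(\CA_n)-(p-1)\deg(\Lie(\CA_n/S)_\nef)$ are the content of Theorem \ref{height}, and the comparison $h(\CA_{n+1})\le h(\CA_{n+1}')$ is Theorem \ref{northcott}; Theorem \ref{quotient} only provides existence and smoothness of the quotient group scheme.
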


The proposition is philosophically very similar to \cite[Prop. 2.6]{Ros2}. These two results are proved independently, but their proofs use similar ideas. For example, the ``maximal nef subalgebra'' appears in \cite[Lem. 4.7]{Ros2}, the ``quotient process'' is used in the proof of \cite[Prop. 2.6]{Ros2} in p. 20-21 of the paper, and the ``control by height'', to be introduced below by us, is also used in p. 20-21 of the paper.

\subsection{Control by heights} \label{section height}

In this subsection, we prove Proposition \ref{positivity intermediate}.
The main tool is the height of a group scheme over a projective curve. 

\subsubsection*{Heights of smooth group schemes}

Let $S$ be a projective and smooth curve over a field $k$, and let $K$ be the function field of $S$. 
Let $\CG$ be a smooth group scheme of finite type over $S$. 
The \emph{height of $\CG$} is defined to be 
$$
h(\CG)=\deg(\overline\Omega_{\CG/S})
=\deg(\det\overline\Omega_{\CG/S}).
$$
Here the Hodge bundle $\overline\Omega_{\CG/S}$ is the pull-back of the relative differential sheaf $\Omega_{\CG/S}^1$ to the identity section of $\CG$ as before.

Let $A$ be an abelian variety over $K$, and let $\CA$ be the N\'eron model of $A$ over $S$. 
The \emph{height of $A$} is defined to be 
$$
h(A)=\deg(\overline\Omega_{A})=\deg(\overline\Omega_{\CA/S})
=\deg(\det\overline\Omega_{\CA/S}).
$$
If $k$ is finite, this definition was originally used by Parshin and Zarhin to prove the Tate conjecture of abelian varieties over global function fields. A number field analogue, introduced by Faltings \cite{Fal}  and called the Faltings height, was a key ingredient in his proof of the Mordell conjecture. 

\begin{thm} \label{northcott}
Let $S$ be the projective and smooth curve over a field $k$. 
Let $\CG$ be a smooth group scheme of finite type over $S$ whose generic fiber $A$ is an abelian variety. 
Then $h(\CG)\geq h(A)\geq 0$. Moreover, the following hold:
\begin{itemize}
\item[(1)] $h(\CG)= h(A)$ if and only if $\CG$ is an open subgroup of the N\'eron model of $A$ over $S$.
\item[(2)] $h(\CG)=0$ if and only if $\CG$ is isotrivial, i.e., for some finite \'etale morphism $S'\to S$, the base change $\CG\times_SS'$ is constant over $S'$. 
\end{itemize}

\end{thm}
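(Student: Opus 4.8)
The plan is to prove Theorem \ref{northcott} by reducing each assertion to standard facts about N\'eron models, the semi-stability properties of the Hodge bundle due to Moret-Bailly and Faltings--Chai, and a descent argument for isotriviality. I would organize the proof into the chain of inequalities $h(\CG)\geq h(A)\geq 0$ first, and then treat the two equality cases.

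\medskip

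\emph{Step 1: the inequality $h(\CG)\geq h(A)$ and part (1).} Let $\CA$ be the N\'eron model of $A$ over $S$. By the N\'eron mapping property, the identity on $A$ extends to an $S$-homomorphism $j:\CG\to\CA$ whose generic fiber is an isomorphism. Since both $\CG$ and $\CA$ are smooth over $S$, the induced map on Hodge bundles $j^*\overline\Omega_{\CA/S}\to\overline\Omega_{\CG/S}$ is a map of vector bundles of the same rank over $S$ which is an isomorphism at the generic point; hence it is injective with torsion cokernel, and taking determinants gives $\deg\overline\Omega_{\CG/S}\geq\deg\overline\Omega_{\CA/S}$, i.e. $h(\CG)\geq h(A)$. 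Equality of degrees forces the cokernel to vanish, so $j^*\overline\Omega_{\CA/S}\to\overline\Omega_{\CG/S}$ is an isomorphism; combined with Lemma \ref{invariant differential} (applied to both $\CG$ and $\CA$), this means $j:\CG\to\CA$ is \'etale, and being also an open immersion on generic fibers and a monomorphism of smooth $S$-group schemes with connected target behavior controlled fiberwise, one concludes $j$ identifies $\CG$ with an open subgroup scheme of $\CA$. Conversely, if $\CG$ is open in $\CA$ then $\overline\Omega_{\CG/S}=\overline\Omega_{\CA/S}$ since the Hodge bundle only sees a neighborhood of the identity section, so $h(\CG)=h(A)$.

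\medskip

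\emph{Step 2: the inequality $h(A)\geq 0$.} This is exactly the theorem of Moret-Bailly (in the semi-stable case) and Faltings--Chai in general: the determinant line bundle $\det\overline\Omega_{\CA/S}$ is nef on the projective curve $S$, hence has non-negative degree. I would cite this directly (it is stated in the introduction of the present paper), spelling out that after a finite base change $S'\to S$ achieving semi-abelian reduction one has $\det\overline\Omega_{\CA/S}$ pulling back into the analogous nef bundle, and nefness descends along finite surjective maps up to the degree scaling, which suffices for the degree inequality.

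\medskip

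\emph{Step 3: part (2), the case $h(\CG)=0$.} By Step 1 and the first assertion we may assume $\CG=\CA$ is the N\'eron model, so it suffices to show $h(A)=0$ iff $A$ is isotrivial (then $\CG$ open in an isotrivial N\'eron model is isotrivial, and conversely an isotrivial $\CG$ has $\overline\Omega$ becoming constant after finite \'etale base change, forcing $\deg=0$). If $A$ becomes constant over a finite \'etale $S'\to S$, then $\overline\Omega_{\CA/S}$ pulls back to a trivial bundle on $S'$, so its degree times $[S':S]$ is $0$, giving $h(A)=0$. For the converse — which is the main obstacle — suppose $h(A)=0$. The Moret-Bailly/Faltings--Chai theorem gives more than nefness: the line bundle $\det\overline\Omega_{\CA/S}$ is not merely nef but has strictly positive degree unless the family is isotrivial; equivalently, the equality case of the semipositivity theorem is the isotrivial one. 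I would invoke this equality statement (the Chai--Faltings, resp. Moret-Bailly, rigidity/semipositivity theorem in its sharp form) after reducing to semi-abelian reduction via a finite base change, noting that isotriviality descends: if $A_{S'}$ is constant for $S'\to S$ finite \'etale then by a standard Galois/twisting argument $A$ itself is isotrivial over $S$. The hard part is precisely citing and correctly invoking the sharp equality case of the height semipositivity theorem in positive characteristic, together with the descent of isotriviality through the auxiliary finite base change used to attain semi-stable reduction; the rest is formal manipulation of Hodge bundles and N\'eron models as in Steps 1--2.
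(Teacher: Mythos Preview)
Your proposal is essentially correct and matches the paper's approach for Step~1 (the inequality $h(\CG)\geq h(A)$ and part~(1)) and Step~2 (the inequality $h(A)\geq 0$). The paper argues identically: N\'eron mapping property gives $\tau:\CG\to\CA$, the induced map on Hodge bundles is generically an isomorphism hence injective, and in the equality case one uses Lemma~\ref{invariant differential} to see $\tau$ is \'etale, hence an open immersion.

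For Step~3 there is a small but genuine difference worth noting. You plan to pass to a finite cover $S'\to S$ achieving semi-abelian reduction, apply the sharp form of Faltings--Chai over $S'$, and then \emph{descend isotriviality} back to $S$. The paper takes a cleaner route that avoids this descent: from $h(\CA)=0$ and the chain $[S':S]\cdot h(\CA)=h(\CA_{S'})\geq h(A_{K'})\geq 0$ one gets $h(\CA_{S'})=h(A_{K'})$, so by part~(1) the base change $\CA_{S'}$ is open in the N\'eron model over $S'$; since the latter has semi-abelian identity component, so does $\CA_{S'}$, and hence so does $\CA$ itself (this is a fiberwise condition). Thus $A$ already has semi-abelian reduction over $S$, and \cite[\S V.2, Prop.~2.2]{FC} applies \emph{directly over $S$}, yielding isotriviality in the required sense (finite \'etale cover) without any descent. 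Your approach is workable, but there is a slip: the cover $S'\to S$ needed for semi-stable reduction is typically \emph{not} \'etale, so your phrase ``if $A_{S'}$ is constant for $S'\to S$ finite \'etale'' does not apply as stated, and you would still owe an argument converting ``constant after a finite cover'' into ``constant after a finite \'etale cover.'' The paper's trick of first deducing semi-abelian reduction over $S$ sidesteps this entirely.
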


\begin{proof}
We first treat the inequality $h(\CG)\geq h(A)$.
Denote by $\CA$ the N\'eron model of $A$ over $S$.
By the N\'eron mapping property, there is a homomorphism $\tau:\CG\to\CA$ which is the identity map on the generic fiber. 
It induces morphisms 
$\overline\Omega_{\CA/S}\to \overline\Omega_{\CG/S}$ and $\det(\overline\Omega_{\CA/S})\to 
\det(\overline\Omega_{\CG/S})$ of locally free $\CO_S$-modules. The morphisms are isomorphisms at the generic point of $S$, and thus are injective over $S$. Taking degrees, we have $h(\CA)\leq h(\CG)$.

If $h(\CA)= h(\CG)$, then $\det(\overline\Omega_{\CA/S})\to  \det(\overline\Omega_{\CG/S})$
is an isomorphism, and thus $\overline\Omega_{\CA/S}\to \overline\Omega_{\CG/S}$ is also an isomorphism. 
By Lemma \ref{invariant differential}, the natural map 
$\tau^*\Omega_{\CA/S}\to \Omega_{\CG/S}$ is also an isomorphism. 
Consequently, $\tau:\CG\to\CA$ is \'etale. Then it is an open immersion since it is an isomorphism between the generic fibers. 

Now we check $h(A)\geq0$. 
If $\CA$ is semi-abelian, then $h(A)\geq 0$ by \cite[XI, Thm. 4.5]{MB2} or \cite[\S V.2, Prop. 2.2]{FC}. 
In general, by the semistable reduction theorem, there is a finite extension $K'$ of $K=k(S)$ such that $A_{K'}$ has everywhere semi-abelian reduction over the normalization $S'$ of $S$ in $K'$.
It follows that $h(\CA_{S'})\geq h(A_{K'})\geq 0$, and thus  
$h(\CA)\geq 0$.

If $h(\CG)=0$, the above arguments already imply that $\CG$ is an open subgroup scheme of the N\'eron model of $A$, and $A$ has everywhere semiabelian reduction. 
Now the result follows from \cite[\S V.2, Prop. 2.2]{FC}.
This finishes the proof. 
\end{proof}

\begin{remark}
If $k$ is finite, then there is a Northcott property for the height of abelian varieties, as an analogue of \cite[Chap. V, Prop. 4.6]{FC} over global function fields. 
This is the crucial property which makes the height a powerful tool in Diophantine geometry, but we do not use this property here. 
\end{remark}

\subsubsection*{Height under purely inseparable isogenies}

Here we prove a formula on the change of height under purely inseparable isogenies of smooth group schemes. 
We start with the following result about a general quotient process. 

\begin{thm} \label{quotient}
Let $\CG$ be a smooth group scheme of finite type over a Dedekind scheme $S$, and let $\CH$ be a closed subgroup scheme of $\CG$ which is flat over $S$. 
Then the fppf quotient $\CG'=\CG/\CH$ is a smooth group scheme of finite type over $S$, and the quotient morphism $\CG\to \CG'$ is faithfully flat. 
\end{thm}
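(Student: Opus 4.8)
The plan is to establish $\CG' = \CG/\CH$ as an fppf quotient sheaf, show it is representable by a scheme, and then verify smoothness and finite type by descent along the faithfully flat quotient map. First I would recall that since $\CH$ is flat and finite type over $S$ (being a closed subgroup scheme of the finite-type $\CG$, and flat by hypothesis), and $\CG$ is quasi-projective locally on $S$ (a smooth group scheme of finite type over a Dedekind scheme), the fppf quotient sheaf $\CG/\CH$ is representable by a scheme $\CG'$ of finite type over $S$ with $\CG\to\CG'$ faithfully flat. For this I would invoke the standard representability results for quotients of group schemes by flat closed subgroup schemes — e.g. \cite[Exp. V, Thm. 4.1]{SGA3} or the Anantharaman / Raynaud results on quotients over a Dedekind base, combined with the fact that $\CH$-orbits in a quasi-projective $\CG$ lie in affine opens so that the usual descent of quasi-projectivity applies. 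The key point to check carefully is that $\CG\to\CG'$ is faithfully flat: flatness comes from the fact that it is a torsor under $\CH$ fppf-locally (the projection $\CG\times_S\CH\to\CG$, $(g,h)\mapsto gh$ identifies the pullback with $\CG\times_S\CH$, which is flat over $\CG$ since $\CH$ is flat over $S$), and surjectivity is automatic since $e\in\CG$ maps onto $\CG'$ and $\CG'$ is a quotient sheaf.

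Next I would deduce smoothness of $\CG'$ over $S$ from that of $\CG$. Since $\CG\to\CG'$ is faithfully flat and $\CG\to S$ is smooth, the composite $\CG\to S$ is flat, hence $\CG'\to S$ is flat by faithfully flat descent of flatness along $\CG\to\CG'$. For the geometric regularity of the fibers: the fibers of $\CG'\to S$ are of the form $\CG_s/\CH_s$ where $\CG_s$ is a smooth group scheme over the residue field $k(s)$ and $\CH_s$ is a closed subgroup scheme; a quotient of a smooth algebraic group by a closed subgroup scheme over a field is again smooth (this is classical — the quotient $G/H$ of a smooth group by any subgroup scheme over a field is smooth, as $G\to G/H$ is faithfully flat so smoothness descends on the fiber). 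Alternatively, and more uniformly, I would argue directly that $\CG'\to S$ is smooth because $\CG\to\CG'$ is faithfully flat and $\CG\to S$ is smooth: by \cite[$\mathrm{IV}_4$, 17.7.7]{EGA} (or \cite[Tag 02VL]{stacks}), if $\CG\to\CG'$ is faithfully flat and locally of finite presentation and $\CG\to S$ is smooth, then $\CG'\to S$ is smooth. Finite type of $\CG'$ over $S$ follows from representability as a finite-type scheme noted above, or again by descent from $\CG$.

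The main obstacle I expect is the representability of the fppf quotient $\CG/\CH$ by a scheme over a general Dedekind base — this is not formal, since quotients by flat group schemes need not be schemes without some projectivity/quasi-projectivity hypothesis on the orbits. The resolution is that $\CG$, being smooth and of finite type over the Dedekind scheme $S$, is quasi-projective Zariski-locally on $S$ (indeed any finite-type separated scheme over an affine Dedekind base with connected fibers... one should be a little careful here, but for group schemes one can use that finite subsets of a $\CG$-orbit lie in an affine open and apply \cite[Exp. V]{SGA3}), so that the $\CH$-action admits a quotient in the category of schemes. An alternative route that sidesteps delicate representability over $S$: work Zariski-locally on $S$ where $\CG$ is quasi-projective, form the quotient there, and glue; or use Artin's theorem to get an algebraic space quotient and then check it is a scheme using that $\CG\to\CG'$ is finite... no, $\CH$ need not be finite here. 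So the cleanest path is to cite the existence of quotients of flat affine-or-quasi-projective group schemes over Dedekind schemes (Anantharaman, Raynaud) and then the smoothness and finite-type statements are routine descent as above. I would structure the written proof as: (i) $\CH$ flat of finite type; (ii) representability of $\CG'$ with $\CG\to\CG'$ faithfully flat of finite presentation; (iii) flatness of $\CG'/S$ by descent; (iv) smoothness of $\CG'/S$ by \cite[$\mathrm{IV}_4$, 17.7.7]{EGA}; (v) finite type of $\CG'$.
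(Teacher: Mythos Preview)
Your proposal is correct and, like the paper, invokes Anantharaman \cite[Thm.~4.C]{Ana} for the hard step of representability over a Dedekind base. The remaining verifications are carried out in a different order. The paper first proves $\CG'\to S$ is flat directly, using that $\CO_{\CG'}$ injects into $\CO_\CG$ and is therefore torsion-free over the Dedekind ring $\CO_S$; it then checks smoothness of the geometric fibers $\CG'_{\bar s}$ via Cartier's theorem (they are reduced, being images of the reduced $\CG_{\bar s}$, and reduced group schemes over an algebraically closed field are smooth); and only at the end does it deduce flatness of $\CG\to\CG'$ from the fibral flatness criterion \cite[IV-3, Thm.~11.3.10]{EGA}. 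You reverse the logic: establish faithful flatness of $\CG\to\CG'$ first from the $\CH$-torsor structure, then descend flatness and smoothness of $\CG'\to S$ along it. Your route avoids the Dedekind-specific ``torsion-free $=$ flat'' argument, at the cost of needing descent of regularity along faithfully flat maps for the fibers.

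Two small repairs. First, your parenthetical argument for flatness of $\CG\to\CG'$ is circular as written: you pull back $\CG\to\CG'$ along itself and observe the result is flat over $\CG$, but descending flatness from that requires $\CG\to\CG'$ to already be an fppf cover. The fix is standard: since $\CG\to\CG'$ is an epimorphism of fppf sheaves, there is an fppf cover $U\to\CG'$ lifting to $\CG$, and then $\CG\times_{\CG'}U\simeq\CH\times_S U$ is flat over $U$, so flatness descends along $U\to\CG'$. Second, the references you cite for the smoothness descent (EGA IV$_4$, 17.7.7 and Stacks 02VL) are about descent along a cover of the \emph{target}; the statement you need---$\CG\to\CG'$ fppf and $\CG\to S$ smooth imply $\CG'\to S$ smooth---is true but comes from descent of flatness together with descent of regularity of geometric fibers along faithfully flat maps (e.g.\ Matsumura, Thm.~23.7), not from those tags.
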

\begin{proof}
The essential part follows from \cite[Thm. 4.C]{Ana}, which implies that the quotient $\CG'$ is a group scheme over $S$. 
It is easy to check that $\CG'$ is flat over $S$. 
In fact, since $\CG$ is flat over $S$, the sheaf $\CO_G$ contains no $\CO_S$-torsion. Since $\CG\to \CG'$ is an epimorphism, $\CO_{G'}$ injects into $\CO_G$, and thus contains no $\CO_S$-torsion either. 
Then $\CG'$ is flat over $S$.
To check that $\CG'$ is smooth over $S$, it suffices to check that for any geometric point $s$ of $S$, the fiber $\CG'_s$ is smooth. 
Since $\CG_s$ is reduced and $\CG_s\to \CG'_s$ is an epimorphism, $\CG'_s$ is reduced. Then $\CG'_s$ is smooth since it is a group scheme over an algebraically closed field. 
This checks that $\CG'$ is smooth over $S$. 
Moreover, $\CG_s\to \CG'_s$ is flat as $\CG_s$ is an $\CH_s$-torsor over $\CG_s'$. 
It follows that $\CG\to \CG'$ is flat by \cite[IV-3, Thm. 11.3.10]{EGA}. This finishes the proof.
\end{proof}

Now we introduce a theorem to track the change of heights of abelian varieties under the quotient process. 
The result is similar to \cite[Lem. 4.11]{Ros2}.

\begin{thm} \label{lie algebra} \label{height}
Let $S$ be a Dedekind scheme over $\FF_p$ for a prime $p$. Let $\CA$ be a smooth group scheme of finite type over $S$. 
Let $\CG$ be a closed subgroup scheme of $\CA[F]$ which is flat over $S$, and denote by $\CB=\CA/\CG$ the quotient group scheme over $S$.
Then the following hold:
\begin{itemize}
\item[(1)] There is a canonical exact sequence 
$$0 \lra \Lie(\CG/S)\lra \Lie(\CA/S)\lra \Lie(\CB/S) \lra 
F_S^*\Lie(\CG/S)\lra 0$$
of coherent sheaves over $S$. Here $F_S:S\to S$ is the absolute Frobenius morphism.  
\item[(2)]
If $S$ is a projective and smooth curve over a field $k$ of characteristic $p>0$, then
$$h(\CB)=h(\CA)-(p-1)\deg(\Lie(\CG/S)).$$ 
\end{itemize}
\end{thm}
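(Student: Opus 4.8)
The plan is to deduce both parts from the local structure of the quotient map $q:\CA\to\CB$ together with the height-one hypothesis $\CG\subset\CA[F]$. First I would work locally on $S$, so assume $S=\Spec R$ for a DVR (or more generally a local ring) and that all the group schemes in sight have trivialized Hodge/Lie bundles; the maps being constructed are canonical so they glue. The key input is that $\CG$ is killed by the relative Frobenius $F_{\CA/S}$, hence the quotient map $q$ factors the relative Frobenius: there is a canonical $S$-homomorphism $v:\CB\to \CA^{(p)}$ with $v\circ q = F_{\CA/S}$, and in fact $\ker(v)=\CA[F]/\CG$, which is again finite flat of height one. This is the standard ``Frobenius factors through any height-one subgroup'' phenomenon, and it is what converts the geometry into the clean four-term sequence.

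The second step is to differentiate this factorization at the identity. Applying $\Lie(-/S)$ to $q$ and $v$ and using $\Lie(F_{\CA/S})=0$ (the relative Frobenius induces the zero map on Lie algebras) gives that $\Lie(q):\Lie(\CA/S)\to\Lie(\CB/S)$ has image exactly $\ker(\Lie(v))$ and kernel $\Lie(\CG/S)$ (the latter because $\CG\hookrightarrow\CA$ is a closed immersion of smooth-over-the-generic-point, and for height-one groups $\Lie(\CG/S)\to\Lie(\CA/S)$ is injective — this is part of the $p$-Lie algebra equivalence from \cite{SGA3} quoted earlier). For exactness on the right, I would identify $\mathrm{coker}(\Lie(q))$ with $\mathrm{coker}(\Lie(v):\Lie(\CB/S)\to\Lie(\CA^{(p)}/S))$; since $\CA^{(p)}=\CA\times_{S,F_S}S$, one has canonically $\Lie(\CA^{(p)}/S)=F_S^*\Lie(\CA/S)$, and the cokernel of $\Lie(v)$ is $F_S^*\Lie(\CG/S)$ by the same argument applied to the height-one group $\ker(v)=\CA[F]/\CG$ and its Frobenius factorization $\CA^{(p)}\to(\CA^{(p)})^{(p)}$ — or, more directly, by noting that $v:\CB\to\CA^{(p)}$ is itself a height-one isogeny whose kernel has Lie algebra $F_S^*\Lie(\CG/S)$ because $\CA^{(p)}=\CA/\CA[F]$ and $\CA[F]/\CG$ pulls back appropriately. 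I expect this identification of the cokernel — pinning down that it is $F_S^*\Lie(\CG/S)$ rather than merely something of the same rank — to be the main obstacle, and the cleanest route is probably the symmetric one: record that $q$ and $v$ are ``dual'' pieces of the relative Frobenius of $\CA$, so $v$ plays for $\CB\to\CA^{(p)}$ exactly the role $q$ played for $\CA\to\CB$, with $\CG$ replaced by $\CA[F]/\CG$, whose Lie algebra is $\Lie(\CA/S)/\Lie(\CG/S)$; then a diagram chase over the square relating $F_{\CA/S}$, $q$, $v$ and $F_{\CB/S}$ gives the four-term sequence with the stated end term.

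For part (2), I would take determinants of degrees in the exact sequence of (1). Since $S$ is a projective smooth curve and all four terms are coherent (the outer two, $\Lie(\CG/S)$ and $F_S^*\Lie(\CG/S)$, are locally free because $\CG$ is finite flat of height one, hence its Hodge bundle and Lie algebra are locally free by \cite[VII$_A$, Prop. 5.5.3]{SGA3}; the middle two are locally free as Hodge/Lie bundles of smooth group schemes), the alternating sum of degrees vanishes:
$$
\deg\Lie(\CG/S)-\deg\Lie(\CA/S)+\deg\Lie(\CB/S)-\deg F_S^*\Lie(\CG/S)=0.
$$
Now $\deg F_S^*\Lie(\CG/S)=p\cdot\deg\Lie(\CG/S)$ because $F_S:S\to S$ has degree $p$ (pullback along a degree-$p$ finite flat map multiplies degrees of line bundles, hence of determinants, by $p$), and $\deg\Lie(\CA/S)=-h(\CA)$, $\deg\Lie(\CB/S)=-h(\CB)$ by the duality between $\Lie$ and $\OO$ from Lemma \ref{invariant differential} and the definition of height. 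Substituting gives $\deg\Lie(\CG/S)+h(\CA)-h(\CB)-p\deg\Lie(\CG/S)=0$, i.e. $h(\CB)=h(\CA)-(p-1)\deg\Lie(\CG/S)$, as claimed. The only subtlety here is the sign/duality bookkeeping between $\overline\Omega$ and $\Lie$, which is routine given the canonical perfect pairing already recorded in the excerpt.
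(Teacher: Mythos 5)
Your overall strategy---factor the relative Frobenius $F_{\CA/S}$ through $q:\CA\to\CB$ via a map $v:\CB\to\CA^{(p)}$ with $v\circ q=F_{\CA/S}$ and $\ker(v)=\CA[F]/\CG$, then differentiate---is sound and is, in substance, the same idea the paper uses. The paper packages it as the snake lemma applied to the commutative square formed by $0\to\CG\to\CA\to\CB\to 0$, $0\to\CG^{(p)}\to\CA^{(p)}\to\CB^{(p)}\to 0$, and the vertical relative Frobenius maps; since $F_{\CG/S}=0$ this yields $0\to\CG\to\CA[F]\to\CB[F]\to\CG^{(p)}\to 0$, and then one applies the Lie functor, which the paper shows is exact on height-one finite flat group schemes by a fiber-wise dimension count.

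However, the middle of your argument contains a real error. You assert twice that the kernel $\ker(v)=\CA[F]/\CG$ has Lie algebra $F_S^*\Lie(\CG/S)$, and that $\mathrm{coker}(\Lie(q))$ can be identified with $\mathrm{coker}\bigl(\Lie(v):\Lie(\CB/S)\to\Lie(\CA^{(p)}/S)\bigr)$. Both are false. The exact sequence $0\to\CG\to\CA[F]\to\CA[F]/\CG\to 0$ of height-one groups gives $\Lie(\CA[F]/\CG)=\Lie(\CA/S)/\Lie(\CG/S)$, whose rank is $\rank\Lie(\CA)-\rank\Lie(\CG)$, not $\rank\Lie(\CG)$. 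Likewise, applying the (to-be-proved) statement to $v$ would give $\mathrm{coker}(\Lie(v))=F_S^*\Lie(\CA[F]/\CG)$, which has the wrong rank to coincide with $\mathrm{coker}(\Lie(q))=F_S^*\Lie(\CG)$ except in the degenerate case $\rank\Lie(\CG)=\tfrac12\rank\Lie(\CA)$; also, attempting to compute $\mathrm{coker}(\Lie(v))$ this way is circular. You do correct yourself in the closing sentence, where $\CG$ is replaced by $\CA[F]/\CG$ with Lie algebra $\Lie(\CA/S)/\Lie(\CG/S)$; the promised ``diagram chase over the square'' is then exactly the snake lemma and does work. What is still missing is an explicit justification that the Lie functor is exact on the height-one category (your proposal only invokes injectivity of $\Lie(\CG)\to\Lie(\CA)$ from \cite{SGA3}, but never establishes surjectivity of $\Lie(\CB)\to F_S^*\Lie(\CG)$), which the paper handles by reducing to fibers and counting dimensions. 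Your treatment of part (2) is correct and matches the paper.
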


\begin{proof}
Part (2) is a direct consequence of part (1) by $\OO_{\CA/S}=\Lie(\CA/S)^\vee$. The major problem is to prove part (1).
Consider the commutative diagram:
$$
\xymatrix{
\quad 0 \quad \ar[r] &
\quad \CG \quad \ar[r] \ar[d]^F &  \quad \CA \quad \ar[r] \ar[d]^F & \quad \CB \quad  \ar[d]^F \ar[r] & 0\\
\quad 0 \quad \ar[r] &
\quad  \CG^{(p)}\quad  \ar[r]     & \quad  \CA^{(p)}\quad  \ar[r]     & \quad \CB^{(p)} \quad \ar[r] & 0 \\
}
$$
Both rows are exact. The relative Frobenius $F:\CG\to \CG^{(p)}$ is zero as $\CG$ has height one. 
By the snake lemma, we have an exact sequence 
$$
0\lra \CG \lra \CA[F] \lra \CB[F] \lra \CG^{(p)} \lra 0.
$$
The Lie algebra of this sequence is exactly the sequence of the theorem. 

It suffices to check the general fact that the Lie functor from the category of finite and flat commutative group schemes of height one over $S$ to the category of $p$-Lie algebras 
is exact. 
In fact, if 
$$
0\lra\CH_1 \lra \CH \lra \CH_2 \lra 0
$$
is an exact sequence in the first category, then we first get a complex 
$$0 \lra \Lie(\CH_1/S)\lra \Lie(\CH/S) {\lra} \Lie(\CH_2/S)\lra 0$$
of locally free sheaves over $S$.
By the canonical duality between the Lie algebra and the Hodge bundle, 
the Lie functor commutes with base change. 
For any point $s\in S$, consider the fiber of the complex of Lie algebras above $s$. 
It is exact by counting dimensions, since $\dim \Lie(\CH_s/s)$ equals the order of $\CH_s$ and the order is additive under short exact sequences. 
This shows that the complex is fiber-wise exact.  
Then the complex is exact.
This finishes the proof.
\end{proof}

\subsubsection*{Stationary height}

Now we prove Proposition \ref{positivity intermediate}. 
Let $\CA=\CA_0$ be as in the proposition. 
By the quotient process, we obtain a sequence 
$$
\CA_0,\ \CA_1',\ \CA_1,\ \CA_2',\ \CA_2,\ \cdots
$$ 
of abelian varieties over $K$. Here for any $n\geq0$,
$$
\CA_{n+1}'=\CA_n/\CA_n[F]_\nef
$$
and $\CA_{n+1}$ is the N\'eron model of the generic fiber of $\CA_{n+1}'$. 
We know that $\CA_n'$ is a smooth group scheme of finite type over $S$ by Theorem \ref{quotient}. 

By Theorem \ref{northcott}, $h(\CA_n)\leq h(\CA_n')$. 
By Theorem \ref{height}(2), 
$$h(\CA_{n+1}')=h(\CA_n)-(p-1)\deg(\Lie(\CA_n/S)_\nef) \leq h(\CA_n).$$ 
It follows that the sequence
$$
h(\CA_0),\ h(\CA_1'),\ h(\CA_1),\ h(\CA_2'),\ h(\CA_2),\ \cdots
$$ 
is decreasing. 
Note that each term of the sequence is a non-negative integer by Theorem \ref{northcott}.
Therefore, there is an integer $n_0\geq 0$ such that 
$$h(\CA_{n}')=h(\CA_{n})=h(\CA_{n_0}), \quad \forall\ n\geq n_0.$$ 
It follows that for any $n\geq n_0$, $\CA_{n}'$ is an open subgroup scheme of $\CA_{n}$ and 
$$\deg(\Lie(\CA_n/S)_\nef)=\deg(\Lie(\CA_n'/S)_\nef)=0.$$ 
As a consequence, for any $n\geq n_0$, $\Lie(\CA_n/S)_\nef$ is a direct sum of copies of the trivial bundle $\CO_S$, and 
$\OO_{\CA_n/S}=\Lie(\CA_n/S)^\vee$ is nef. 
Moreover, $\OO_{\CA_n/S}$ is ample if and only if $\CA_n\to \CA_{n+1}$ is an isomorphism.  

For Proposition \ref{positivity intermediate}, if none of $\OO_{\CA_n/S}$ is ample, take $\CB=\CA_{n_0}$. The group scheme $\ker(\CA_{n_0}\to \CA_n)$ has a degree going to infinity.
It is of constant type over $S$, as an easy consequence of Lemma \ref{constant1}(1) and Lemma \ref{constant2}. 
This proves Proposition \ref{positivity intermediate}.

\subsection{Lifting $p$-divisible groups} \label{section divisible}

In this section, we prove Theorem \ref{positivity main}.
Note that we have already proved Proposition \ref{positivity intermediate}. 
To finish the proof, it suffices to prove the following result:

\begin{pro} \label{positivity intermediate2}
Denote $K= k(t)$ and $S=\PP_k^1$ for a finite field $k$ of characteristic $p$. 
Let $\CA$ be a smooth group scheme of finite type over $S$ whose generic fiber $A$ is an abelian variety over $K$.  
Assume that there is an infinite sequence $\{\CG_n\}_{n\geq 1}$
of closed subgroup schemes of $\CA$ satisfying the following conditions:
\begin{itemize}
\item[(a)]
for any $n\geq1$, $\CG_n$ is a finite group scheme of constant type over $S$; 
\item[(b)]
for any $n\geq1$, $\CG_n$ is a subgroup scheme of $\CG_{n+1}$; 
\item[(c)]
 the order of $\CG_n$ over $S$ is a power of $p$ and goes to infinity. 
\end{itemize}
Then the $(K/k)$-trace of $A$ is non-trivial. 
\end{pro}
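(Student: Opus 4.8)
The plan is to convert the increasing tower of finite subgroup schemes $\{\CG_n\}$ of constant type into a single $p$-divisible subgroup of $\CA[p^\infty]$ of constant type, then ``spread out'' that $p$-divisible group to a constant one by specializing at a rational point, and finally use $p$-adic Tate/de Jong isogeny theorems to produce a nonzero homomorphism from a constant abelian variety into $A$. First I would replace the $\CG_n$ by a cofinal subsequence on which the ranks are increasing; by Lemma~\ref{constant1}(3) the inclusions $\CG_n\hookrightarrow\CG_{n+1}$ are themselves of constant type, so the whole direct system descends to a direct system $\{G_n\}$ of finite $p$-group schemes over $k$. Writing $H_\infty = \varinjlim G_n$ (an fppf-sheaf over $k$ which is an increasing union of finite $p$-group schemes) and base-changing to $S$, one gets a subsheaf $\CH_\infty = (H_\infty)_S$ of $\CA[p^\infty]$. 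The point is that $H_\infty$, having arbitrarily large finite subgroup schemes of $p$-power order, is (up to isogeny, after perhaps discarding the \'etale and multiplicative parts that could not embed anyway) a nonzero $p$-divisible group over $k$; concretely one takes the ``connected part'' and checks that its Dieudonn\'e module has positive rank, so $H_\infty$ contains a nontrivial $p$-divisible group $\mathcal D$ over $k$. Then $\mathcal D_S \subset \CA[p^\infty]$ is a nonzero $p$-divisible subgroup of constant type.

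Next I would pick, after a finite extension of $k$ (harmless for the $(K/k)$-trace statement, by the standard behavior of the trace under finite field extension), a point $s\in S(k)$ at which the smooth group scheme $\CA$ has good reduction, so that $C:=\CA_s$ is an abelian variety over $k$. Because $\mathcal D_S$ is of constant type, its fiber at $s$ is $\mathcal D$, which is a $p$-divisible subgroup of $C[p^\infty]$; but also $\mathcal D_S$ is a $p$-divisible subgroup of $\CA[p^\infty]$, whose generic fiber is $A[p^\infty]$. Thus both $A[p^\infty]$ and $(C_K)[p^\infty]$ contain the common nonzero $p$-divisible group $\mathcal D_K$ over $K$. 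Over the finite field $k$, Tate's theorem (in its Dieudonn\'e-module / $p$-adic incarnation) gives that $\End(C[p^\infty])\otimes\QQ$ is semisimple and that $C[p^\infty]$ is isogenous to a product of isotypic pieces; hence the inclusion $\mathcal D \hookrightarrow C[p^\infty]$ splits up to isogeny, which after base change to $K$ yields a nonzero map $(C_K)[p^\infty]\to A[p^\infty]$ (project $C_K[p^\infty]$ onto $\mathcal D_K$ and include into $A[p^\infty]$). Therefore $\Hom_K((C_K)[p^\infty],\,A[p^\infty])\neq 0$.

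Now I would invoke de Jong's theorem \cite{Jon} (full faithfulness of the generic-fiber functor $X\mapsto X[p^\infty]$ on $p$-divisible groups, valid because $K$ is finitely generated over $\FF_p$ of transcendence degree one): the nonzero homomorphism of $p$-divisible groups lifts to a nonzero homomorphism $C_K \to A$ of abelian varieties over $K$, or at least to a nonzero homomorphism $C_K\to A$ after dualizing appropriately (a nonzero map $C_K[p^\infty]\to A[p^\infty]$ gives a nonzero isogeny onto its image, hence a nonzero map of abelian varieties). A nonzero homomorphism $C_K\to A$ from the constant abelian variety $C_K$ means exactly that the $(K/k)$-trace of $A$ — which represents $\Hom_K((-)_K, A)$ on abelian varieties over $k$ — is nontrivial, as desired.

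The main obstacle, and the step I would spend the most care on, is the first one: producing from the combinatorial data $\{\CG_n\}$ an honest nonzero $p$-divisible group over $k$ inside $\CA[p^\infty]$ of constant type. One must be careful that the direct limit $H_\infty$ need not itself be $p$-divisible (it is only an increasing union of finite flat $p$-group schemes whose orders tend to infinity), so extracting a $p$-divisible subgroup requires a structural argument — most cleanly via covariant Dieudonn\'e theory over the perfect field $k$, where $H_\infty$ corresponds to a Dieudonn\'e module that is infinite-dimensional over $k$ (or at least of unbounded finite length), forcing a nonzero $F$- and $V$-stable finitely generated submodule free over $W(k)$, i.e.\ a nontrivial $p$-divisible subgroup. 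One also has to confirm that this $\mathcal D$, viewed over $S$, really is a \emph{subgroup} of $\CA[p^\infty]$ and not merely a quotient or a sub-\emph{up-to-isogeny} — here condition (b), that the $\CG_n$ form an honest increasing chain of closed subgroup schemes, is exactly what guarantees the limit is a genuine subsheaf. After that, the Tate-semisimplicity and de Jong inputs are essentially black boxes.
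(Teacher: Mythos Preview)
Your overall architecture matches the paper's proof exactly: descend the tower $\{\CG_n\}$ to $\{G_n\}$ over $k$ via Lemma~\ref{constant1}(3), extract a nonzero $p$-divisible subgroup, specialize at a good-reduction point $s\in S(k)$ (after enlarging $k$) to get $C=\CA_s$, use semisimplicity of $C[p^\infty]$ to split the inclusion up to isogeny, and then apply de Jong's theorem to conclude $\Hom(C_K,A)\neq 0$.

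There is, however, a genuine gap in the step you yourself flag as the main obstacle. You assert that an increasing union $H_\infty=\varinjlim G_n$ of finite $p$-group schemes with orders going to infinity must contain a nonzero $p$-divisible subgroup, and justify this by saying the Dieudonn\'e module is ``infinite-dimensional over $k$ \ldots\ forcing a nonzero $F$- and $V$-stable finitely generated submodule free over $W(k)$.'' This is false as stated: take $G_n=\alpha_p^n$ (or $(\ZZ/p\ZZ)^n$); the limit has unbounded $p$-torsion and contains no $p$-divisible subgroup. The missing input is that the orders of $G_n[p]$ are \emph{bounded}. This follows from the hypotheses but you never say so: since $\CG_n$ is a closed subgroup scheme of $\CA$, the constant-type group $\CG_n[p]$ embeds in $\CA[p]$, hence its generic fiber embeds in $A[p]$, which has order $p^{2\dim A}$; constancy over $S$ then bounds the order of $G_n[p]$ uniformly in $n$. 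Once you have this bound, your Dieudonn\'e argument goes through (the module has bounded $p$-torsion and unbounded length, so it has a nonzero $W(k)$-free $F,V$-stable submodule). The paper establishes this same bound and then gives an elementary extraction argument (its Lemma~\ref{divisible}) that avoids Dieudonn\'e theory entirely, constructing $H_m=\bigcap_{a\geq1}(p^aG_\infty)[p^m]$ directly; it notes your Dieudonn\'e route as an alternative in a remark. Your aside about ``discarding the \'etale and multiplicative parts'' is a red herring and should be dropped.
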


We refer to \cite{Con} for the theory of $(K/k)$-traces. 
Before proving Proposition \ref{positivity intermediate2}, let us see how Proposition \ref{positivity intermediate} and 
Proposition \ref{positivity intermediate2} imply Theorem \ref{positivity main}.
Let $A$ be as in Theorem \ref{positivity main}.
Apply Proposition \ref{positivity intermediate} to $A$, which gives an abelian variety $B$ over $K$ with a purely inseparable isogeny $A\to B$.
If $B$ satisfies Proposition \ref{positivity intermediate}(1), the result already holds. 
If $B$ satisfies Proposition \ref{positivity intermediate}(2), apply 
Proposition \ref{positivity intermediate2} to the N\'eron model $\CB$ of $B$. 
Then $B$ has a non-trivial $(K/k)$-trace, and thus 
$A$ also has a non-trivial $(K/k)$-trace $A_0$, which is an abelian variety over $k$ with a homomorphism $A_{0,K}\to A$.
By \cite[Theorem 6.4]{Con}, the homomorphism $A_{0,K}\to A$ is an isogeny to its image $A'$. Note that $A$ is isogenous to 
$A_{0,K}\times_K (A/A')$. 
Apply the same process to the abelian variety $A/A'$ over $K$. 
Note that the dimension of $A/A'$ is strictly smaller than that of $A$. The process eventually terminates. 
This proves Theorem \ref{positivity main}.

\subsubsection*{The $p$-divisible group}

To prove Proposition \ref{positivity intermediate2}, the first step is to  
change the direct system $\{\CG_n\}_n$ to a nonzero $p$-divisible group. 
For the basics of $p$-divisible groups, we refer to Tate \cite{Tat2}. 

Let $S$ be any scheme. A direct system $\{G_n\}_{n\geq 1}$ of flat group schemes over $S$ is called an \emph{increasing system} if the transition homomorphisms are closed immersions. 
A \emph{subsystem} of the increasing system $\{G_n\}_{n\geq 1}$ 
is an increasing system $\{H_n\}_{n\geq 1}$ over $S$ endowed with an injection $\varinjlim H_n \to \varinjlim G_n$ as fppf sheaves over $S$. 

There is a description of subsystem in terms of group schemes without going to the limit sheaves.
In fact, an injection 
$\varinjlim H_n \to \varinjlim G_n$ as fppf sheaves over $S$ is equivalent to a sequence 
$\{\phi_n:H_n\to G_{\tau(n)}\}_{n\geq1}$ of injections, compatible with the transition maps $H_n\to H_{n+1}$ and $G_{\tau(n)}\to G_{\tau(n+1)}$ for each $n\geq 1$, where
$\{\tau(n)\}_{n\geq1}$  is an increasing sequence of positive integers.
For each $n\geq1$, to find $\tau(n)$, it suffices to note that the identity map $i_n:H_n\to H_n$ is an element of $H_n(H_n)\subset H_\infty(H_n) \subset G_\infty(H_n)$, and thus it is contained in some $G_{\tau(n)}(H_n)$. This gives a morphism $H_n\to G_{\tau(n)}$. 

We have the following basic result. 
\begin{lem}    \label{divisible}
Let $k$ be any field of characteristic $p>0$. 
Let $\{G_n\}_{n\geq 1}$ be an increasing system of finite commutative group schemes of $p$-power order over $k$. 
Assume that the order of $G_n$ goes to infinity as $n\to \infty$, but the order of $G_n[p]$ is bounded as $n\to \infty$. 
Then $\{G_n\}_{n\geq 1}$ has a subsystem $\{H_n\}_{n\geq 1}$ which is a nonzero $p$-divisible group over $k$.
\end{lem}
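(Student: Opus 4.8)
The plan is to extract from the increasing system $\{G_n\}$ a cofinal family of $p$-divisible groups by exploiting the boundedness of $G_n[p]$, and then pass to a limit. First I would recall that a $p$-divisible group over $k$ of height $h$ is precisely an increasing system $\{H_n\}$ of finite commutative $p$-group schemes with $H_n$ of order $p^{nh}$, each transition $H_n \hookrightarrow H_{n+1}$ a closed immersion, and the multiplication-by-$p$ map $H_{n+1}\to H_{n+1}$ having image exactly $H_n$ — equivalently $[p]\colon H_{n+1}\to H_{n+1}$ factors as $H_{n+1}\twoheadrightarrow H_n \hookrightarrow H_{n+1}$. So the task is to find such a subsystem inside $\varinjlim G_n =: G_\infty$.

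The key mechanism is to look at $G_\infty[p^m] := \ker([p^m]\colon G_\infty\to G_\infty)$ as an fppf sheaf. Since each $G_n$ is killed by some power of $p$, we have $G_\infty = \bigcup_m G_\infty[p^m]$, and each $G_\infty[p^m]$ is a finite commutative group scheme over $k$: indeed it is a closed subgroup of $G_N$ for any $N$ with $G_\infty[p^m]\subset G_N$, which exists because $G_\infty[p^m]$ is the increasing union of the $G_n[p^m]$ and, over a field, the orders $|G_n[p^m]|$ are bounded (each $G_n$ has order at most $p^{(\text{rank of }G_n[p])\cdot m}$ once we know $G_n$ is killed by a fixed $p^m$... more carefully: $|G_n| \le |G_n[p]|^{m_n}$ where $p^{m_n}$ kills $G_n$, but this does not immediately bound things). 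The honest argument: fix $m$; the subschemes $G_n[p^m]$ form an increasing chain of closed subgroup schemes of the eventually-appearing $G_N$ — no, $N$ depends on $m$. Let me restate: for each fixed $m$, $\{G_n[p^m]\}_n$ is an increasing system of finite group schemes whose orders are bounded (since $G_n[p^m]$ embeds into $G_n[p]^{\oplus m}$ has order at most $|G_n[p]|^m \le C^m$ where $C$ bounds $|G_n[p]|$), hence the system $\{G_n[p^m]\}_n$ stabilizes, and its union $G_\infty[p^m]$ is a finite group scheme over $k$, of some $p$-power order $p^{a_m}$. Now I would set $H_m := G_\infty[p^m]$.

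It remains to check that $\{H_m\}_m$ is a $p$-divisible group, and that it is nonzero. Nonzero: since $|G_n|\to\infty$ and $G_\infty = \bigcup_m H_m$, the orders $p^{a_m}$ are unbounded, so some $H_m\neq 0$, hence (as $H_1\subseteq H_2\subseteq\cdots$) $H_1 = G_\infty[p]\neq 0$. For the $p$-divisibility axioms: clearly $H_m\subseteq H_{m+1}$ with $H_m = H_{m+1}[p]$'s analogue — precisely $H_m = \ker([p^m]$ on $H_{m+1})$ by construction. The remaining point, that $[p]\colon H_{m+1}\to H_{m+1}$ is faithfully flat onto $H_m$, I would get by a counting/flatness argument: $G_\infty$ is a union of finite flat (automatic over a field) group schemes, the sheaf-theoretic cokernel of $[p]\colon G_\infty\to G_\infty$ vanishes because $[p]$ is surjective as an fppf sheaf map on the colimit — this surjectivity is the one genuinely substantive input, and it follows from the structure theory: over $k$ one has the connected–étale sequence and the fact that a $p$-divisible union is detected by $[p]$ being an epimorphism, which I would verify by reducing to $\bar k$ and using that $G_\infty$, being an increasing union of finite $p$-group schemes with bounded $p$-torsion, is a finite iterated extension of copies of $\QQ_p/\ZZ_p$, $\mu_{p^\infty}$, and (the dangerous case) possibly something killed by a bounded power of $p$ — but the bounded-$p[p]$ hypothesis together with unboundedness of $|G_n|$ rules out the bounded-exponent possibility, forcing genuine $p$-divisibility.

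I expect the main obstacle to be exactly this last point: cleanly arguing that the colimit sheaf $G_\infty$ has $[p]$ surjective (equivalently, that $G_\infty$ is $p$-divisible as a sheaf), rather than merely $p$-power torsion with a finite $p$-torsion subgroup — the hypotheses "$|G_n|\to\infty$ but $|G_n[p]|$ bounded" are precisely calibrated to force this, and making that forcing rigorous (probably via the structure of $G_\infty[p^m]$ over $\bar k$ and a dimension count, i.e.\ showing $a_{m+1} - a_m$ is eventually constant and equals $a_1$) is where the real work lies. Everything else is bookkeeping with increasing systems and the equivalence between $p$-divisible groups and such systems.
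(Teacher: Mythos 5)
The construction $H_m := G_\infty[p^m]$ does not work, and the claim that the hypotheses ``rule out the bounded-exponent possibility'' is exactly where the argument breaks. The hypotheses ensure that $G_\infty$ is not \emph{entirely} of bounded exponent, but they do not forbid a bounded-exponent piece sitting alongside a genuinely divisible one. Concretely, take $G_n = \mu_{p^n}\times \alpha_p$. Then $|G_n| = p^{n+1}\to\infty$ and $|G_n[p]| = p^2$ is constant, so the hypotheses are satisfied; yet $G_\infty[p^m] = \mu_{p^m}\times\alpha_p$ has order $p^{m+1}$, and $[p]\colon G_\infty[p^{m+1}]\to G_\infty[p^{m+1}]$ has image $\mu_{p^m}\times 0$, a proper subgroup of $G_\infty[p^m]$. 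So $\{G_\infty[p^m]\}_m$ is not a $p$-divisible group: the orders do not grow by a fixed factor $p^h$, and $[p]$ is not surjective onto the previous term. The sheaf-surjectivity of $[p]$ on $G_\infty$, which you flagged as the ``one genuinely substantive input,'' is precisely what fails, since $[p]$ kills $\alpha_p$.

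The fix, which is what the paper does, is to first discard the non-divisible part: set $H_\infty := \bigcap_{a\geq 1} p^a G_\infty$ (the infinitely $p$-divisible subsheaf of $G_\infty$) and then take $H_m := H_\infty[p^m]$. In the example above, $p^a G_\infty = \mu_{p^\infty}\times 0$ for $a\geq 1$, so $H_\infty = \mu_{p^\infty}$ and $H_m = \mu_{p^m}$, a genuine $p$-divisible group. With this definition, $[p^m]\colon H_\infty\to H_\infty$ is surjective with kernel $H_m$ essentially by construction, which is what makes the $p$-divisibility axiom hold. One must then still verify that each $H_m$ is representable by a finite group scheme (via stabilization of the decreasing chain $\{(p^a G_\infty)[p^m]\}_a$, using the boundedness of $|G_n[p^m]|$) and that $H_1\neq 0$ (if $H_1=0$ then $p^a G_\infty = 0$ for some $a$, contradicting the unboundedness of the exponent of $G_n$). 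Your preliminary observations, that $|G_n[p^m]|\le C^m$ for fixed $m$ and that $|G_n|\to\infty$ forces the exponent of $G_n$ to blow up, are correct and are exactly the inputs the paper uses; they just need to be applied to $H_\infty = \bigcap_a p^a G_\infty$ rather than to $G_\infty$ itself.
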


\begin{proof}
The idea can be easily illustrated in terms of abelian groups.
Assume for the moment that $\{G_n\}_{n\geq 1}$ is an increasing system of abelian groups satisfying similar conditions. 
Let $G_\infty$ be the direct limit of $\{G_n\}_{n\geq 1}$. 
By definition, $G_\infty$ is an infinite torsion group whose element has $p$-power orders, but $G_\infty[p]$ is finite. 
A structure theorem asserts that 
$G_\infty\simeq (\QQ_p/\ZZ_p)^r\oplus F$ for some positive integer $r$ and some finite group $F$.
Then $H_\infty=(\QQ_p/\ZZ_p)^r$ is the subgroup of $G_\infty$ that gives us a $p$-divisible group. 
This subgroup consists of exactly the infinitely divisible elements of 
$G_\infty$, and thus can be extracted as $H_\infty=\cap_{a\geq 1} p^aG_\infty$. Then $H_m=H_\infty[p^m]=\cap_{a\geq 1} (p^aG_\infty)[p^m]$ for any $m\geq1$.

Go back to the group schemes $G_n$ in the lemma. 
By assumption, the order of $G_n[p]$ is bounded by some integer $p^r$. 
For any $m$, the order of $G_n[p^m]$ is bounded by $p^{mr}$, which can be checked by induction using the exact sequence 
$$
0\lra G_n[p]\lra G_n[p^{m+1}]\stackrel{[p]}{\lra} G_n[p^{m}].
$$
Since the order of $G_n$ goes to infinity, the exact exponent of $G_n$, which is the smallest positive integer $N_n$ such that the multiplication $[N_n]:G_n\to G_n$ is the zero map, also goes to infinity. 

Now we construct the $p$-divisible group. 
Denote $G_\infty=\varinjlim G_n$ as an fppf sheaf over $\Spec(k)$. 
Denote $H_\infty=\cap_{a\geq 1} p^aG_\infty$ as a subsheaf of 
$G_\infty$.
Denote $H_m=H_\infty[p^m]$ as a subsheaf of $H_\infty$ for any $m\geq1$. 
We claim that the system $\{H_m\}_{m\geq1}$, where the transition maps are injections as subsheaves of $H_\infty$, is a nonzero $p$-divisible group over $k$. 

First, every $H_m$ is representable by a finite group scheme over $k$.
In fact, $H_m$ is the intersection of the decreasing sequence 
$\{(p^aG_\infty)[p^m]\}_{a\geq1}$.
Since the order of  $\{G_n[p^m]\}_n$ is bounded, the increasing sequence $\{(p^aG_n)[p^m]\}_n$ of finite group schemes
is eventually stationary. 
This stationary term is exactly $(p^aG_\infty)[p^m]$.
The sequence $\{(p^aG_\infty)[p^m]\}_{a\geq 1}$ of finite group schemes is decreasing, and thus eventually stationary.
This stationary term is exactly $H_m$.  

Second, $H_1\neq 0$. Otherwise, 
$(p^aG_\infty)[p]=0$ for some $a\geq 1$. Then $p^aG_\infty=0$ and thus $p^aG_n=0$ for all $n$. This contradicts to the fact that the exponent of $G_n$ goes to infinity. Thus $H_1\neq 0$. 

By definition, the map $[p^m]:H_\infty\to H_\infty$ is surjective with kernel $H_m$.
It follows that the morphism 
$[p^m]:H_{m+1}\to H_{m+1}$ has kernel $H_m$ and image $H_1$.
This implies that $\{H_m\}_{m\geq1}$ is a $p$-divisible group.
This finishes the proof.
\end{proof}

\begin{remark}
If $k$ is perfect, one can prove the lemma by Dieudonn\'e modules. 
In fact, take the covariant Dieudonn\'e module of the sequence $\{G_n\}_{n\geq 1}$, apply the above construction of abelian groups to the Dieudonn\'e modules, and transfer the result back to get a $p$-divisible group by the equivalence between finite group schemes and Dieudonn\'e modules. \end{remark}

\subsubsection*{Algebraicity}

Now we prove Proposition \ref{positivity intermediate2}.
Recall that we have an increasing system $\{\CG_n\}_n$ of finite and flat closed subgroup schemes of $\CA$ of constant type. 
The transition maps are necessarily of constant type by Lemma \ref{constant1}(3).
Thus $\{\CG_n\}_n$ is the base change of an increasing system $\{G_n\}_n$ of finite group schemes over $k$. 
By Lemma \ref{divisible}, the system $\{G_n\}_n$ has a subsystem $H_\infty=\{H_n\}_n$, which is a nonzero $p$-divisible group  over $k$.
Denote by $\CH_\infty=\{\CH_n\}_n$ the base change of $\{H_n\}_n$ to $S$, which is a $p$-divisible group over $S$, and also a subsystem of 
$\{\CG_n\}_n$. Then $\CH_\infty=\{\CH_n\}_n$ is a subsystem of 
$\CA[p^\infty]=\{\CA[p^n]\}_n$.
We are going to ``lift'' $\CH_\infty=\{\CH_n\}_n$ to an abelian scheme over $S$ of constant type. 

By \cite[Thm. 6.6]{Con}, the $(K/k)$-trace of $A_{K}$ is nonzero if and only if the $(Kk'/k')$-trace of $A_{K'}$ is nonzero for any extension $k'/k$. Therefore, in the proposition, we can replace $k$ by any finite extension. In particular, we can assume that there is a point $s\in S(k)$ such that $\CA$ has good reduction at $s$. The fiber $C=\CA_s$ is an abelian variety over $k$, and the $p$-divisible group $C[p^\infty]$ has a $p$-divisible subgroup $\CH_{\infty,s}$, which is canonically isomorphic to $H_\infty$. 
We will prove that $\Hom(C_K,A)\neq 0$ from the fact that they share the same $p$-divisible subgroup $H_{\infty,K}$.

To proceed, we need two fundamental theorems on $p$-divisible groups of abelian varieties over finitely generated fields. 

\begin{thm} \label{hom}
Let $K$ be a finitely generated field over a finite field $\FF_p$.
Let $A$ and $B$ be abelian varieties over $K$. 
Then the canonical map 
$$
\Hom(A,B)\otimes_\ZZ\ZZ_p \lra \Hom(A[p^\infty], B[p^\infty])
$$
is an isomorphism.
\end{thm}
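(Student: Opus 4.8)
The statement is the $p$-adic analogue of Tate's isogeny theorem, and I would prove injectivity and surjectivity by quite different means.

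\emph{Injectivity} is elementary. By the Lang--N\'eron theorem $\Hom(A,B)$ is a finitely generated free $\ZZ$-module, and $M_p:=\Hom(A[p^\infty],B[p^\infty])$ is a torsion-free, $p$-adically separated $\ZZ_p$-module: a homomorphism of $p$-divisible groups vanishing on each $A[p^n]$ is zero, and one lying in $p^nM_p$ kills $A[p^n]$. If $f\in\Hom(A,B)$ kills $A[p^n]$, then $f$ factors through $[p^n]\colon A\to A$, so $f=p^ng$ with $g\in\Hom(A,B)$; hence $\Hom(A,B)\cap p^nM_p=p^n\Hom(A,B)$ for every $n$. Reducing modulo $p^n$ and passing to the inverse limit, $\Hom(A,B)\otimes_\ZZ\ZZ_p$ injects into $\varprojlim_n M_p/p^nM_p$, and this factors through $M_p$ by separatedness.

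\emph{Surjectivity} is the substance of the theorem; it is the $p$-adic Tate conjecture for abelian varieties over finitely generated fields of characteristic $p$, which is due to de Jong \cite{Jon}. The plan: first spread $A$ and $B$ out to abelian schemes $\CA,\CB$ over a smooth integral affine $\FF_p$-scheme $S$ with $\FF_p(S)=K$. Homomorphisms of abelian schemes over a normal integral base are determined by and extend from the generic fibre, so $\Hom_S(\CA,\CB)=\Hom(A,B)$; and by de Jong's rigidity theorem for homomorphisms of $p$-divisible groups over a normal base, $\Hom_S(\CA[p^\infty],\CB[p^\infty])=M_p$. Now take any $\Psi\in M_p$, regarded as a homomorphism $\CA[p^\infty]\to\CB[p^\infty]$ over $S$. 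For each closed point $s\in S$ the residue field is finite, so Tate's isogeny theorem for abelian varieties over finite fields (at the prime $p$, via Dieudonn\'e theory) shows that the specialization $\Psi_s$ is algebraic, i.e., induced by an element of $\Hom(\CA_s,\CB_s)\otimes_\ZZ\ZZ_p$. The crux --- and the content of de Jong's theorem --- is that this pointwise algebraicity propagates: using his equivalence between $p$-divisible groups over $S$ and Dieudonn\'e $F$-crystals, $\Psi$ becomes a horizontal, Frobenius-equivariant homomorphism of the crystals of $\CA$ and $\CB$, and the rigidity of such crystals over the connected base $S$, together with the matching of ranks furnished by Tate at the closed points, forces $\Psi$ to lie in the image of $\Hom_S(\CA,\CB)\otimes\ZZ_p$. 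Restricting to the generic point $\Spec K$ then finishes the proof.

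\emph{Main obstacle.} The hard step is exactly this last propagation: manufacturing a global homomorphism of abelian schemes (up to $p$-power denominators) from a homomorphism of $p$-divisible groups that is algebraic at each finite-field point. Tate's methods suffice over finite fields and Zarhin--Faltings give the prime-to-$p$ Tate conjecture over finitely generated fields, but the $p$-part over a positive-characteristic function field genuinely requires de Jong's crystalline machinery, and I expect essentially all the work to lie there; the spreading-out, the specialization, and the bookkeeping with ranks are routine.
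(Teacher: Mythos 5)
Your proposal matches the paper's treatment: the paper itself gives no proof of this statement and simply defers to de Jong \cite{Jon} for general $K$ (and to Tate's $\ell$-adic method for finite $K$), which is exactly the reference you invoke for the substantive (surjectivity) direction. One small correction to your injectivity argument: the finite generation and $\ZZ$-freeness of $\Hom(A,B)$ is a general fact about abelian varieties over any field (via rigidity and the $\ell$-adic or crystalline representation), not a consequence of the Lang--N\'eron theorem, which concerns finite generation of $A(K)$.
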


\begin{thm} \label{semisimple}
Let $A$ be an abelian variety over a finite field $k$ of characteristic $p>0$. 
Then the $p$-divisible group $A[p^\infty]$ is semi-simple, i.e., isogenous to a direct sum of simple $p$-divisible groups over $k$. 
\end{thm}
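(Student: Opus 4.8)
The statement is classical, being part of the circle of ideas around Tate's isogeny theorem; the plan is to deduce it from Theorem~\ref{hom} together with standard structural properties of the isogeny category of $p$-divisible groups over a finite field. Let $\mathcal{P}$ denote the category of $p$-divisible groups over $k$ up to isogeny. By (contravariant) Dieudonné theory over the perfect field $k$, $\mathcal{P}$ is a $\QQ_p$-linear abelian category which identifies with the full subcategory of isocrystals over $k$ whose slopes lie in $[0,1]$; it is closed under subobjects and quotients, and every object has finite length. The assertion to be proved is that $A[p^\infty]$ is a \emph{semisimple} object of $\mathcal{P}$.

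\emph{Step 1: the endomorphism algebra is semisimple.} It is classical that $\End^0(A)=\End(A)\otimes_\ZZ\QQ$ is a semisimple $\QQ$-algebra: by Poincaré reducibility $A$ is isogenous to $\prod_i A_i^{n_i}$ with the $A_i$ simple and pairwise non-isogenous, so $\End^0(A)\cong\prod_i M_{n_i}(D_i)$ with each $D_i=\End^0(A_i)$ a division algebra. By Theorem~\ref{hom},
$$\End_{\mathcal{P}}(A[p^\infty])\;=\;\End(A[p^\infty])\otimes_{\ZZ_p}\QQ_p\;\cong\;\End^0(A)\otimes_\QQ\QQ_p,$$
which is a semisimple $\QQ_p$-algebra because $\QQ$ is a perfect field.

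\emph{Step 2: a semisimple endomorphism algebra forces semisimplicity.} It then suffices to establish the general fact that an object $G$ of $\mathcal{P}$ with $\End_{\mathcal{P}}(G)$ semisimple is a semisimple object. Here the finiteness of $k$ enters through the linear-algebra identity that, for isocrystals $N_1,N_2$ over the finite field $k$,
$$\dim_{\QQ_p}\mathrm{Ext}^1(N_1,N_2)\;=\;\dim_{\QQ_p}\Hom(N_1,N_2),$$
which holds because both arise, as the cokernel and the kernel respectively, from one and the same $\QQ_p$-linear map $\psi\mapsto F_{N_2}\circ\psi-\psi\circ F_{N_1}$ between two $\QQ_p$-vector spaces of the same finite dimension built from $N_1,N_2$ (here $F_{N_i}$ is the Frobenius of $N_i$, and $\psi$ runs over $\Hom_{W(k)[1/p]}(N_1,N_2)$). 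Consequently $\mathrm{Ext}^1$ vanishes between non-isomorphic simple objects of $\mathcal{P}$, so $\mathcal{P}$ is the product of its blocks $\mathcal{P}_{[S]}$, where $S$ runs over isomorphism classes of simple objects and $\mathcal{P}_{[S]}$ consists of the objects all of whose composition factors are isomorphic to $S$. Within each block $\End$ is semisimple precisely on the semisimple objects, since a nonsplit self-extension of $S$ contributes a nonzero nilpotent endomorphism. Applying this to $G=A[p^\infty]$ together with Step~1 proves the theorem. Equivalently, one may run Step~2 through the classical semisimplicity of the Frobenius endomorphism $\pi_A$ on the rational Dieudonné module of $A[p^\infty]$ (a consequence of the positivity of the Rosati involution; see \cite{Tat1}), from which the decomposition into a direct sum of simple isocrystals follows by a slope-by-slope analysis.

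\emph{Main obstacle.} Everything is formal once one grants Step~1 (that is, Theorem~\ref{hom}) and the one non-formal input in Step~2: the coincidence $\dim\mathrm{Ext}^1=\dim\Hom$ for isocrystals over a finite field, equivalently the semisimplicity of Frobenius. The implication ``semisimple endomorphism algebra $\Rightarrow$ semisimple object'' fails in a general finite-length abelian category, so this finite-field input cannot be dropped; verifying it (or the Frobenius-semisimplicity reformulation) is the real content of the proof.
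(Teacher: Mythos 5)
The paper offers no proof of its own here: it simply records that Theorem~\ref{semisimple} (and Theorem~\ref{hom} over a finite field) ``can be proved by an easy modification of the $\ell$-adic argument of Tate'', whose standard form runs through semisimplicity of the Frobenius $\pi_A$. The route you sketch at the end of Step~2 --- $\QQ[\pi_A]$ is the center of the semisimple algebra $\End^0(A)$, hence a product of number fields, so $\pi_A$ satisfies a separable polynomial and acts semisimply on the rational Dieudonn\'e module, which therefore decomposes along the primary decomposition of $\pi_A$ into $F$-stable pieces --- is the closest match to what the paper cites and is also the cleanest line.

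Your primary route (Step~1, the Euler-characteristic identity $\dim_{\QQ_p}\mathrm{Ext}^1 = \dim_{\QQ_p}\Hom$ for isocrystals over a finite field, the resulting block decomposition, and ``semisimple $\End$ implies semisimple object within a block'') is a genuinely different packaging and does lead to a complete proof, but one step is stated too loosely to stand as written. You justify the block-wise implication by saying a nonsplit self-extension of $S$ ``contributes a nonzero nilpotent endomorphism''. That by itself proves nothing, since semisimple algebras such as $M_n(\QQ_p)$ are full of nonzero nilpotents; what is needed is a nonzero nilpotent two-sided \emph{ideal}. The fix is short: if $G$ lies in the block of $S$ and is not semisimple, its socle filtration $0=\mathrm{soc}^0\subsetneq\mathrm{soc}^1\subsetneq\cdots\subsetneq\mathrm{soc}^m=G$ has $m\ge 2$; every endomorphism preserves this filtration, so there is an algebra map $\End(G)\to\prod_i\End(\mathrm{soc}^i/\mathrm{soc}^{i-1})$ whose kernel $I$ consists of nilpotent endomorphisms and is a two-sided ideal; and $I\ne 0$ because a nonzero composite $G\twoheadrightarrow G/\mathrm{soc}^{m-1}\to S\hookrightarrow\mathrm{soc}^1\hookrightarrow G$ lies in $I$. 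Hence $\End(G)$ has a nonzero nilpotent ideal and is not semisimple. With that patch Step~2 is complete, and combined with Step~1 (which is correct as stated, and which is where Theorem~\ref{hom} and Poincar\'e reducibility enter) it proves the theorem.
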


The more classical $\ell$-adic analogues of theorems are the Tate conjectures and the semi-simplicity conjecture proved by Tate and Zarhin.
For the current $p$-adic version, Theorem \ref{hom} for a finite field $K$ and Theorem \ref{semisimple} can be proved by an easy modification of the $\ell$-adic argument of Tate \cite{Tat1}.
For general $K$, Theorem \ref{hom} is proved by de Jong \cite{Jon}.

Go back to the proof of Proposition \ref{positivity intermediate2}. 
By Theorem \ref{semisimple}, the injection $H_{\infty}\to C[p^\infty]$ implies the existence of a surjection $C[p^\infty]\to H_{\infty}$. Take a base change to $K$ and compose with $H_{\infty,K}\to A[p^\infty]$. We have a nonzero element of 
$\Hom(C_K[p^\infty], A[p^\infty])$. 
By Theorem \ref{hom}, we have
$\Hom(C_K,A)\neq 0$. 
This proves the proposition.

\section{Purely inseparable points on torsors} \label{section partial}

The goal of this section is to prove Theorem \ref{finiteness main}. 
In \S \ref{section torsor}, we review some basic results on torsors. 
In \S \ref{section inseparable}, we prove Theorem \ref{finiteness main}. 
The idea of the proof is explained on \S \ref{section idea}.

\subsection{Preliminary results on torsors} \label{section torsor}

\subsubsection*{N\'eron model of locally trivial torsor}

Let $S$ be a Dedekind scheme and $K$ be its function field. 
Let $X$ be a smooth and separated scheme of finite type over $K$. 
Recall from \cite[\S 1.2, Def. 1]{BLR} that a \emph{N\'eron model} $\CX$ of $X$ is a smooth and separated 
$S$-scheme of finite type with a $K$-isomorphism $X\to\CX_K$ satisfying the 
\emph{N\'eron mapping property} that, for any smooth $S$-scheme $\CY$ and any $K$-morphism $\CY_K\to X$, there is a unique $S$-morphism $\CY\to \CX$ extending the morphism $\CY_K\to X$. It is immediate that a N\'eron model is unique if it exists. 

The main goal of \cite{BLR} is a complete and modern proof of the statement that any abelian variety over $K$ admits a N\'eron model. Implicitly, the book contains the following result for locally trivial torsors of abelian varieties.

\begin{thm} \label{neron torsor}
Let $A$ be an abelian variety over $K$ and $X$ be an $A$-torsor over $K$. 
Assume that $X$ is trivial over the completion $K_v$ of $K$ with respect to the discrete valuation induced by any closed point $v\in S$. 
Then $X$ (resp. $A$) admits a unique N\'eron model $\CX$ (resp. $\CA$) over $S$.
Moreover, the torsor structure $A\times_KX\to X$ extends uniquely to an $S$-morphism $\CA\times_S\CX\to \CX$, which makes $\CX$ an $\CA$-torsor. 
\end{thm}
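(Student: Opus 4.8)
The plan is to assemble the statement from the machinery of \cite{BLR}. The N\'eron model $\CA$ of the abelian variety $A$ over $S$ is the main existence theorem of \cite{BLR}, so the real content is the assertion about $X$. The first observation I would record is that $X$ is a form of $A$: since $H^1$ of an abelian variety over a separably closed field vanishes, $X_{\bar K}\simeq A_{\bar K}$, and therefore $X$ is smooth, proper and geometrically connected over $K$, all three properties descending along $\bar K/K$. In particular $X$ is bounded over $K$ in the sense of \cite{BLR}, and so is its set of points over any strictly henselian discrete valuation ring dominating a local ring of $S$.

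Next I would build the N\'eron model of $X$ locally. Fix a closed point $v\in S$, put $R=\CO_{S,v}$ (an excellent discrete valuation ring with fraction field $K$), let $\widehat K_v$ be the completion, and let $R^{\mathrm{sh}}$ be a strict henselization with fraction field $K^{\mathrm{sh}}_v$. By hypothesis $X$ is trivial over $\widehat K_v$, so $X(\widehat K_v)\neq\emptyset$. Choosing a proper $R$-model $\mathcal Z$ of $X$ with $\mathcal Z_K=X$ (which exists since $X$ is proper over $K$), the valuative criterion turns the $\widehat K_v$-point of $X$ into a point of $\mathcal Z(\widehat{R^{\mathrm{sh}}})$, and then Greenberg's approximation theorem over the excellent henselian ring $R^{\mathrm{sh}}$ yields $\mathcal Z(R^{\mathrm{sh}})\neq\emptyset$, hence $X(K^{\mathrm{sh}}_v)\neq\emptyset$. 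Since $X(K^{\mathrm{sh}}_v)$ is now nonempty and bounded, $X$ admits a weak N\'eron model over $R$ with nonempty special fibre by \cite{BLR}; combined with boundedness of $X$, the N\'eron model criterion of \cite{BLR} produces the N\'eron model $\CX_v$ of $X$ over $R$. Gluing the finitely many $\CX_v$ together with $X$ over the generic point gives a smooth separated $S$-scheme $\CX$ of finite type which is the N\'eron model of $X$ over $S$; uniqueness is automatic from the mapping property.

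Finally I would install the torsor structure. Applying the N\'eron mapping property of $\CX$ to the smooth $S$-scheme $\CA\times_S\CX$ and the $K$-morphism $A\times_K X\to X$ extends the action to a unique $S$-morphism $m\colon\CA\times_S\CX\to\CX$; the torsor axioms propagate because they hold on the schematically dense generic fibre and $\CX$ is separated. To see that $m$ is a torsor action I would use that, by construction and compatibility of N\'eron models with the \'etale base change $R^{\mathrm{sh}}\to R$, one has $\CX(R^{\mathrm{sh}})=X(K^{\mathrm{sh}}_v)\neq\emptyset$ for every $v$; a section of $\CX$ over $R^{\mathrm{sh}}$ trivialises $\CX$ as an $\CA$-scheme-with-action there, so the morphism $\CA\times_S\CX\to\CX\times_S\CX$ is an isomorphism \'etale-locally on $S$, hence an isomorphism. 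Thus $\CX$ is an $\CA$-torsor over $S$ restricting to the given $A$-torsor $X$ over $K$.

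I expect the main obstacle to be the local step at the finitely many bad points: upgrading triviality of $X$ over the completion $\widehat K_v$ to the existence of a $K^{\mathrm{sh}}_v$-point, which is exactly what makes BLR's criterion applicable and is the only place the ``locally trivial'' hypothesis, the properness of $X$, and Greenberg's theorem enter. A cleaner-sounding but essentially equivalent alternative would be to show that the class of $X$ in $H^1(K,A)$ lifts to $H^1_{\et}(S,\CA)$ — using local triviality at each $v$ and excision for the \'etale cohomology of the smooth group scheme $\CA$ — and then take $\CX$ to be the corresponding $\CA$-torsor, deriving the N\'eron property of $\CX$ from that of $\CA$ by \'etale descent of morphisms; the work is again concentrated at the bad points. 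A secondary, more routine point is the bookkeeping in the gluing: that the local models $\CX_v$ agree canonically over overlaps (a generically trivial $\CA$-torsor over an open of $S$ is trivial, since a generic section extends by the N\'eron property of $\CA$) and that the extended $m$ is a genuine torsor action rather than merely an action; both are handled by the separatedness of $\CX$ together with the \'etale-local triviality noted above.
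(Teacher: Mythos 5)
Your proposal is correct in substance and reaches the same endpoint, but it replaces the paper's one-line local argument with a genuinely different mechanism. The paper establishes the local N\'eron model of $X$ over $O_{S,v}$ by a single appeal to \cite[\S 6.5, Rem.~5]{BLR}, which is a descent statement for N\'eron models along the faithfully flat extension $O_{S,v}\to \widehat{O_{S,v}}$: since $X$ trivializes over $K_v$, its N\'eron model over $\widehat{O_{S,v}}$ is just $\CA_{\widehat{O_{S,v}}}$, and Rem.~5 pushes this down to $O_{S,v}$. You instead take a proper $O_{S,v}$-model of $X$, use the valuative criterion to get an integral point over $\widehat{O_{S,v}^{\mathrm{sh}}}$, and then invoke Greenberg/Artin approximation over the excellent henselian local ring $O_{S,v}^{\mathrm{sh}}$ to manufacture a $K^{\mathrm{sh}}_v$-point; at that point the relevant BLR input is the ``torsor with a $K^{\mathrm{sh}}$-point under a group with a N\'eron model'' statement (also in \cite[\S 6.5]{BLR}), not a generic ``weak N\'eron model + boundedness $\Rightarrow$ N\'eron model'' criterion. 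That phrasing in your write-up is a bit off: boundedness and a nonempty weak N\'eron model alone do not yield a N\'eron model for an arbitrary smooth scheme; the torsor structure is what makes the BLR smoothening/twisting argument go through, and you should cite the torsor version directly. With that correction, your route is a legitimate and more explicit alternative (it makes visible the passage from the completed to the henselized local ring, which the paper outsources to Rem.~5), and the gluing via \cite[\S 1.4, Prop.~1]{BLR} and the extension of the action by the N\'eron mapping property, checked to be a torsor structure \'etale-locally, match the paper exactly. Your second alternative -- lifting the class of $X$ from $H^1(K,A)$ to $H^1_{\et}(S,\CA)$ and taking the corresponding $\CA$-torsor -- is also sound and arguably cleaner, though it is not the route either you (in your main argument) or the paper takes.
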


\begin{proof}
We sketch a proof for the $\CX$-part in the following. 
\begin{itemize}
\item[(1)] The local N\'eron model exists. Namely, for any closed point $v\in S$, 
the N\'eron model $\CX_{O_{S,v}}$ of $X$ over the local ring $O_{S,v}$ exists.
Moreover, $\CX_{O_{S,v}}$ is a natural $\CA_{O_{S,v}}$-torsor over $O_{S,v}$. 
It is a consequence of \cite[\S 6.5, Rem. 5]{BLR} by taking $R=O_{S,v}$ and $R'$  to be the completion of $O_{S,v}$.

\item[(2)]
The global N\'eron model $\CX$ over $S$ exists by patching the local ones. 
This follows from \cite[\S 1.4, Prop. 1]{BLR}.

\item[(3)] 
The torsor structure extends to $(\CA, \CX)$. 
By the N\'eron mapping property, the torsor structure map $A\times_KX\to X$ extends uniquely to 
a morphism $\CA \times_S \CX \to \CX$. 
To see the later gives a torsor structure, we need to verify that the induced map 
$\CA \times_S \CX \to \CX \times_S \CX$ is an isomorphism. 
This is true because it is true over $O_{S,v}$ for every $v$. 
\end{itemize}
\end{proof}

We can also define Hodge bundles of N\'eron models of torsors. 
In fact, in the setting of Theorem \ref{neron torsor}, define \emph{the Hodge bundle associated to $X$} to be 
$$
\overline\Omega_{X}
=\overline\Omega_{\CX/S}
=\pi'_*\Omega_{\CX/S}^1.
$$
Here $\pi':\CX\to S$ is the structure morphism. 
If $X=A$, this agrees with the definition of Hodge bundles of abelian varieties in \S\ref{section quotient} by viewing $A$ as an abelian variety by Lemma \ref{invariant differential}.

Similar to Lemma \ref{invariant differential}, the natural morphism
$$
\pi'^*\overline\Omega_{\CX/S}\lra \Omega_{\CX/S}^1
$$
 is an isomorphism. In fact, 
take a faithfully flat base change $S'\to S$ trivializing $\CX$. 
Then the map becomes an isomorphism after the base change, and it is an isomorphism before the base change by the flat descent. 

The following result asserts that $\overline\Omega_{X}$ is a vector bundle on $S$ which has very similar numerical property as 
$\overline\Omega_{A}$.  

\begin{lem} \label{hodge bundle}
Let $\psi:S'\to S$ be a morphism such that the $S$-torsor
$\CX$ is trivial over $S'$. 
Then there is a natural isomorphism
$$
 \psi^*\overline\Omega_{\CX/S}
\lra
 \psi^*\overline\Omega_{\CA/S}
$$
of $\CO_{S'}$-modules, depending on the choice of  an 
$S$-morphism $S'\to \CX$.  
\end{lem}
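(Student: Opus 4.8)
The plan is to reduce the statement to $S'$, trivialize the torsor there using the chosen section, and transport the tautological identification $\pi'^*\overline\Omega_{\CX/S}\cong\Omega^1_{\CX/S}$ recorded just before the lemma. First I would unwind the data: giving an $S$-morphism $S'\to\CX$ is the same as giving a section $\sigma:S'\to\CX_{S'}$ of the base change $\pi'_{S'}:\CX_{S'}:=\CX\times_S S'\to S'$, and such a section exists precisely because $\CX$ is trivial over $S'$. Since $\CX$ is an $\CA$-torsor, $\CX_{S'}$ is an $\CA_{S'}$-torsor, and the presence of $\sigma$ makes it trivial: the map
$$\rho_\sigma:\CA_{S'}\lra\CX_{S'},\qquad a\longmapsto a\cdot\sigma,$$
obtained from the action, is an isomorphism of $S'$-schemes carrying the identity section $e$ of $\CA_{S'}$ to $\sigma$ (the standard fact that a torsor with a section is trivial, immediate from the isomorphism $\CA\times_S\CX\to\CX\times_S\CX$ that is part of the torsor structure, cf. Theorem \ref{neron torsor}).

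Next I would identify $\psi^*\overline\Omega_{\CX/S}$ with $\sigma^*\Omega^1_{\CX_{S'}/S'}$. Pulling back the isomorphism $\pi'^*\overline\Omega_{\CX/S}\to\Omega^1_{\CX/S}$ along $\CX_{S'}\to\CX$ and combining it with the base-change identification $\Omega^1_{\CX_{S'}/S'}\cong(\CX_{S'}\to\CX)^*\Omega^1_{\CX/S}$, one gets a canonical isomorphism $(\pi'_{S'})^*\psi^*\overline\Omega_{\CX/S}\cong\Omega^1_{\CX_{S'}/S'}$; applying $\sigma^*$ and using $\pi'_{S'}\circ\sigma=\mathrm{id}_{S'}$ yields $\psi^*\overline\Omega_{\CX/S}\cong\sigma^*\Omega^1_{\CX_{S'}/S'}$. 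Then I transport along $\rho_\sigma$: since $\rho_\sigma$ is an isomorphism, $\rho_\sigma^*\Omega^1_{\CX_{S'}/S'}\cong\Omega^1_{\CA_{S'}/S'}$, and pulling back along $e$ with $\rho_\sigma\circ e=\sigma$ gives $\sigma^*\Omega^1_{\CX_{S'}/S'}\cong e^*\Omega^1_{\CA_{S'}/S'}=\overline\Omega_{\CA_{S'}/S'}$. Finally $\overline\Omega_{\CA_{S'}/S'}\cong\psi^*\overline\Omega_{\CA/S}$, because the formation of $\Omega^1$ commutes with base change and the identity section is stable under base change, so the Hodge bundle $e^*\Omega^1_{\CA/S}$ does too. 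Composing these isomorphisms produces the desired $\psi^*\overline\Omega_{\CX/S}\to\psi^*\overline\Omega_{\CA/S}$, visibly depending only on $\sigma$, i.e.\ on the chosen $S$-morphism $S'\to\CX$.

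The only subtle point — and the step I would be most careful about — is that $\overline\Omega_{\CX/S}$ is \emph{defined} as the pushforward $\pi'_*\Omega^1_{\CX/S}$ rather than as a pullback of $\Omega^1_{\CX/S}$ along an identity section (a torsor has no such section), so one cannot simply commute $\psi^*$ past the definition. The resolution is exactly the isomorphism $\pi'^*\overline\Omega_{\CX/S}\cong\Omega^1_{\CX/S}$ recorded right before the lemma, which reduces everything to the evident base-change behavior of Kähler differentials and of the split abelian scheme $\CA_{S'}$. Everything else is routine bookkeeping with sections and pullbacks; if desired one can also record how the isomorphism changes when $\sigma$ is replaced by another section (the two sections differ by a section of $\CA_{S'}$, acting by translation), but the lemma only asks for an isomorphism attached to a fixed choice.
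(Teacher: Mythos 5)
Your proof is correct and is essentially the same argument the paper gives: both trivialize $\CX$ over $S'$ via the chosen section $\sigma$, invoke the isomorphism $\pi'^*\overline\Omega_{\CX/S}\cong\Omega^1_{\CX/S}$ recorded before the lemma to rewrite $\psi^*\overline\Omega_{\CX/S}$ as the pullback of $\Omega^1_{\CX/S}$ along $\sigma$, and then use the section-induced isomorphism $\CA_{S'}\cong\CX_{S'}$ together with base-change for Kähler differentials. Your version merely spells out the intermediate identifications that the paper compresses into a single chain of $\simeq$'s.
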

\begin{proof}
Take the base change $\psi:S'\to S$ which trivializes $\CX$. 
Denote 
$$\CX'=\CX\times_SS', \quad
\CA'=\CA\times_SS'.$$
The base change gives a canonical section $S'\hookrightarrow \CX'$ lifting $S'\to \CX$. Using this section, we can view the $\CA'$-torsor $\CX'$ as a group scheme over $S'$. 
It follows that 
$$
 \psi^*\overline\Omega_{\CX/S}
=
(\Omega_{\CX/S}^1) |_{S'}
\simeq (\Omega_{\CX'/S'}^1) |_{S'}
\simeq (\Omega_{\CA'/S'}^1) |_{S'}
\simeq  \psi^*\overline\Omega_{\CA/S}.
$$
The result follows. 
\end{proof}

\subsubsection*{Functoriality and base change}

We first present a basic result on the relative Frobenius morphism of abelian varieties. Let $A$ be an abelian variety over a field $K$ of characteristic $p$. 
Consider the following two maps:
\begin{itemize}
\item[(1)] (Functoriality map) The map 
$$H^1(F^n):H^1(K, A)\lra H^1(K, A^{(p^n)})$$ 
induced by the relative Frobenius morphism $F^n:A\to A^{(p^n)}$ via functoriality.  
It sends an $A$-torsor $X$ to the $A^{(p^n)}$-torsor $X/(A[F^n])$, where $A[F^n]$ is the kernel of $F^n:A\to A^{(p^n)}$.
\item[(2)] (Base change map) The map 
$$(F_{K}^n)^*:H^1(K, A)\lra H^1(K, A^{(p^n)})$$ 
induced by the morphism $F_{K}^n: \Spec K\to \Spec K$ of schemes, where $A^{(p^n)}$ is viewed as the pull-back of the \'etale sheaf $A$ via $F_{K}^n$.
It sends an $A$-torsor $X$ to the $A^{(p^n)}$-torsor 
$X^{(p^n)}=X\times_K (K,F_K^n)$. 
\end{itemize}

\begin{lem} \label{sha base change}
The above maps $H^1(F^n)$ and $(F_{K}^n)^*$ are equal. 
\end{lem}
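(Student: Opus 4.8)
The plan is to show that both maps agree by unwinding their definitions at the level of cocycles and using the basic fact that the relative Frobenius morphism $F^n\colon A\to A^{(p^n)}$ is, by its very construction, compatible with the Frobenius $F_K^n\colon \Spec K\to \Spec K$ on the base. More precisely, recall that $A^{(p^n)}$ is the fiber product $A\times_{\Spec K,F_K^n}\Spec K$, so that as an \'etale sheaf on $\Spec K$ it is precisely the pullback $(F_K^n)^*A$. Thus there is a canonical identification of the target $H^1(K,A^{(p^n)})$ with $H^1(K,(F_K^n)^*A)$, and under this identification the base change map $(F_K^n)^*$ is the tautological pullback map on cohomology induced by $F_K^n$, while the functoriality map $H^1(F^n)$ is induced by the morphism of sheaves $F^n\colon A\to (F_K^n)^*A$.

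Next I would identify this morphism of sheaves explicitly. The relative Frobenius $F^n_{A/K}$ is characterized by the universal property of the fiber product: it is the unique $K$-morphism $A\to A^{(p^n)}$ whose composition with the projection $A^{(p^n)}\to A$ is the absolute Frobenius $F_A^n$ and whose composition with the structure map $A^{(p^n)}\to \Spec K$ is the structure map of $A$. In sheaf-theoretic terms, the canonical adjunction map $\mathrm{id}\to (F_K^n)_*(F_K^n)^*$ (or rather the base change comparison) sends the sheaf $A$ to $(F_K^n)^*A=A^{(p^n)}$, and this is exactly the map $F^n$. Since pullback of torsors along $F_K^n$ is, by definition, the operation $X\mapsto X\times_{K,F_K^n}K=X^{(p^n)}$, and since $F^n_{X/K}\colon X\to X^{(p^n)}$ exhibits $X^{(p^n)}$ as the quotient $X/A[F^n]$ (because $F^n$ is faithfully flat with kernel $A[F^n]$, and $X$ is an $A$-torsor so the quotient $X/A[F^n]$ is an $A^{(p^n)}=A/A[F^n]$-torsor), both descriptions of the image torsor coincide.

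The cleanest way to write this is probably to work with a \v{C}ech cocycle: represent $X$ by a cocycle $(a_{ij})$ valued in $A$ with respect to an \'etale (or fppf) cover; then $(F_K^n)^*X$ is represented by the pulled-back cocycle, which is literally the same data viewed in $(F_K^n)^*A=A^{(p^n)}$, and $H^1(F^n)(X)$ is represented by $(F^n(a_{ij}))$ where $F^n\colon A\to A^{(p^n)}$ is applied coefficient-wise; one then checks these two cocycles represent the same class because the identification $A^{(p^n)}=(F_K^n)^*A$ carries one to the other. I expect the main obstacle to be purely bookkeeping: setting up the identification $A^{(p^n)}\simeq (F_K^n)^*A$ carefully enough that the two maps are manifestly the same, and being precise about the fact that $X^{(p^n)}$ (base change of the torsor) really is the fppf quotient $X/A[F^n]$ (functoriality), which uses that $F^n$ is faithfully flat and compatible with the torsor action. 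There is no deep input here; the statement is essentially a compatibility built into the definition of relative Frobenius, and the proof is a diagram chase once the right identifications are in place.
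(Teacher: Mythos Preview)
Your proposal is correct and follows essentially the same approach as the paper. Both arguments hinge on the observation that the relative Frobenius $F^n_{X/K}\colon X\to X^{(p^n)}$ realizes $X^{(p^n)}$ as the quotient $X/A[F^n]$, which identifies the base-change torsor with the functoriality torsor; the paper then confirms this via an explicit Galois cocycle computation (choosing $P\in X(K^{\mathrm{sep}})$ and comparing $\sigma\mapsto F^n(P^\sigma-P)$ with the cocycle for $X^{(p^n)}$), whereas you sketch the analogous verification with \v{C}ech cocycles, but this is a cosmetic difference.
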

\begin{proof}
We first present a geometric interpretation, which can be turned to a rigorous proof. Recall the relative Frobenius morphism $X\to X^{(p^n)}$. The action of $A$ on $X$ induces an action of $A[F^n]$ on $X$. The quotient map of the latter action is exactly $X\to X^{(p^n)}$.

We can also prove the result in terms of cocycles for Galois cohomology.
In fact, for any torsor $X\in H^1(K, A)$, take a point $P\in X(K^{\rm sep})$. 
By definition, $X$ is represented by the cocycle 
$$\Gal(K^{\rm sep}/K)\lra A(K^{\rm sep}), \quad
\sigma\longmapsto P^\sigma-P.$$ 
Then $F^n(P)$ is a point in $X^{(p^n)}$. This point gives a cocycle representing 
$X^{(p^n)}$ by
$$\sigma\longmapsto F^n(P)^\sigma-F^n(P)=F^n(P^\sigma-P)
\in A^{(p^n)}(K^{\rm sep}),$$ 
which is exactly the image of $H^1(F^n)$. 
This proves that the maps are equal. 
\end{proof}

\subsection{Purely inseparable points} \label{section inseparable}

In this subsection, we prove Theorem \ref{finiteness main} and Corollary \ref{finiteness cor}. For convenience, we duplicate 
Theorem \ref{finiteness main} in the following.

\begin{thm}[Theorem \ref{finiteness main}]
Let $S$ be a projective and smooth curve over a perfect field $k$ of characteristic $p>0$, and $K$ be the function field of $S$. Let $A$ be an abelian variety over $K$. 
Then the following are true:
\begin{itemize}
\item[(1)] If $S=\PP_k^1$, $A$ has everywhere good reduction over 
$S$, and the Hodge bundle of $A$ is nef over $S$,  
then $\Sha(A)[F^\infty]=0$.
\item[(2)] If $A$ has everywhere semiabelian reduction over 
$S$ and the Hodge bundle of $A$ is ample over $S$, then 
$\Sha(A)[F^\infty]=\Sha(A)[F^{n_0}]$ for some positive integer $n_0$. 
\end{itemize}
\end{thm}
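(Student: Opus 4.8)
The plan is to follow the strategy outlined in \S\ref{section idea}: translate membership in $\Sha(A)[F^\infty]$ into the existence of a purely inseparable closed point on the torsor, and then bound the inseparable degree of such a point by means of the positivity of the Hodge bundle. \emph{Step 1 (Reformulation).} Let $X\in\Sha(A)[F^\infty]$ be a nonzero class, say $X\in\Sha(A)[F^n]$. By Lemma~\ref{sha base change} this means the twisted torsor $X^{(p^n)}=X\times_{K,F_K^n}K$ is trivial, i.e. $X^{(p^n)}(K)\neq\emptyset$; unwinding the universal property of the fiber product and using that $K=k(S)$ is a one-variable function field over the perfect field $k$ (so $[K:K^{p^n}]=p^n$), this is equivalent to the existence of a closed point $P=\Spec k(P)$ of $X$ with $k(P)/K$ purely inseparable of degree dividing $p^n$; conversely such a point of degree $p^m$ puts $X$ in $\Sha(A)[F^m]$. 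Thus, writing $p^{m(X)}$ for the minimal inseparable degree of a closed point of $X$ (so $m(X)\geq1$ since $X\neq0$), it suffices to bound $m(X)$: in case (1) I will derive a contradiction, so $\Sha(A)[F^\infty]=0$, and in case (2) a bound $m(X)\leq n_0$ independent of $X$, so $\Sha(A)[F^\infty]=\Sha(A)[F^{n_0}]$.

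\emph{Step 2 (The differential map).} Fix $X$ and a closed point $P$ as above with $m=m(X)\geq1$, and let $\psi_K\colon P\to\Spec K$. The torsor isomorphism $X\times_KP\cong A\times_KP$ identifies $\psi_K^*\OO_A$ with $\Omega_{X/K}^1|_P$, a rank-$g$ bundle over $k(P)$ with $g=\dim A$; composing with the conormal surjection $\Omega_{X/K}^1|_P\twoheadrightarrow\Omega_{P/K}^1$ and with the canonical isomorphism $\Omega_{P/k}^1\xrightarrow{\ \sim\ }\Omega_{P/K}^1$ (the map $\Omega_{K/k}^1|_P\to\Omega_{P/k}^1$ in the cotangent sequence of $k\to K\to k(P)$ vanishes, because over a perfect $k$ one has $k(P)=K(t^{1/p^m})$ for a separating element $t\in K$ and $d(t)=d\big((t^{1/p^m})^{p^m}\big)=0$), I obtain a canonical surjection $\psi_K^*\OO_A\twoheadrightarrow\Omega_{P/k}^1$ onto a one-dimensional $k(P)$-space, hence a quotient line bundle $\CL_\eta$ of $\psi_K^*\OO_A$ at the generic point.

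\emph{Step 3 (Integral models and the inequality).} Let $\CP$ be the smooth projective model of $k(P)/k$ with induced morphism $\psi\colon\CP\to S$. Since $k$ is perfect, $\CP\cong S$ and $\psi$ is a finite purely inseparable morphism of degree $p^m$ (identified under this isomorphism with the $m$-fold absolute Frobenius of $S$); in particular $\CP$ has the same genus $g_S$ as $S$, $\deg\psi^*\mathcal D=p^m\deg\mathcal D$ for a divisor $\mathcal D$ on $S$, and residue fields are unchanged by $\psi$. Saturating the kernel, extend $\CL_\eta$ to a quotient line bundle $\psi^*\overline\Omega_{\CA/S}\twoheadrightarrow\CL$ on $\CP$. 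The crux is then to show that the generic inclusion $\CL_\eta=\Omega_{P/k}^1\hookrightarrow\Omega_{\CP/k}^1(E)_\eta$ extends to an injection $\CL\hookrightarrow\Omega_{\CP/k}^1(E)$ over all of $\CP$, where $E=(\psi^{-1}E_0)_\red$ and $E_0\subset S$ is the reduced locus of bad reduction of $A$; taking degrees on $\CP$ and using $\deg E=\deg E_0$ yields
$$
\deg_{\CP}\CL\ \leq\ \deg\Omega_{\CP/k}^1(E)\ =\ 2g_S-2+\deg E_0 .
$$
This extension step is the main obstacle. When $\dim A=1$ one extends the map using the minimal regular model of the genus-one curve $X$ over $S$; in higher dimension there is no such canonical regular model, and one must instead produce a projective model $\bar\CX\to S$ of $X$ with controlled relative differentials, together with an extension $\sigma\colon\CP\to\bar\CX$ of $P$ into the relatively smooth locus, via the compactified semi-abelian (toroidal) models of Mumford and Faltings--Chai (with K\"unnemann's refinement); one then identifies $\sigma^*\Omega_{\bar\CX/S}^1$ with $\psi^*\OO_A$ twisted along $E$ (using Lemma~\ref{hodge bundle} and the boundary behaviour) and pushes the conormal surjection through. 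Pinning down exactly the boundary divisor $E$ is the technical heart of the argument.

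\emph{Step 4 (Positivity and conclusion).} Since $\OO_A$ is nef (resp. ample) on $S$ and $\psi$ is finite, $\psi^*\overline\Omega_{\CA/S}$ is nef (resp. ample) on $\CP$ in Hartshorne's sense, and hence so is its quotient line bundle $\CL$. In case (1), $S=\PP_k^1$ and $A$ has everywhere good reduction, so $E_0=\emptyset$, $g_S=0$, and the displayed inequality reads $0\leq\deg_{\CP}\CL\leq-2$, a contradiction; thus no nonzero $X\in\Sha(A)[F^\infty]$ exists and $\Sha(A)[F^\infty]=0$. In case (2), ampleness of $\OO_A$ forces a lower bound $\deg_{\CP}\CL\geq c\,p^m$ with $c=c(\OO_A)>0$ independent of $X$ (on $\PP_k^1$ this is transparent, as $\psi^*\CO(d)=\CO(p^md)$ and all the summands of $\OO_A$ have degree $\geq1$; for general $S$ it follows from the ampleness hypothesis together with the structure of $\psi$), so $p^m\leq(2g_S-2+\deg E_0)/c$ and $m(X)\leq n_0$ for a fixed $n_0$, giving $\Sha(A)[F^\infty]=\Sha(A)[F^{n_0}]$. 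I expect the only real difficulty to be Step~3 for $\dim A>1$; the secondary new feature, that the Hodge bundle is a genuine vector bundle, is handled by passing through the chosen quotient line bundle $\CL$ and invoking ampleness/nefness of the whole bundle, precisely so that the positivity survives the purely inseparable pullback $\psi$.
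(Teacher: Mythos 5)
Your Steps 1, 2 and 4 match the paper's proof of this theorem essentially exactly: the reformulation via Lemma~\ref{sha base change}, the construction of the nonzero map $\tau\colon\psi^*\overline\Omega_{\CA/S}\to\Omega^1_{\CP/k}$ through the torsor trivialisation, the observation that purely inseparable $\psi$ kills the relative $K/k$-differentials, and the conclusion via slopes/degree under a degree-$p^m$ radicial pullback (the paper phrases this with $\mu_{\min}$ rather than your quotient line bundle $\CL$, but these give the same inequality). So the strategy is the same as the paper's.

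The genuine gap is Step 3, which is exactly the paper's Proposition~\ref{extension}, and the sketch you give there is both vague and in one respect misleading. The closure $\CR$ of $P$ in the Faltings--Chai compactified model $\CV$ in general does meet the boundary divisor $D$, so one cannot ``extend $P$ into the relatively smooth locus''; what makes the argument work is the log-differential identity
$\Omega^1_{\CV/S}(\log D/E_0)\cong\bar\pi^*\overline\Omega_{\CA}$
of \cite[VI, Thm.~1.1]{FC}, which after restriction to $\CR$ and pullback to the normalisation $\CP^c$ yields a map into $\Omega^1_{\CP^c/k}(\log E)$, i.e.\ with at most simple poles along $E$. Without this log-structure one only gets $\Omega^1_{\CP^c/k}(mE)$ for an uncontrolled $m$, which destroys the degree bound. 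Beyond that, to actually invoke Faltings--Chai the paper has to (a) localise at a completion of $S$ at a point of $E_0$, where $X\in\Sha(A)$ trivialises so one may replace $\CX$ by $\CA$; (b) add a principal polarisation via Zarhin's trick $(A\times A^t)^4$; and (c) add a level structure by a finite Galois base change $S'\to S$, which must be \emph{tamely ramified} (this uses Grothendieck's result that $I^{\rm w}$ acts trivially on $T_\ell(A)$ under semi-abelian reduction) so that $\Omega^1_{\CP^c/k}(E)$ pulls back to $\Omega^1_{\CP'^c/k}(E')$ and the constructed map descends by taking Galois invariants. None of (a)--(c) appears in your sketch, and each is needed; so while you have correctly identified where the difficulty lies, the proposal does not yet contain the argument that resolves it.
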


\subsubsection*{The corollary}

Now we deduce Corollary \ref{finiteness cor} from Theorem \ref{positivity main} and Theorem \ref{finiteness main}, which is duplicated below. 
 
\begin{cor} [Corollary \ref{finiteness cor}] 
Let $S$ be a projective and smooth curve over a finite field $k$, and $K$ be the function field of $S$. Let $A$ be an abelian variety over $K$. 
Then $\Sha(A)[F^\infty]$ is finite in each of the following cases:
\begin{itemize}
\item[(1)] $A$ is an elliptic curve over $K$;  
\item[(2)] $S=\PP_k^1$ and $A$ has everywhere semi-abelian reduction over $\PP^1_k$;
\item[(3)] $A$ is an ordinary abelian variety over $K$, and there is a place of $K$ at which $A$ has good reduction with $p$-rank $0$. 
\end{itemize}
\end{cor}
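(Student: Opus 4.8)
The three cases are essentially reductions to Theorem~\ref{finiteness main}, so the plan is to treat each case by first arranging the hypotheses of that theorem (after allowing a finite base change of $k$, which is harmless for finiteness statements since it multiplies $\Sha(A)[F^\infty]$ by only a finite factor, or even leaves it injecting into the base-changed one), and then combining the two conclusions of Theorem~\ref{finiteness main} with Theorem~\ref{positivity main}.

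\textbf{Case (1): $A$ an elliptic curve.} First I would apply the semistable reduction theorem to reduce, after a finite separable base change $K'/K$ (replacing $S$ by its normalization $S'$ in $K'$), to the case where $A$ has everywhere semi-abelian reduction. For an elliptic curve, the N\'eron model $\CA$ over $S'$ then has $\OO_\CA = \Lie(\CA/S')^\vee$ a \emph{line} bundle, and by Theorem~\ref{northcott} its degree $h(A)$ is $\ge 0$, with equality iff $A$ is isotrivial. If $A$ is isotrivial, then after a further finite base change $A$ becomes constant, $A=C_{K''}$ with $C/k''$ an elliptic curve; here $F^n\colon C\to C^{(p^n)}$ is (the base change of) a $k$-morphism, and one checks directly that $\Sha(C_{K''})[F^\infty]$ is finite (indeed it injects into a fixed $\Sha(C_{K''})[p^{n}]$, or one argues that $\Sha(C_{K''})$ itself is finite by Artin--Tate since the relevant surface is $C\times_k S''$). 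If $A$ is not isotrivial, then $h(A)>0$, so $\OO_\CA$ is an ample line bundle on $S'$, and Theorem~\ref{finiteness main}(2) gives $\Sha(A_{K'})[F^\infty]=\Sha(A_{K'})[F^{n_0}]$, which is finite because $\Sha(A_{K'})[F^{n_0}]\subset \Sha(A_{K'})[p^{n_0}]$ and the latter is finite. Transferring back to $K$ along the finite base change preserves finiteness. The only subtlety here is the compatibility of $\Sha$ with base change, but since $K'/K$ is a finite extension of global function fields, $\Sha(A)[F^\infty]\to \Sha(A_{K'})[F^\infty]$ has kernel contained in $H^1(K'/K, A(K'))$, a finite group, so finiteness descends.

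\textbf{Case (2): $S=\PP^1_k$, semi-abelian reduction.} Here I would apply Theorem~\ref{positivity main} to $A$: there is an isogeny $A\to B\times_K C_K$ with $C/k$ an abelian variety and $\OO_B$ ample over $\PP^1_k$. Since an isogeny induces a map $\Sha(A)\to\Sha(B\times_K C_K)=\Sha(B)\times\Sha(C_K)$ with finite kernel and cokernel — and it commutes with the relative Frobenius up to the analogous isogeny on the $(p^n)$-twists — it suffices to bound $\Sha(B)[F^\infty]$ and $\Sha(C_K)[F^\infty]$ separately. For $B$: $B$ still has everywhere semi-abelian reduction over $\PP^1_k$ (semi-abelian reduction is preserved under isogeny) and $\OO_B$ is ample, so Theorem~\ref{finiteness main}(2) applies and $\Sha(B)[F^\infty]=\Sha(B)[F^{n_0}]\subset\Sha(B)[p^{n_0}]$ is finite. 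For $C_K$ (constant): the associated surface $C\times_k\PP^1_k$ satisfies the Tate conjecture by Tate's theorem on products, so $\Sha(C_K)$ is finite outright, hence so is $\Sha(C_K)[F^\infty]$. Again descent of finiteness under the isogeny uses only that isogeny kernels are finite.

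\textbf{Case (3): ordinary $A$ with a place of good reduction and $p$-rank $0$.} By R\"ossler \cite[Thm.~1.2]{Ros1}, after a finite separable base change making $A$ semi-abelian, the Hodge bundle $\OO_A$ becomes ample over the corresponding curve $S'$ (the good-reduction place with $p$-rank $0$ persists after base change). Then Theorem~\ref{finiteness main}(2) gives $\Sha(A_{K'})[F^\infty]=\Sha(A_{K'})[F^{n_0}]$, which is finite as a subgroup of $\Sha(A_{K'})[p^{n_0}]$; finiteness descends to $K$ as above.

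\textbf{Main obstacle.} The genuinely delicate points are bookkeeping ones rather than deep: (i) verifying that $\Sha(\cdot)[F^\infty]$ behaves well under finite base change of the base field and under isogenies — specifically that finiteness is preserved in both directions, which rests on the finiteness of isogeny kernels, of $H^1$ of a finite group with values in a finitely generated (Lang--N\'eron) group, and on the fact that an isogeny $f\colon A\to A'$ intertwines $F^n$ with the induced isogeny $f^{(p^n)}$ up to the $(p^n)$-twist of $f$; and (ii) checking that the auxiliary hypotheses (semi-abelian reduction, ampleness/nefness of the Hodge bundle, existence of a $p$-rank-$0$ good-reduction place) really do survive the finite base changes one is forced to make. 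None of these requires new ideas, but they must be assembled carefully, and the handling of the constant factor $C_K$ — where one invokes the Tate conjecture for $C\times_k S$ via Tate's theorem on products of curves and abelian varieties, or the BSD equivalence of Theorem~\ref{Brauer Sha} — is the step where one leaves the framework of Theorem~\ref{finiteness main} entirely and should be spelled out explicitly. A detailed proof is given below.
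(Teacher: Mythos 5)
Your proposal follows the same three-case reduction to Theorem~\ref{finiteness main} that the paper uses, and the overall shape of the argument is right. However there is one recurring genuine gap: in each case you finish by observing $\Sha(\,\cdot\,)[F^{n_0}]\subset\Sha(\,\cdot\,)[p^{n_0}]$ and asserting ``the latter is finite,'' but this is a real theorem and not a bookkeeping point --- $A[p^n]$ is not \'etale over a global function field of characteristic $p$, so the usual argument for finiteness of the $\ell$-primary torsion of $\Sha$ does not apply. The paper cites Milne [Mil2] (\emph{Elements of order $p$ in the Tate--\v{S}afarevi\v{c} group}) at exactly this step, and you need that reference (or an equivalent input) to close the argument.

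A second, softer issue is in case (2): you claim the map $\Sha(A)\to\Sha(B\times_K C_K)$ induced by the isogeny $f$ has ``finite kernel and cokernel'' and that this ``uses only that isogeny kernels are finite.'' That is too glib: the kernel of $\Sha(f)$ sits in a subquotient of $H^1\bigl(K,\ker f(K^s)\bigr)$, which can be infinite. What one can easily show --- and what the paper shows --- is that this kernel is \emph{annihilated} by $\deg(f)$ (either via Galois cohomology of $0\to\ker f(K^s)\to A(K^s)\to A'(K^s)\to 0$, or by composing with the dual isogeny), so that $\Sha(A)[F^\infty]$ has exponent dividing $p^{n_0}\deg(f)$; finiteness then again comes from Milne. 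Finally, for the constant factor $C_K$ you invoke Tate's theorem for products plus Artin--Tate to get finiteness of all of $\Sha(C_K)$. This works, but is heavier than needed: the paper simply notes that the Hodge bundle of $C_K$ over $\PP^1_k$ is trivial, hence nef, and applies Theorem~\ref{finiteness main}(1) to get $\Sha(C_K)[F^\infty]=0$ outright.
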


We first prove (1). Let $K'$ be a finite Galois extension of $K$.
By the inflation-restriction exact sequence, we see that the kernel of 
$\Sha(A)\to \Sha(A_{K'})$ is annihilated by $[K':K]$. 
This kernel is actually finite by Milne \cite{Mil2}. 
Consequently, we can replace $K$ by any finite Galois extension, and we can particularly assume that $A$ has everywhere semi-abelian reduction over $S$. Note that $\OO_A$ is a line bundle over $S$.
The height $h(A)=\deg(\OO_A)\geq 0$, where the equality holds only if $A$ is isotrivial. 
This is a classical fact for elliptic curves, but we also refer to \cite[\S V.2, Prop. 2.2]{FC} (and Theorem \ref{northcott} below) for the case of abelian varieties. 
If $h(A)\geq 0$, we can apply Theorem \ref{finiteness main} to finish the proof. 
If $h(A)= 0$, then $A$ is isotrivial, and we can assume that $A$ is constant by a finite extension. Then the whole $\Sha(A)$ is finite by 
Milne \cite{Mil1}.

For (3), by the above argument, we can assume that $A$ has everywhere semi-abelian reduction over $S$. Then the Hodge bundle $\overline\Omega_{A}$ is ample by
R\"ossler \cite[Thm. 1.2]{Ros1}.

Now we prove (2). Let $A$ be as in Corollary \ref{finiteness cor}. The goal is to prove that 
$\Sha(A)[F^\infty]$ is finite. 
By Theorem \ref{positivity main}, there is an isogeny 
$f:A\to A'$ with $A'=B\times_{K} C_{K}$, where $C$ is an abelian variety over $k$, and $B$ is an abelian variety over $K$ with an ample Hodge bundle over $\PP_{k}^1$.

By Theorem \ref{finiteness main},  $\Sha(C_K)[F^\infty]=0$ and 
$\Sha(B)[F^\infty]$ has a finite exponent. Then $\Sha(A')[F^\infty]$ is annihilated by $p^{n_0}$ for some $n_0$. 
Taking Galois cohomology on the exact sequence 
$$
0\lra \ker(f)[K^s] \lra A(K^s) \lra A'(K^s) \lra 0,
$$
we see that the kernel of $\Sha(A)[F^\infty]\to \Sha(A')[F^\infty]$ 
is annihilated by $\deg(f)$.
Thus $\Sha(A)[F^\infty]$ is annihilated by $p^{n_0}\deg(f)$. 
It is finite by Milne \cite{Mil2} again. 
This proves Corollary \ref{finiteness cor}.

\subsubsection*{Map of differentials}

In the following, we prove Theorem \ref{finiteness main}. 
Our proof is inspired by an idea of R\"ossler \cite{Ros1}, which in turn comes from an idea of Kim \cite{Kim}. 
We refer back to \S \ref{section idea} for a quick idea of our proof.

We first introduce some common notations for part (1) and (2). 
Fix an element $X\in \Sha(A)[F^\infty]$. Then $X\in \Sha(A)[F^n]$ for some $n\geq 1$. We need to bound $n$ to some extent. 
Denote $K_n=K^{\frac{1}{p^n}}$, viewed as an extension of $K$.
By Lemma \ref{sha base change}, the base change $X_{K_n}$ is a trivial  $A_{K_n}$-torsor. 
Therefore, the set $X(K_n)$ is non-empty.

Take a point of $X(K_n)$, which gives a closed point $P$ of $X$.
Denote by $\CX$ the N\'eron model of $X$ over $S$. 
Let $\CP_0$ be the Zariski closure of $P$ in $\CX$. Let $\CP$ be the normalization of $\CP_0$. 
By definition, $\CP$ and $\CP_0$ are integral curves over $k$, endowed with quasi-finite morphisms to $S$.

If $X$ is non-trivial in $\Sha(A)$, then $P$ is not a rational point over $K$. 
It follows that the morphism $\psi:\CP\to S$ is a non-trivial purely inseparable quasi-finite morphism over $k$.
We are going to bound the degree of this morphism. 

Start with the canonical surjection
$$
\tau_0: (\Omega_{\CX/S}^1) |_{\CP_0} \lra \Omega_{\CP_0/S}^1.
$$
As $\CP_0$ is purely inseparable over $S$,  we have a canonical isomorphism 
$$
\Omega_{\CP_0/k}^1\lra \Omega_{\CP_0/S}^1.
$$ 
Then we rewrite $\tau_0$ as 
$$
\tau_0: (\Omega_{\CX/S}^1) |_{\CP_0} \lra \Omega_{\CP_0/k}^1.
$$
By pull-back to the normalization $\CP\to \CP_0$, we obtain a nonzero morphism
$$
\tau: (\Omega_{\CX/S}^1) |_{\CP} \lra \Omega_{\CP/k}^1.
$$
Here the restrictions to $\CP$ really mean pull-backs, since $\CP\to \CX$ may not be an immersion but a quasi-finite morphism. 

Denote by $\psi:\CP\to S$ the natural morphism. 
By Lemma \ref{hodge bundle}, we have a canonical isomorphism
$$
(\Omega_{\CX/S}^1) |_{\CP} \lra \psi^*\overline\Omega_{\CA/S}.
$$
Therefore, the nonzero map $\tau$ becomes
$$
\tau: \psi^*\overline\Omega_{\CA/S} \lra \Omega_{\CP/k}^1.
$$
It is a morphism of vector bundles on $\CP$.

\subsubsection*{Proof of part (1)}

With the above map $\tau$, it is very easy to prove part (1).  

In fact, by the assumption in (1), $\CA$ is an abelian scheme over $S$, so $\CX$ is proper and smooth over $S$.
Then $\CP_0$ and $\CP$ are proper curves over $k$.
In particular, $\CP$ is a proper and regular curve over (the perfect field) $k$ with a finite, flat and radicial morphism $\psi$ to $S=\PP_k^1$. Thus $\CP$ is isomorphic to $\PP_k^1$, under which $\psi$ becomes a relative Frobenius morphism.

The nonzero map $\tau$ gives
$$
\mu_{\min}\left(\psi^*\overline\Omega_{\CA/S}\right) \leq 
\deg\left(\Omega_{\CP/k}^1\right)=-2.
$$
By assumption, $\overline\Omega_{\CA/S}$ is nef, so the left-hand side is non-negative. This is a contradiction, which is originally caused by the assumption that $X$ is non-trivial.  
Part (1) is proved.

\subsubsection*{Proof of part (2)}

For part (2) of Theorem \ref{finiteness main}, we do not have the assumption that $\CA\to S$ is proper,  and thus we lose the properness of $\CP_0$ and its normalization $\CP$. 
To resolve the problem, we use a result of R\"ossler \cite{Ros1} to ``compactify'' $\tau$, which is in turn a consequence of the degeneration theory of Faltings--Chai \cite{FC}.

Resume the above notations. We still have a nonzero map 
$$
\tau: \psi^*\overline\Omega_{\CA/S} \lra \Omega_{\CP/k}^1.
$$
Here $\CP$ is still a smooth curve over $k$. 
Denote by $\CP^c$ the unique smooth compactification of $\CP$ over $k$. 
We obtain a finite, flat and radicial morphism $\psi^c:  \CP^c\to S$.  
This is still a relative Frobenius morphism. 

Denote by $E_0$ the reduced closed subscheme of $S$ consisting of $v\in S$ such that $\CA$ is not proper above $v$. Denote by $E$ the reduced structure of the preimage of $E_0$ under the map $\CP^c\to S$.  
We have the following extension. 

\begin{pro} \label{extension}
The map $\tau:\psi^*\overline\Omega_{\CA/S} \lra \Omega_{\CP/k}^1$ extends uniquely to a nonzero map
$$
\tau^c: (\psi^c)^*\overline\Omega_{\CA/S} \lra \Omega_{\CP^c/k}^1(E).
$$
\end{pro}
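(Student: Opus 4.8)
The plan is to reduce the statement to the corresponding statement for the N\'eron model $\CX$ of the torsor $X$, and then to invoke R\"ossler's compactification result (itself built on the degeneration theory of Faltings--Chai \cite{FC}) to control the poles of $\tau^c$ along the boundary divisor $E$. First I would recall that over $S\setminus E_0$ the group scheme $\CA$ is an abelian scheme, hence $\CX$ is proper and smooth there, so $\CP$ and $\CP_0$ restricted over $S\setminus E_0$ are already the good locus; in particular the map $\tau$ is defined on all of $\CP$ and the only issue is the behaviour of $\tau$ near the finitely many points of $\CP$ lying over $E_0$, i.e.\ near the finitely many points of $\CP^c\setminus\CP$ together with the points of $E$. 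Since uniqueness of the extension is automatic (two extensions of a nonzero map of line bundles on the smooth curve $\CP^c$ agree on the dense open $\CP$, hence everywhere), the whole content is existence, and for existence it suffices to work locally at each point $w$ of $\CP^c$ lying over a point $v\in E_0$.

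Next I would set up the local model. Fix such a point $w\in\CP^c$ over $v\in E_0$, and let $R=\CO_{S,v}$ with fraction field $K_v$ and a chosen uniformizer; the image $\CO_{\CP^c,w}$ is a discrete valuation ring finite and purely inseparable over $R$. The point $P\in X(K_n)$ determines a $K_v$-point $P_v$ of $X_{K_v}$ which, via the valuative criterion and the N\'eron mapping property, extends to an $R$-point of $\CX_R$; but we want the section over the larger ring $\CO_{\CP^c,w}$, which we get by base change. The key input is R\"ossler's compactification of $\CX$ over $S$: using the toroidal compactification $\overline{\CA}$ of the semiabelian scheme $\CA$ furnished by Faltings--Chai (or K\"unnemann's refinement), one obtains a proper model $\overline{\CX}$ of $X$ over $S$ containing $\CX$ as an open subscheme, such that $\Omega^1_{\overline{\CX}/S}$ agrees with $\Omega^1_{\CX/S}$ off the boundary and the identity section's pullback is still $\overline\Omega_{\CA/S}$, with the boundary contributing at worst a log pole. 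Concretely, after pulling back to $\CP^c$, the section $P$ extends to a section of $\overline{\CX}$ over $\CO_{\CP^c,w}$, and composing the canonical surjection $(\Omega^1_{\overline{\CX}/S}(\log))|_{\CP^c}\to\Omega^1_{\CP^c/k}(\text{boundary})$ with Lemma \ref{hodge bundle} (in its compactified form) produces the desired map $(\psi^c)^*\overline\Omega_{\CA/S}\to\Omega^1_{\CP^c/k}(E)$, whose restriction to $\CP$ is $\tau$ by construction. That the extension is nonzero is clear since it restricts to the nonzero map $\tau$.

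The main obstacle I anticipate is verifying precisely that the boundary contribution is bounded by the \emph{reduced} divisor $E$ rather than some larger multiple: this requires knowing that R\"ossler's extension of $\tau$ has at worst a simple (logarithmic) pole along each component of $E$, which is where the fine structure of the Faltings--Chai degeneration — the fact that $\overline\Omega_{\CA/S}$ is canonically $e^*\Omega^1_{\overline{\CA}/S}$ with the toroidal boundary giving log-differentials, together with the purely inseparable (hence tamely ramified in the relevant sense, being a Frobenius) nature of $\psi^c$ — is essential. A secondary point to be careful about is that $\CP_0$ need not be normal and $\CP\to\CX$ need not be an immersion, so all the "restrictions to $\CP$" must be read as pullbacks; but since pullback is exact on the relevant locally free sheaves and commutes with the formation of $\Omega^1$ up to the canonical surjection already used in defining $\tau$, this causes no real difficulty. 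I would cite R\"ossler \cite{Ros1} for the compactified map and quote the needed compatibility from \cite{FC} and \cite{Kun}, rather than redoing the degeneration theory.
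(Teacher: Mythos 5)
Your overall strategy—reduce to a local analysis near $E_0$ and invoke the Faltings--Chai compactification via R\"ossler to bound the poles along $E$—matches the paper's approach, but there are two substantive gaps.

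First, the reduction to the trivial torsor is never actually made. You write that "the point $P\in X(K_n)$ determines a $K_v$-point $P_v$ of $X_{K_v}$," which is not correct: $P$ lives over the purely inseparable extension $K_n=K^{1/p^n}$, not over $K$, so it gives a point over a purely inseparable extension of $K_v$, not a $K_v$-point. More importantly, what one really needs is the observation that since $X\in\Sha(A)$, the torsor $X_{K_v}$ is \emph{trivial} for every $v$, so after completing at a point of $E_0$ the N\'eron model $\CX$ simply \emph{becomes} $\CA$. That is exactly how the paper reduces from the torsor setting to R\"ossler's situation; without this step you cannot legitimately speak of a "compactification $\overline{\CX}$" obtained from the toroidal compactification of $\CA$, because Faltings--Chai and K\"unnemann compactify (semi-)abelian schemes, not torsors. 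Your sentence asserting the existence of $\overline{\CX}$ from $\overline{\CA}$ is exactly where the triviality of the torsor is silently used and must be justified.

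Second, and more seriously, you never address the hypotheses needed to apply the Faltings--Chai compactification at all. The compactification in \cite[Chap. VI, Thm. 1.1]{FC}, and R\"ossler's \cite[Lem. 2.1]{Ros1} which rests on it, require the abelian variety to be principally polarized and to carry a full level-$N$ structure with $N>2$ prime to $p$. The bulk of the paper's proof of Proposition \ref{extension} after the local reduction is precisely the reduction to this case: Zarhin's trick replaces $A$ by $(A\times A^\vee)^4$ to produce a principal polarization (compatibly with the extension problem, since $\overline\Omega_{A^*}$ contains $\overline\Omega_A$ as a direct summand and the bad loci coincide), and then a tame descent argument—using Grothendieck's result \cite[Exp. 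IX, Prop. 3.5.2]{SGA7} that the $\ell$-torsion field is tamely ramified when $\ell\neq p$ and $A$ has semi-abelian reduction—produces the level structure while preserving the reduced divisor $E$ under pullback because the ramification is tame. Citing R\"ossler ``for the compactified map'' without performing this reduction leaves the argument incomplete; his lemma does not apply to an arbitrary $A$ as given.

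Your instinct that the log pole bound is the crux is correct, and your remark about reading restrictions as pullbacks is fine, but as written the proposal replaces the two hardest steps of the proof with assertions.
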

The uniqueness of the extension is trivial. 
It is easy to see how the proposition finishes proving part (2) of Theorem \ref{finiteness main}. 
In fact, the existence of the map $\tau^c$ gives
$$
\mu_{\min}\left((\psi^c)^*\overline\Omega_{\CA/S}\right) \leq 
\deg\left(\Omega_{\CP^c/k}^1(E)\right).
$$
This is just 
$$
\deg(\psi^c)\cdot \mu_{\min}(\overline\Omega_{\CA/S})  
\leq 2g-2+\deg(E_0)-2.
$$
Here $g$ is the genus of $S$.
Note that $\deg(\psi^c)=[K(P):K]$.
It follows that $K(P)$ is contained in $K_{n_0}$, where $n_0$ is the largest integer satisfying 
$$
p^{n_0}\cdot \mu_{\min}(\overline\Omega_{\CA/S})  
\leq 2g-2+\deg(E_0)-2.
$$
This gives (2) of the theorem.

\subsubsection*{Proof of the extension}

Now we prove Proposition \ref{extension}. 
Note that the map can be extended as
$$
(\psi^c)^*\overline\Omega_{\CA/S} \lra \Omega_{\CP^c/k}^1(mE)
$$
for sufficiently large integers $m$.
The multiplicity $m$ represents the order of poles allowed along $E$, and 
the case $m=1$ is exactly the case of log-differentials.
Our proof takes a lot of steps of reductions.

To control the poles, it suffices to verify the result locally; i.e., we can replace $S$ by its completion at a point in $E$, and replace everything else in the maps by its corresponding base change. To avoid overwhelming notations, we will still use the original notations, but note that we will be in the local situation. As a consequence (of assuming this local situation), $X$ is a trivial torsor, so we assume that $X=A$ and $\CX=\CA$. 
Since we have the trivial torsor, our situation is very similar to the situation of R\"ossler \cite{Ros1}.

\begin{lem}[Rossler]\label{rossler}
Assume further that $A$ has a principal polarization and $A(\overline K)[n]\subset A(K)$ for some $n>2$ coprime to $p$. 
Then the map $\gamma$ extends to a map
$$
\gamma^c: (\psi^c)^*\overline\Omega_{\CA} \lra \Omega_{\CP^c/k}^1(E).
$$
\end{lem}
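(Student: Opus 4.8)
The plan is to realise the Hodge bundle $\OO_\CA$ through the Faltings--Chai degeneration theory, refined to a regular projective model by K\"unnemann, and then to produce $\gamma^c$ as a pullback of relative logarithmic differentials along the closure of the point $P$. Throughout, $\gamma$ is the natural map $\psi^*\OO_\CA\to\Omega^1_{\CP/k}$ considered above (the restriction of $\tau$ to the present situation $X=A$, $\CX=\CA$). The hypotheses enter only in the first step: since $A$ is principally polarized and $A(\overline K)[n]\subset A(K)$ for some $n>2$ coprime to $p$, the group $A[n]$ is \'etale and $A$ defines a point of the moduli problem of principally polarized abelian schemes with symplectic level-$n$ structure, which is a fine moduli scheme admitting a toroidal compactification (Faltings--Chai \cite{FC}). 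Because $A$ has semi-abelian reduction, the classifying morphism extends across $E_0$ into this compactification; pulling back the universal semi-abelian scheme together with its Faltings--Chai compactification, and choosing a cone decomposition with K\"unnemann \cite{Kun} so that the total space is regular and projective, one obtains a projective regular $S$-scheme $\pi:\abb\to S$ which contains the identity component $\CA^0$ of the N\'eron model of $A$ as a dense open subscheme, whose complement $D=\abb\smallsetminus\CA^0$ is a relative normal crossings divisor supported over $E_0$, and for which there is a canonical isomorphism
$$\Omega^1_{\abb/S}(\log D)\ \simeq\ \pi^*\OO_\CA .$$
Over $\CA^0$ this is Lemma \ref{invariant differential} applied to the smooth group scheme $\CA^0$; along $D$ it is the toric invariance of logarithmic differentials built into the Faltings--Chai construction. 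This is the one genuinely deep input, and it is exactly the ingredient that R\"ossler \cite{Ros1} extracts from \cite{FC}.

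Next, let $\widetilde\CP\subset\abb$ be the Zariski closure of $P$; it is an integral closed $S$-curve, finite over $S$, not contained in $D$. Its normalization is the smooth proper $k$-curve $\CP^c$ with function field $K(P)$, and the induced morphism $\iota:\CP^c\to\abb$ lies over the purely inseparable morphism $\psi^c:\CP^c\to S$. Since $\abb$ is regular, $D$ is Cartier, so $\iota^*D$ is an effective Cartier divisor on $\CP^c$ whose support is contained in $\bigl((\psi^c)^{-1}(E_0)\bigr)_{\red}=E$; hence $\iota$ is a morphism of log schemes $(\CP^c,\iota^*D)\to(\abb,D)$ over $S$, and pulling back relative logarithmic differentials gives a canonical $\CO_{\CP^c}$-linear map $\iota^*\bigl(\Omega^1_{\abb/S}(\log D)\bigr)\to\Omega^1_{\CP^c/S}(\log\iota^*D)$. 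By the displayed isomorphism the source is $\iota^*\pi^*\OO_\CA=(\psi^c)^*\OO_\CA$. For the target, since $K(P)/K$ is purely inseparable of $p$-power degree the morphism $\psi^c$ is a power of the Frobenius, so $(\psi^c)^*\Omega^1_{S/k}\to\Omega^1_{\CP^c/k}$ is the zero map and therefore $\Omega^1_{\CP^c/S}\simeq\Omega^1_{\CP^c/k}$; as $\iota^*D$ is supported on $E$ and $\CP^c$ is a smooth curve, $\Omega^1_{\CP^c/S}(\log\iota^*D)\simeq\Omega^1_{\CP^c/k}(\log\iota^*D)\hookrightarrow\Omega^1_{\CP^c/k}(E)$. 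Composing these identifications with the map above yields
$$\gamma^c:\ (\psi^c)^*\OO_\CA\ \lra\ \Omega^1_{\CP^c/k}(E) .$$
Over the good reduction locus $D$ is empty and $\abb=\CA^0$, so on $\CP=\CP^c\smallsetminus E$ the map in question is the natural map $\psi^*\OO_\CA=\Omega^1_{\CA^0/S}|_\CP\to\Omega^1_{\CP/S}=\Omega^1_{\CP/k}$, which is $\gamma$. Hence $\gamma^c$ extends $\gamma$ and in particular is nonzero.

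The routine parts are the construction of $\CP^c$ and $\iota$, the computation $\Omega^1_{\CP^c/S}\simeq\Omega^1_{\CP^c/k}$ forced by inseparability, the functoriality of log differentials, and the compatibility check on $\CP$. The real work is the first step: producing the regular projective compactification $\abb$ together with the invariance isomorphism $\Omega^1_{\abb/S}(\log D)\simeq\pi^*\OO_\CA$. This is exactly what forces the hypotheses of the lemma — the principal polarization and the level structure embed $A$ into a moduli problem with a known toroidal compactification, and $n>2$ prime to $p$ both rigidifies the moduli functor and makes the level structure, hence the classifying map, spread out across $E_0$ — and it rests on Faltings--Chai \cite{FC}, K\"unnemann \cite{Kun} and R\"ossler \cite{Ros1}. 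One harmless subtlety, accommodated by allowing a simple pole along all of $E$ rather than only along $\iota^*D$: if $\widetilde\CP$ meets $\CA^0$ inside a bad fibre (so that $P$ has good reduction there on the identity component), then $\gamma^c$ is already regular at that point and nothing is lost.
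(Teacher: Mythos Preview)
Your proposal is correct and follows essentially the same route as the paper: invoke the Faltings--Chai toroidal compactification of the universal principally polarized abelian variety with level-$n$ structure (this is precisely where the hypotheses are used), pull it back along the classifying map $S\to\bar A_{g,n}$ to obtain a regular projective model $\abb$ of $A$ equipped with the key isomorphism between the sheaf of relative logarithmic differentials and $\pi^*\OO_\CA$, then take the closure of $P$, normalize to get $\CP^c$, and pull back log differentials using that the inseparability of $\psi^c$ kills $(\psi^c)^*\Omega^1_{S/k}$.

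One small point of notation: the paper writes the relevant sheaf as $\Omega^1_{\CV/S}(\log D/E_0):=\Omega^1_{\CV/k}(\log D)/\bar\pi^*\Omega^1_{S/k}(\log E_0)$, i.e.\ with a log structure on the base $S$ at $E_0$ as well, whereas you write $\Omega^1_{\abb/S}(\log D)$ with $D=\abb\setminus\CA^0$ and no log structure downstairs. The Faltings--Chai isomorphism is stated for the former; your formulation is fine once one interprets it as a log-smooth morphism of log schemes $(\abb,D)\to(S,E_0)$ rather than over $S$ with trivial log structure. This does not affect the argument since on $\CP^c$ the pullback of $\Omega^1_{S/k}(\log E_0)$ dies anyway by inseparability, but it is worth being precise about.
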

\begin{proof}
This is essentially \cite[Lem. 2.1]{Ros1}, except that we are in the local case, but it does not make any essential difference in the proof. 
The extension is obtained by applying the compactification result of \cite{FC}.
For convenience of readers, we sketch the proof here. 

Denote $U=S-E_0$. Then $\CA$ is proper over $U$. 
Since $S$ is the spectrum of a complete discrete valuation ring by our assumption, the essential case is that $E_0$ is the closed point of $S$ and $U$ is the generic point of $S$. 
The key is that the abelian scheme $\pi_U:\CA_U\to U$ has a compactification over $S$, which consists of a regular integral scheme $\CV$ containing $\CA_U$ as an open subscheme and a proper morphism $\bar\pi:\CV\to S$ extending 
$\pi_U:\CA_U\to U$. 
The complement $D=\CV-\CA_U$ is a divisor with normal crossings with respect to $k$. 
Moreover, the log-differential sheaf 
$$\Omega_{\CV/S}^1(\log D/E_0):=\Omega_{\CV/k}^1(\log D)/\bar\pi^*
\Omega_{S/k}^1(\log E_0)$$
is locally free on $\CV$ and satisfies
$$\Omega_{\CV/S}^1(\log D/E_0)=\bar\pi^* \overline\Omega_{\CA}, \qquad
\bar\pi_*\Omega_{\CV/S}^1(\log D/E_0)=\overline\Omega_{\CA}. 
$$
This result is a consequence of \cite[Chap. VI, Thm. 1.1]{FC}, which actually constructs a compactification $\bar A_{g,N}$ of the moduli space $A_{g,N}$ of principally polarized abelian varieties of dimensions $g$ with full $N$-level structures and its universal abelian variety. 
The pull-back of the compactification of the universal abelian variety via the map $S\to \bar A_{g,N}$ (representing the family $\CA\to S$) gives the compactification $\CV$ in our notation. 

With the compactification, take $\CR$ to be the closure of $P$ in $\CV$. 
There is a natural finite map $\delta:\CP^c\to \CR$, which is just the normalization of $\CR$. 
Then we have well-defined maps 
$$
\delta^*(\Omega_{\CV/S}^1|_\CR) \lra  \delta^*\Omega_{\CR/k}^1 \lra \Omega_{\CP^c/k}^1. 
$$
Note that the pull-back of a log-differential is still a log-differential.  
The log-version of the above composition give a map
$$
\delta^*(\Omega_{\CV/S}^1(\log D/E_0)|_\CR) \lra \Omega_{\CP^c/k}^1(\log E). 
$$
By the above property of $\Omega_{\CV/S}^1(\log D/E_0)$, it becomes
$$
(\psi^c)^*\overline\Omega_{\CA} \lra \Omega_{\CP^c/k}^1(\log E). 
$$
This is exactly the extension we want. 
\end{proof}

\subsubsection*{Polarization and level structure}

Go back to the proof of Proposition \ref{extension}. It remains to add a polarization and a level structure to $A$. 

We first take care of the polarization. 
By Zarhin's trick, $A^*=(A\times A^t)^4$ has a principal polarization (cf. \cite{Zar} or \cite[IX, Lem. 1.1]{MB2}). 
Write $A^*=A\times A^3\times (A^t)^4$. 
Extend the closed point $P\in A$ to be the point $P^*=(P, 0^3, 0^4)$ in 
$A^*$. Note that 
$\overline \Omega_{A^*}=\overline \Omega_{A}\oplus (\overline \Omega_{A})^3\oplus (\overline \Omega_{A^t})^4$,
and that $A^*$ has the same set of places of bad reduction as $A$. 
The solution of the analogous problem for the version $(A^*, P^*)$ implies that of $(A, P)$. 
Hence, we can assume that $A$ is principally polarized.

In order to get a level structure, we need a descent argument.  
Let $S'\to S$ be a finite, flat and tamely ramified Galois morphism. 
Take this morphism to do a base change, and denote by $(S', \CP', \psi')$ the base changes of $(S, \CP, \psi)$.
Denote by $E'$ the reduced structure of the preimage of $E$ in $\CP'^c$, which is just a point in the local setting. 
Suppose that we have a well-defined extension over $S'$ of the corresponding map $\gamma'$, which should take the form
$$
\gamma'^c: (\psi'^c)^*\overline\Omega_{\CA} \lra \Omega_{\CP'^c/k}^1(E').
$$
Note that the pull-back of $\Omega_{\CP^c/k}^1(E)$ to
$\CP'^c$ is exactly $\Omega_{\CP'^c/k}^1(E')$ by considering the ramification index. 
Taking the Galois invariants on both sides of $\gamma'^c$, we get exactly the desired map $\gamma^c$ on $\CP^c$. 

Finally, we can put a level structure on $A$. 
Take a prime $\ell\nmid (2p)$. 
Let $K'=K(A[\ell])$ be the field of definition of all $\ell$-torsions of $A$. Let $S'$ be the integral closure of $S$ in $K'$. 
We are going to take the base change $S'\to S$.
The only thing left to check is that $K'$ is tamely ramified over $K$.
This is a well-known result proved by Grothendieck under the conditions that 
$p\neq \ell$ and $A$ has semi-abelian reduction. 
In fact, the wild inertia group 
$I^{\rm w}\subset \Gal(K^{\rm sep}/K)$ is a pro-$p$ group. 
By \cite[Exp. IX, Prop. 3.5.2]{SGA7}, the action of $I^{\rm w}$ on the Tate module $T_\ell(A)$ is trivial. In other words, any point of $A(K^{\rm sep})[\ell^\infty]$ is defined over the maximal tamely ramified extension of $K$. 
This finishes the proof of Proposition \ref{extension}.

\section{Reduction of the Tate conjecture} \label{section reduction}

The goal of this section is to prove Theorem \ref{reduction main}. The idea of the proof is sketched in \S \ref{section idea}.
In \S \ref{section prelim tate}, we introduce some preliminary results to be used later. 
In \S \ref{subsection reduction}, we prove Theorem \ref{reduction main}.

\subsection{Preliminary results}  
\label{section prelim tate}

The goal of this subsection is to review some basics of the BSD conjecture, and introduce its equivalence with the Tate conjecture as in the work of Artin--Tate. 
We also introduce a result about projective regular models of abelian varieties as a consequence of the work of Mumford, Faltings--Chai and Kunn\"emann.

\subsubsection*{The BSD conjecture}

The prestigious Birch and Swinnerton-Dyer Conjecture over global fields is as follows:

\begin{conj}[BSD Conjecture:$\, BSD(A)$] \label{Tate conjecture}
Let $A$ be an abelian variety over a global field $K$. Then 
$$\ord_{s=1}L(A,s)=\rank\, A(K). $$
\end{conj}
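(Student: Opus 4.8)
The statement above is the Birch and Swinnerton-Dyer conjecture in its original form, so no unconditional proof is available; I will describe the plan for the \emph{global function field} case, which is what the rest of this paper uses, and indicate precisely where the obstruction lies. Since both sides of the conjectured equality are isogeny invariants ($L(A,s)$ depends only on the isogeny class, and $\rank\, A(K)$ is unchanged under an isogeny because the kernel and cokernel of $A(K)\to A'(K)$ are finite), I may first replace $A$ by a convenient isogenous abelian variety: when $K=k(\PP^1_k)$, Theorem \ref{positivity main} lets me assume $A=B\times_K C_K$ with $C$ defined over $k$ and $\OO_B$ ample over $\PP^1_k$, and $BSD(C_K)$ is easy to check unconditionally, so the content is concentrated in $BSD(B)$.

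Next I would pass from the abelian variety to a surface. Using Mumford's and Faltings--Chai's construction, refined by K\"unnemann, one produces a projective, flat, regular model $\psi:\CP\to S$ of $B^\vee$ over the curve $S$; by the Artin--Tate theorem (see Theorem \ref{Brauer Sha}), $BSD(B)$ is equivalent to the Tate conjecture $T^1(\CP)$ for the surface $\CP$, and by the chain of equivalences due to Artin--Tate, Milne, Schneider, Bauer and Kato--Trihan recalled earlier, each of these is in turn equivalent to the finiteness of $\Sha(B)[\ell^\infty]$ for a single prime $\ell$ (with $\ell=p$ allowed). Both the rank equality $\ord_{s=1}L(B,s)=\rank\, B(K)$ and the refined leading-coefficient formula follow once this finiteness is established, so the entire conjecture in this setting is reduced to that one finiteness statement.

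The main obstacle is therefore exactly the Tate conjecture for the surface $\CP$, equivalently the finiteness of the $\ell$-primary part of $\Sha(B)$ --- precisely the open problem that the reduction of \S\ref{section reduction} is designed to attack. Theorem \ref{reduction main} shows it suffices to prove $T^1$ for surfaces $X$ with $H^1(X,\CO_X)=0$, and Theorem \ref{finiteness main} together with Corollary \ref{finiteness cor} disposes of the $F^\infty$-part of $\Sha$ in the relevant geometric situations; but the remaining (prime-to-$F$, or genuinely $\ell$-adic) part of the Tate conjecture for general surfaces is not resolved by these methods, so the conjecture as stated remains open. For number fields the situation is worse still: only the analytic-rank $0$ and $1$ cases are accessible, via the Gross--Zagier formula and Euler-system arguments, and no general approach is in sight.
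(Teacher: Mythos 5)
You were asked to prove a statement that the paper itself labels as a \emph{conjecture}, and you correctly recognized that no unconditional proof exists: the paper does not prove $BSD(A)$ and only uses it as the target of the Artin--Tate equivalence and the reductions in \S\ref{section reduction}. Your discussion of the surrounding reductions (isogeny invariance, Artin--Tate, K\"unnemann's models, the finiteness of $\Sha[\ell^\infty]$, and where the genuine obstruction lies) is accurate and consistent with how the paper actually deploys this conjecture, so there is nothing to correct.
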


Recall that the global L-function
$$
L(A,s)=\prod_v L_v(A,s)
$$
is the product over all non-archimedean places $v$ of $K$, where the local L-function
$$
L_v(A,s)=\det(1-q_v^{-s}\mathrm{Frob}(v)|V_\ell(A)^{I_v})^{-1}.
$$
Here $q_v$ is the order of the residue field of $v$, 
$\mathrm{Frob}(v)$ is a Frobenius element of $v$ in $\Gal(K^s/K)$, 
$I_v$ is the Inertia subgroup of $v$ in $\Gal(K^s/K)$, 
$\ell$ is any prime number different from the residue characteristic of $v$,
and
$V_\ell(A)$ is the $\ell$-adic Tate module of $A$.

In this paper, we are only interested in the case that $K$ is a global function field. 
In this case, $L(A,s)$ is known to be a rational function of $q^{-s}$, where $q$ is the order of the largest finite field contained in $K$. See \cite[VI, Example 13.6(a)]{Mil4} for example.
The abelian group $A(K)$ is finitely generated by the Lang--N\'eron theorem, as in \cite[Thm. 2.1]{Con}.
Moreover, in this case, we always know
$$\ord_{s=1}L(A,s)\geq \rank\, A(K)$$
by the works \cite{Tat3, Bau}, as a consequence by the comparison with the Tate conjecture which will be reviewed below. 

We will need the following easy results, which can be checked by treating both sides of the BSD conjecture. 

\begin{lem} \label{isogeny}
\begin{enumerate}[(1)]
\item Let $A$ and $B$ be isogenous abelian varieties over a global function field. Then the BSD conjecture holds for $A$ if and only if the BSD conjecture holds for $B$. 
\item Let $A$ and $B$ be any abelian varieties over a global function field. Then the BSD conjecture holds for $A\times B$ if and only if the BSD conjecture holds for both $A$ and $B$. 
\end{enumerate}
\end{lem}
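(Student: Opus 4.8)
The plan is to treat the two sides of the BSD conjecture separately and show that each transforms in a transparent way under isogenies and products; then the statement follows by comparing orders and ranks.

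For part (1), let $f\colon A\to B$ be an isogeny and $g\colon B\to A$ a dual isogeny with $g\circ f=[n]$ for some positive integer $n$. First I would note that $f$ induces an isomorphism $A(K)\otimes_\ZZ\QQ\to B(K)\otimes_\ZZ\QQ$, because $[n]$ becomes invertible after tensoring with $\QQ$; hence $\rank A(K)=\rank B(K)$. Next, for every prime $\ell$ different from the characteristic, $f$ induces an isomorphism $V_\ell(A)\to V_\ell(B)$ of $\QQ_\ell[\Gal(K^s/K)]$-modules (the induced map on Tate modules has cokernel killed by $n$). Since each local factor $L_v(A,s)$ depends only on the inertia invariants of $V_\ell(A)$ and the Frobenius action on them, we get $L_v(A,s)=L_v(B,s)$ for all $v$, hence $L(A,s)=L(B,s)$ and in particular $\ord_{s=1}L(A,s)=\ord_{s=1}L(B,s)$. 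Combining the two equalities proves the equivalence, and the argument is manifestly symmetric in $A$ and $B$.

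For part (2), I would use the canonical identifications $(A\times B)(K)=A(K)\times B(K)$ and $V_\ell(A\times B)=V_\ell(A)\oplus V_\ell(B)$ as Galois modules. The first yields $\rank(A\times B)(K)=\rank A(K)+\rank B(K)$; the second yields $L(A\times B,s)=L(A,s)\,L(B,s)$, hence $\ord_{s=1}L(A\times B,s)=\ord_{s=1}L(A,s)+\ord_{s=1}L(B,s)$. The implication from BSD for $A$ and $B$ to BSD for $A\times B$ is then immediate by adding the two equalities. For the converse I would invoke the unconditional inequality $\ord_{s=1}L(-,s)\ge\rank(-)(K)$ recalled above (from \cite{Tat3,Bau}): the orders dominate the ranks termwise, and if the two sums are equal then equality must already hold for $A$ and for $B$.

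I do not expect any real obstacle: parts (1) and (2) are pure bookkeeping with $\ell$-adic Tate modules and Mordell--Weil ranks. The only non-formal input is the cited inequality $\ord_{s=1}L(A,s)\ge\rank A(K)$, which is exactly what makes the converse direction of part (2) work; everything else is formal.
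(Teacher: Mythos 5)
The paper gives no written proof of this lemma beyond the remark ``which can be checked by treating both sides of the BSD conjecture,'' and your proposal is precisely the verification that remark has in mind: isogeny invariance of $\rank A(K)$ via the quasi-inverse isogeny after $\otimes\,\QQ$, isogeny invariance of $L(A,s)$ via the isomorphism $V_\ell(A)\to V_\ell(B)$ of Galois modules, additivity of both sides under products, and the unconditional inequality $\ord_{s=1}L(\,\cdot\,,s)\ge\rank(\,\cdot\,)(K)$ (which the paper records just above this lemma, citing Tate and Bauer) to turn equality of sums into termwise equality in the converse direction of part~(2). Your argument is correct and is essentially the paper's intended proof.
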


\subsubsection*{Tate conjecture vs BSD conjecture}

The bridge between the Tate conjecture and the BSD conjecture is via fibrations of surfaces. 
Recall that the Tate conjecture $T^1(X)$ (cf. Conjecture \ref{Tate conjecture}) for a projective and smooth surface $X$ over a finite field $k$ asserts that
 for any prime $\ell\neq p$, the cycle class map 
$$
\Pic(X)\otimes_\ZZ \QQ_p \lra H^2(X_{\bar k}, \QQ_\ell(1))^{\Gal(\bar k/k)}
$$
is surjective.

By a \emph{fibered surface} over a field $k$, we mean a projective and flat morphism $\pi:X\to S$, where $S$ is a projective and smooth curve over $k$ and $X$ is a projective and smooth surface over $k$, such that the generic fiber of $X\to S$ is smooth. 

Then we have the following beautiful result of Artin--Tate. 

\begin{thm}[Artin--Tate] \label{Brauer Sha}
Let $\pi:X\to S$ be a fibered surface over a finite field $k$. Denote by $J$ the Jacobian variety of the generic fiber of $\pi$. 
Then $T^1(X)$ is equivalent to $BSD(J)$. 
\end{thm}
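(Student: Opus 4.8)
The plan is to reduce both $T^1(X)$ and $BSD(J)$ to one and the same finiteness statement and then to link them through the geometry of the fibration $\pi\colon X\to S$. On the geometric side, by the theorem of Tate (contained in \cite{Tat3}), for a projective and smooth surface $X$ over a finite field $k$ the conjecture $T^1(X)$ is equivalent to the finiteness of the Brauer group $\Br(X)=H^2_\et(X,\GG_m)$; in fact it is enough that $\Br(X)[\ell^\infty]$ be finite for a single prime $\ell\neq p$. On the arithmetic side, as recalled in the introduction, the works of Artin--Tate \cite{Tat3}, Milne \cite{Mil3}, Schneider \cite{Sch}, Bauer \cite{Bau} and Kato--Trihan \cite{KT} show that $BSD(J)$ is equivalent to the finiteness of $\Sha(J)$, and also to the finiteness of $\Sha(J)[\ell^\infty]$ for a single prime $\ell$ (with $\ell=p$ allowed). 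So it suffices to prove that $\Br(X)$ is finite if and only if $\Sha(J)$ is finite.

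To build the bridge I would first record the vanishing $\Br(S)=0$: Tsen's theorem kills the Brauer group of the geometric function field, and the Brauer--Hasse--Noether exact sequence for the global function field $K=k(S)$ then identifies $\Br(S)$ with $\ker(\Br(K)\to\bigoplus_v\Br(K_v))=0$. Next I would run the Leray spectral sequence for $\GG_m$ along $\pi$. The sheaf $R^1\pi_*\GG_m$ coincides, away from the finitely many fibres that are not smooth and irreducible, with the relative Picard functor, and its restriction to $\eta=\Spec K$ recovers $\underline\Pic_{X_\eta/\eta}$, whose identity component is $J$; comparing $H^1(S,R^1\pi_*\GG_m)$ with $H^1(K,J)$ by means of the N\'eron model of $J$, and controlling the contribution of the reducible fibres (which is finitely generated, by the Shioda--Tate description of the N\'eron--Severi group), one is led to a canonical homomorphism $\Br(X)\to\Sha(J)$ with finite kernel and finite cokernel, after using $\Br(S)=0$ to discard the base contribution and the triviality of the stalks of $R^2\pi_*\GG_m$ at closed points of $S$ (Brauer groups of strictly henselian local rings and of finite fields vanish). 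Since only finiteness is at stake, the kernel and cokernel are harmless, so $\Br(X)$ is finite precisely when $\Sha(J)$ is.

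Putting the three steps together yields the chain
$$T^1(X)\ \Longleftrightarrow\ \Br(X)\text{ finite}\ \Longleftrightarrow\ \Sha(J)\text{ finite}\ \Longleftrightarrow\ BSD(J),$$
which is the assertion. The step requiring the most care — the real obstacle — is the middle one: the spectral-sequence analysis identifying $\Br(X)$ with $\Sha(J)$ up to finite groups. One must keep precise track of the local contributions at the places of bad reduction, where the N\'eron model of $J$ differs from the Picard scheme of a minimal regular model of $X$, and of the lattice spanned by the components of the reducible fibres, exactly as in the original argument of Artin--Tate \cite{Tat3} (see also Milne \cite{Mil3} and \cite{Mil4}). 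The genericity built into the notion of a fibered surface, together with $S$ being a smooth projective curve over a finite field, is precisely what keeps all of these corrections finite.
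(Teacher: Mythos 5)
The paper does not give a proof of this theorem: it cites Artin--Tate \cite[\S 4, (d)]{Tat3} (with \cite{Ulm} as exposition) directly and moves on. Your sketch reproduces the standard route taken in that cited source — reduce $T^1(X)$ to finiteness of $\Br(X)$, reduce $BSD(J)$ to finiteness of $\Sha(J)$, and compare the two via the Leray spectral sequence for $\GG_m$ along $\pi$ using $\Br(S)=0$ and control of the finite corrections at bad fibers — so this is essentially the same approach, merely spelled out where the paper defers to the literature.
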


This equivalence is a part of \cite[\S 4, (d)]{Tat3}, which actually treats equivalence of the refined forms of the conjectures.
See also \cite{Ulm} for a nice exposition of the theorem. 
For further results related to this equivalence, including results about the Tate--Shafarevich group and the Brauer group,
we refer to \cite{Tat3, Mil3, Bau, Sch, KT}.

For a projective and smooth surface $X$, to convert it to a fibered surface, one usually needs to blow-up $X$ along a smooth center. 
The following result asserts that this process does not change the Tate conjecture. 

\begin{lem} \label{birational}
Let $X'\to X$ be a birational morphism of projective and smooth surfaces over a finite field. Then $T^1(X)$ is equivalent to $T^1(X')$.
\end{lem}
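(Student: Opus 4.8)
The plan is to reduce to the case of a single blow-up at a closed point and then to compare Picard groups and $\ell$-adic cohomology directly.

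First I would invoke the classical factorization theorem for surfaces: any birational morphism $f\colon X'\to X$ of projective and smooth surfaces over a field is a composition of finitely many blow-ups at closed points (over a non-closed field this follows from the algebraically closed case by Galois descent). By induction on the number of blow-ups it therefore suffices to treat $X'=\mathrm{Bl}_x X$ for a single closed point $x\in X$, with residue field $k'=k(x)$ and exceptional divisor $E\cong\PP^1_{k'}$. Since $k$ is finite, $k'/k$ is separable, so $k'\otimes_k\bar k\cong\bar k^{\,[k':k]}$, and over $\bar k$ the divisor $E_{\bar k}$ is a disjoint union of $[k':k]$ lines, permuted transitively by $\Gal(\bar k/k)$.

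Next I would use the standard blow-up formulas. Pullback together with the class of $E$ identifies $\Pic(X')$ with $\Pic(X)\oplus\ZZ[E]$, and $f^*$ is (split) injective on $\Pic$. In \'etale cohomology, the blow-up formula --- verified after the faithfully flat base change $\Spec\bar k\to\Spec k$, where it reduces to blowing up finitely many rational points --- yields a $\Gal(\bar k/k)$-equivariant decomposition
$$
H^2(X'_{\bar k},\QQ_\ell(1))\ \cong\ f^*H^2(X_{\bar k},\QQ_\ell(1))\ \oplus\ N ,
$$
where $N$ is a $\QQ_\ell$-vector space with basis the classes of the exceptional lines over the geometric points above $x$, permuted by Galois exactly as those points are, and where $f^*$ is again split injective. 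Taking $\Gal(\bar k/k)$-invariants gives
$$
H^2(X'_{\bar k},\QQ_\ell(1))^{\Gal(\bar k/k)}\ \cong\ H^2(X_{\bar k},\QQ_\ell(1))^{\Gal(\bar k/k)}\ \oplus\ \QQ_\ell\cdot\mathrm{cl}(E) ,
$$
with $\mathrm{cl}(E)$ the class of the exceptional divisor $E$, which is defined over $k$.

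Finally I would observe that the cycle class map is functorial for $f^*$ and sends $[E]$ to $\mathrm{cl}(E)$, which is nonzero since $E$ has negative self-intersection. Hence the image of the cycle class map for $X'$ equals $f^*\bigl(\text{image of the cycle class map for }X\bigr)\oplus\QQ_\ell\cdot\mathrm{cl}(E)$ inside the decomposition above, so it is all of $H^2(X'_{\bar k},\QQ_\ell(1))^{\Gal(\bar k/k)}$ if and only if the cycle class map for $X$ is surjective (using injectivity of $f^*$). This proves $T^1(X')\Leftrightarrow T^1(X)$. The only genuinely delicate points are bookkeeping ones: the factorization of $f$ into blow-ups at closed points over a non-closed field, and the Galois-equivariance of the blow-up formula when $k(x)\neq k$; both are routine, handled via descent from $\bar k$ together with the perfectness of $k$.
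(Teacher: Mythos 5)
Your proof is correct and is exactly the direct comparison of both sides of the conjecture that the paper's one-line sketch gestures at: factor into point blow-ups, use the blow-up decompositions of $\Pic$ and of $H^2(\cdot,\QQ_\ell(1))$ with their Galois actions, and note that the cycle class map respects the splittings and hits the new summand via $\mathrm{cl}(E)$. The bookkeeping points you flag (factorization over a non-closed field, Galois equivariance of the blow-up formula when $k(x)\neq k$) are indeed routine for a finite base field, and your handling of them is fine.
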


This can be checked by directly describing the change of both sides of the conjectures.

\subsubsection*{Projective regular integral models of abelian varieties}

The following results asserts that we have well-behaved regular projective 
models of abelian varieties with semi-abelian reduction. 

\begin{thm} \label{model}
Let $S$ be a connected Dedekind scheme with generic point $\eta$, and let $A$ be an abelian variety over $\eta$ with semi-abelian reduction over 
$S$. Then there is a {projective, flat and regular} integral model 
$\psi:\CP\to S$ of $A$ over $S$ such that 
 there is a canonical $\CO_S$-linear isomorphism 
$$
R^1\psi_* \CO_\CP \lra \Lie(\CA^\vee/S).
$$
Here $\CA^\vee$ is the N\'eron model over $S$ of the dual abelian variety $A^\vee$ of $A$.
\end{thm}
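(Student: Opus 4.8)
The plan is to build $\CP$ from the compactification theory of Mumford and Faltings--Chai, with K\"unnemann's refinement providing projectivity, and then to identify $R^1\psi_*\CO_\CP$ by a Hodge-theoretic computation on the degenerating family. First I would reduce to a situation where the general machinery applies: after \'etale localization on $S$ the semi-abelian N\'eron model of $A^\vee$ extends, after adding a polarization (via Zarhin's trick, which does not affect the statement since both sides behave additively under products and the trace) and a full level-$N$ structure for some $N$ prime to the residue characteristics (legitimate after a finite \'etale cover of $S$, using that the Hodge bundle and $R^1\psi_*\CO$ can be recovered by Galois descent), one gets a classifying map $S\to \bar A_{g,N}$ to the toroidal compactification. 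Pulling back K\"unnemann's projective semi-abelian scheme compactification $\bar\CG\to \bar A_{g,N}$ of the universal semi-abelian variety gives a projective flat morphism $\psi:\CP\to S$; regularity of $\CP$ is ensured because the toroidal boundary charts are regular and the classifying map can be arranged to meet the boundary transversally (or one invokes that $\CP$ is already regular in K\"unnemann's construction over a regular base, which $S$ is). This $\CP$ is the desired integral model: its generic fiber is $A^\vee\cong A$ (using the principal polarization to identify $A$ with $A^\vee$, or simply taking $\CP$ to be a model of $A$ directly after this identification), and it is proper over the locus of good reduction where it literally equals the abelian scheme.

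The second and main part is the isomorphism $R^1\psi_*\CO_\CP \to \Lie(\CA^\vee/S)$. Over the open locus $U\subset S$ of good reduction this is the classical Hodge-theoretic fact $R^1\psi_*\CO_{\CA^\vee}\cong \Lie(\CA^{\vee\vee}/U)=\Lie(\CA/U)$ — wait, more carefully: for an abelian scheme $\CB\to U$ one has the Hodge filtration $0\to \pi_*\Omega^1_{\CB/U}\to H^1_{dR}(\CB/U)\to R^1\pi_*\CO_\CB\to 0$, and $R^1\pi_*\CO_\CB$ is canonically $\Lie(\CB^\vee/U)$; applying this to $\CB=\CA^\vee$ gives $R^1\psi_*\CO_\CP|_U \cong \Lie(\CA/U)$. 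Hmm, so I would rather state the target as $\Lie(\CA^\vee{}^\vee/S)$; but the theorem as written has $\Lie(\CA^\vee/S)$, so the model $\CP$ should be taken to be a model of $A$ itself (with $R^1$ then computing $\Lie$ of the dual), and indeed $\CP$ is a projective regular model of $A$, so $R^1\psi_*\CO_\CP|_U\cong \Lie((\CA_U)^\vee/U) = \Lie(\CA^\vee/S)|_U$, matching the claim. To extend the isomorphism across the bad-reduction points I would use the compactification: on the Faltings--Chai/K\"unnemann model the sheaf $\Omega^1_{\CP/S}(\log D/E_0)$ is locally free and one has $R^1\psi_*\CO_\CP \cong \Lie(\CG^\vee/S)$ where $\CG$ is the semi-abelian part, and $\Lie(\CG^\vee/S)=\Lie(\CA^\vee/S)$ since formation of $\Lie$ of the identity component commutes with the N\'eron model for semi-abelian reduction. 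Concretely I would argue: both sheaves are locally free of the same rank $g$ (for the left side, by cohomology-and-base-change together with the explicit semistable degeneration computation — the $H^1$ of a degenerate fiber, which is a compactified semi-abelian variety, still has dimension $g$), they agree on the dense open $U$, and the comparison map between them (coming from functoriality of $R^1$ under $\CA^\vee\hookrightarrow \CP$ over $U$ and then extended) is a map of vector bundles that is an isomorphism generically, hence injective; surjectivity then follows by checking it fiberwise at each bad point using the Faltings--Chai description of the boundary, where the relevant $H^1$ is computed by the log-de Rham complex.

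The hard part will be the fiberwise surjectivity at points of bad reduction — equivalently, showing $R^1\psi_*\CO_\CP$ has no extra "jump" in dimension there and that the canonical map hits all of $\Lie(\CA^\vee/S)$. This is exactly where one must invoke the precise structure of K\"unnemann's projective model: that the special fiber over a bad point is (a contraction of) a proper variety whose normalization is a torus-bundle over an abelian variety glued along boundary strata, with $h^1(\CO)$ still equal to $g$, and that the relative log-differentials realize the cotangent space correctly. I expect the bookkeeping of log structures and the compatibility of K\"unnemann's construction with the Faltings--Chai formulas to be the main technical burden; everything else (localization, Zarhin's trick, level structure, Galois descent, the good-reduction computation) is routine. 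I would also note that the canonical map is forced to be the one induced by $\psi_*$ applied to the natural maps on differentials exactly as in Lemma \ref{rossler}, so its construction is essentially already in hand from \S\ref{section partial}.
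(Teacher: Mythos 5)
Your proposal follows the same route as the paper: construct $\CP$ via K\"unnemann's projective regular models (pulled back from the toroidal compactification of the moduli space), reduce to the principally polarized case with Zarhin's trick, add a level structure by a finite extension of $S$, and obtain the isomorphism $R^1\psi_*\CO_\CP\cong\Lie(\CA^\vee/S)$ from \cite[Chap.\ VI, Thm.\ 1.1(iv)]{FC} combined with the constancy of $h^1$ of the degenerate fibers. The one place where you diverge from the paper is in how the isomorphism is transported to $S$: the paper establishes cohomological flatness (base-change compatibility) of FC's isomorphism over the compactified moduli space by the semicontinuity theorem and then simply pulls it back along the classifying map $S\to\bar{A}_{g,N}$, whereas you propose to construct a comparison map directly over $S$ and then prove it is an isomorphism by a rank count plus a fiberwise surjectivity check at the bad points; the paper's route is a bit more economical because the fiberwise verification is absorbed into the semicontinuity argument, but the underlying input from Faltings--Chai is identical. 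Your momentary wobble over whether $\CP$ models $A$ or $A^\vee$ resolves correctly, and your hand-wave about transversality for regularity is unnecessary (K\"unnemann's Theorem 4.2 gives regularity over a Dedekind base directly, as you note in the alternative).
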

\begin{proof}
This follows from the theory of degeneration of abelian varieties of Mumford \cite{Mum1} and Faltings--Chai \cite{FC}. 
In particular, by the exposition of K\"unnemann \cite{Kun}, the degeneration theory gives an explicit compactification of a semi-abelian scheme from a reasonable rational polyhedral cone decomposition. 
For the purpose of our theorem, choose $\CP$ to be the integral model constructed in \cite[Thm. 4.2]{Kun}. 
We claim that it automatically satisfies the property of the cohomology. 
Note that we have a canonical isomorphism $H^1(A, \CO_A)\to \Lie(A^\vee/\eta)$, as expressions of the tangent space of the Picard functor $\underline{\Pic}_{A/\eta}$. 
Then it remains to prove that this isomorphism extends to the integral version over $S$. 
This essentially follows from the special case $(s,a,b)=(1,1,0)$ of \cite[Chap. VI, Thm. 1.1(iv)]{FC}, which is proved in \S VI.2 of loc. cit..
We can check literally their proof works in our case. 
Alternatively, we introduce a different approach in the following. 

First, the truth of our isomorphism does not depend on the choice of the rational polyhedral cone decomposition, as mentioned at the beginning of page 209 in loc. cit..
Second, the isomorphism 
$R^1\bar f_* \CO_{\bar Y} \to \Lie(G/\bar X)$
of \cite[Chap. VI, Thm. 1.1(iv)]{FC} is compatible with base change by any morphism $Z\to \bar X$.
In other words, the map $\bar f$ is cohomologically flat in dimension 1.  
In fact, by the semi-continuity theorem, this holds if $h^1(Y_s, \CO_{Y_s})$ is constant in $s\in \bar X$, which can be seen from their proof. 
Once we have the cohomological flatness, our result holds if $A$ is principally polarized. 
In fact, take a level structure by extending $S$ if necessary, 
and then we have a map $S\to \bar X$ by the moduli property. 
Then the pull-back of $R^1\bar f_* \CO_{\bar Y} \to \Lie(G/\bar X)$ to $S$ gives the isomorphism we need.
Finally, if $A$ does not have a principal polarization, we can apply Zarhin's trick as in our treatment of Proposition \ref{extension}. 
\end{proof}

\subsection{Reduction of the Tate conjecture} \label{subsection reduction}

Now we prove Theorem \ref{reduction main}.
Let $X$ be a projective and smooth surface over $k$.
We will convert $T^1(X)$ to $T^1(\CY)$ for some projective and smooth surface $\CY$ over $k$ with $H^1(\CY, \CO_\CY)=0$.

\subsubsection*{Step 1: Make a fibration}
By Nguyen \cite{Ngu}, there is a Lefschetz pencil in $X$ over the finite field $k$.
This is a version over finite field of the existence of Lefschetz pencils in \cite[Exp. XVII, \S 3]{SGA7}. 
Blowing-up $X$ along the base locus of the Lefschetz pencil, we get a birational morphism $X'\to X$ and a fibered surface $\pi:X'\to S$ with $S=\PP^1_k$.
Here $X'$ is smooth over $k$ as the base locus is reduced.  
Denote by $J$ the Jacobian variety of the generic fiber of $\pi:X'\to S$, which is an abelian variety over $K=k(t)$. 

Since $\pi$ is semistable, $J$ has semi-abelian reduction over $S=\PP_k^1$.
In fact, by \cite[\S9.5, Thm. 4(b)]{BLR}, the Picard functor 
$\underline{\Pic}_{X'/S}^0$ is isomorphic to the relative identity component of the N\'eron model of $J$.
By \cite[\S9.2, Prop. 10]{BLR}, $\underline{\Pic}_{X'_s/s}^0$ is semi-abelian for any closed point $s\in S$.

By Lemma \ref{birational}, $T^1(X)$ is equivalent to $T^1(X')$. 
By Theorem \ref{Brauer Sha}, $T^1(X')$ is equivalent to $BSD(J)$. 

\subsubsection*{Step 2: Make the Hodge bundle positive}

We will prove that $BSD(J)$ is equivalent to $BSD(A)$ for an abelian variety $A$ over $K$ with everywhere semi-abelian reduction and with an ample Hodge bundle over $S$.

Apply Theorem \ref{positivity main} to $J$. 
Then $J$ is isogenous to 
$A\times_{K} C_{K}$, where $C$ is an abelian variety over $k$, and $A$ is an abelian variety over $K$ with an ample Hodge bundle over 
$S$.
Note that $A$ also has semi-abelian reduction by \cite[\S7.3, Cor. 7]{BLR}.  
By Lemma \ref{isogeny}, $BSD(J)$ is equivalent to the simultaneous truth of $BSD(A)$ and $BSD(C_K)$.

By \cite{Mil1}, $BSD(C_K)$ holds unconditionally. 
Alternatively, in the current case of $K=k(t)$, it is easy to prove that both sides of the BSD conjecture is 0. 
For the Mordell--Weil rank, we have
$$
C_K(K)=\Hom_S(S,C_S)=\Hom_k(S,C)=C(k)
$$
is finite. For the L-function, one can also have an explicit expression in terms of the eigenvalues of the Frobenius acting on the Tate module of 
$C$.

Therefore, $BSD(J)$ is equivalent to $BSD(A)$.

\subsubsection*{Step 3: Take projective regular model}

Let $\psi:\CP\to S$ be a projective, flat and regular integral model of $A^\vee$ over $S$ as in Theorem \ref{model}. 
In particular, we have a canonical isomorphism  
$$
R^1\psi_* \CO_\CP\lra \Lie(\CA/S).
$$
Here $\CA$ is the N\'eron model of $A$ over $S$.
Then the dual of  $R^1\psi_* \CO_\CP$ is isomorphic to the Hodge bundle of $A$, which is ample by construction. 

By the Leray spectral sequence for $\psi:\CP\to S$, we have an exact sequence
$$
0\lra H^1(S, \CO_S) \lra H^1(\CP, \CO_\CP) \lra 
H^0(S, R^1\psi_* \CO_\CP)\lra 0.
$$
The term $H^0(S, R^1\psi_* \CO_\CP)$ vanishes by the ampleness of the dual of 
$R^1\psi_* \CO_\CP$. Therefore, we end up with 
$H^1(\CP, \CO_\CP)=0.$

\subsubsection*{Step 4: Take a surface in the regular model}
Note that $\CP$ is a projective and smooth variety over $k$ with $H^1(\CP, \CO_\CP)=0$. 
We claim that there is a projective and smooth $k$-surface $\CY$ in $\CP$ satisfying the following conditions:
\begin{enumerate}[(1)]
\item $H^1(\CY, \CO_\CY)=0$.
\item The canonical map 
$H^1(\CP_\eta, \CO_{\CP_\eta})\to H^1(\CY_\eta, \CO_{\CY_\eta})$ is injective.
\item The generic fiber $\CY_\eta$ of $\CY\to S$ is smooth. 
\end{enumerate}
Here $\eta$ is the generic point of $S$.

This is a consequence of the Bertini-type theorem of Poonen \cite{Poo}. 
By induction on the codimension of $\CY$ in $\CP$, it suffices to prove that there is a smooth hyperplane section $\CY$ of $\CP$ satisfying (3), since (1) and (2) are automatic.
For example, (1) follows from the vanishing of $H^2(\CP,\CO(-\CY))$, which holds if $\CY$ is sufficiently ample. 
To achieve (3), it suffices to make the closed fiber $\CY_s$ smooth over $s$ for some closed point $s\in S$ such that $\CP_s$ is smooth. 
Take a very ample line bundle $\CL$ over $\CP$ such that $H^0(\CP,\CL)\to H^0(\CP_s,\CL_s)$ is surjective.
The complete linear series of $\CL$ defines a closed immersion $\CP\to \PP_k^N$. 
Denote $\Sigma_d=H^0(\PP_k^N, \CO_{\PP_k^N}(d))$, and denote by $\Sigma$ the disjoint union of $\Sigma_d$ for all $d\geq1$.
Denote $m=\dim \CP$. 
By Poonen \cite[Thm 1.1]{Poo}, we have the following results:
\begin{enumerate}[(a)]
\item The density of $f\in \Sigma$ such that $\div(f)\cap \CP$ is smooth over $k$ is $\zeta_{\CP}(m+1)^{-1}$. 
\item The density of $f\in \Sigma$ such that $\div(f)\cap \CP_s$ is smooth over $s$ is $\zeta_{\CP_s}(m)^{-1}$. 
\end{enumerate}
We claim that $\zeta_{\CP_s}(m)$ goes to 1 as $[k(s):s]$ goes to infinity. 
In fact, this is easily seen by the Riemann hypothesis proved by Weil. 
As a consequence, we can choose $s\in S$ such that  
$\zeta_{\CP}(m+1)^{-1}+\zeta_{\CP_s}(m)^{-1}>1$. 
Consequently, we can find $f\in \Sigma$ simultaneously satisfying (a) and (b).
Then $\CY=\div(f)\cap \CP$ satisfies condition (3).  
This proves the existence of $\CY$. 

Let $\CY$ be a surface in $\CP$ with the above properties. Denote by $B$ the Jacobian variety of $\CY_\eta$ over $\eta$. 
Consider the homomorphism 
$A\to B$ induced by the natural homomorphism 
$\underline{\Pic}_{\CP_\eta/\eta}\to \underline{\Pic}_{\CY_\eta/\eta}$.
The induced map between the Lie algebras is exactly the injection in (2). 
Therefore, the kernel of $A\to B$ is finite. 
It follows that $A$ is a direct factor of $B$ up to isogeny. 
By Lemma \ref{isogeny}, $BSD(A)$ is implied by
$BSD(B)$.  
By Theorem \ref{Brauer Sha}, $BSD(B)$ is equivalent to $T^1(\CY)$. 

In summary, $T^1(X)$ is implied by $T^1(\CY)$. 
By construction, $\CY$ is a projective and smooth surface over $k$ with 
$H^1(\CY, \CO_\CY)=0$.
This finishes the proof of Theorem \ref{reduction main}.

\end{document}